\documentclass[a4paper]{amsart}

\usepackage[backrefs,lite]{amsrefs}

\usepackage{amssymb}              
\usepackage{lmodern,bm}              
\usepackage{pifont}
\usepackage{mathtools}
\usepackage[all]{xy}
\usepackage{tikz}
\usetikzlibrary{calc} 

\usepackage{longtable}
\usepackage{array}
\usepackage{multicol}
\usepackage{multirow}
\usepackage{makecell}
\usepackage{booktabs}
\newcolumntype{C}{>{$}c<{$}}
\newcolumntype{L}{>{$}l<{$}}
\newcolumntype{R}{>{$}r<{$}}

\CompileMatrices

\setcounter{tocdepth}{1}

\newtheorem{theorem}{Theorem}[section]

\newtheorem{lemma}[theorem]{Lemma}
\newtheorem{proposition}[theorem]{Proposition}
\newtheorem{corollary}[theorem]{Corollary}

\theoremstyle{definition}

\newtheorem{definition}[theorem]{Definition}
\newtheorem{construction}[theorem]{Construction}

\newtheorem{example}[theorem]{Example}
\newtheorem{remark}[theorem]{Remark}
\newtheorem{reminder}[theorem]{Reminder}

\numberwithin{equation}{theorem}


\def\KK{{\mathbb K}}
\def\TT{{\mathbb T}}
\def\ZZ{{\mathbb Z}}

\def\QQ{{\mathbb Q}}
\def\PP{{\mathbb P}}

\def\KKK{\mathcal{K}}

\def\bangle#1{{\langle #1 \rangle}}

\def\quot{/\!\!/}

\def\reg{\mathrm{reg}}

\DeclareMathOperator{\Cl}{\mathrm{Cl}}
\DeclareMathOperator{\cone}{\mathrm{cone}}
\DeclareMathOperator{\conv}{\mathrm{conv}}

\DeclareMathOperator{\ddiv}{\mathrm{div}}
\DeclareMathOperator{\im}{\mathrm{im}}
\DeclareMathOperator{\lcm}{\mathrm{lcm}}

\DeclareMathOperator{\LP}{\mathrm{LP}}
\DeclareMathOperator{\rank}{\mathrm{rank}}

\DeclareMathOperator{\Spec}{\mathrm{Spec}}
\DeclareMathOperator{\supp}{\mathrm{supp}}
\DeclareMathOperator{\trop}{\mathrm{trop}}
\DeclareMathOperator{\WDiv}{\mathrm{WDiv}}

\title[Anticanonical complex for toric complete intersections]{The anticanonical complex for non-degenerate toric complete intersections}

\author[J\"urgen Hausen, Christian Mauz, Milena Wrobel]{J\"urgen Hausen, Christian Mauz, Milena Wrobel}

\address{Mathematisches Institut, Universit\"at T\"ubingen,
Auf der Morgenstelle 10, 72076 T\"ubingen, Germany}
\email{juergen.hausen@uni-tuebingen.de}

\address{Mathematisches Institut, Universit\"at T\"ubingen,
Auf der Morgenstelle 10, 72076 T\"ubingen, Germany}
\email{mauz@math.uni-tuebingen.de}

\address{Institut f\"ur Mathematik, Universit\"at Oldenburg,
26111 Oldenburg, Germany}
\email{milena.wrobel@uni-oldenburg.de}

\subjclass[2010]{14M25,14J45}

\sloppy

\begin{document}

\begin{abstract}
The anticanonical complex generalizes
the Fano polytope from toric geometry
and has been used to study Fano varieties
with torus action so far.
We work out the case of complete intersections 
in toric varieties defined by non-degenerate
systems of Laurent polynomials.
As an application, we classify the terminal
Fano threefolds that are embedded into a fake
weighted projective space via a general 
system of Laurent polynomials.
\end{abstract}

\maketitle

\section{Introduction}

The idea behind anticanonical complexes is to
extend the features of the Fano polytopes
from toric geometry to wider classes of
varieties and thereby to provide combinatorial
tools for the treatment of the singularities
of the minimal model programme.
If $X$ is any $\QQ$-Gorenstein variety,
i.e.~some positive multiple of a canonical
divisor $K_X$ is Cartier,
then these singularities are defined 
in terms of \emph{discrepancies} that 
means the coefficients $a(E)$ of the 
exceptional divisors $E$ showing up
in the ramification formula for a
resolution $\pi \colon X' \to X$ of 
singularities: 
$$
K_{X'}
\ = \ 
\pi^* K_{X} + \sum a(E)E.
$$
The variety~$X$ has at most
\emph{terminal, canonical} or 
\emph{log terminal}
singularities if always $a(E) > 0$,
$a(E) \ge 0$ or $a(E) > -1$.
We briefly look at the toric
case.
For an $n$-dimensional toric Fano variety~$Z$,
one defines the \emph{Fano polytope} to be 
the convex hull $A \subseteq \QQ^n$
over the primitive ray generators of 
the describing fan of~$Z$.
For any toric resolution 
$\pi \colon Z' \to Z$ 
of singularities, the exceptional 
divisors~$E_\varrho$ are given 
by rays of the fan of~$Z'$
and one obtains the discrepancies 
as
$$ 
a(E_\varrho) 
\ = \ 
\frac{\Vert v_\varrho \Vert}{\Vert v_\varrho'  \Vert} - 1,
$$
where $v_\varrho \in \varrho$ is the shortest
non-zero lattice vector and $v_\varrho' \in \varrho$ 
is the intersection point of $\varrho$ and 
the boundary $\partial A$.  
In particular, a toric Fano variety~$Z$
is always log terminal and $Z$ has at most
terminal (canonical) singularities
if and only if its corresponding Fano polytope $A$
contains no lattice points except the origin
and its vertices (no lattice points in its
interior except the origin).
This allows the use of lattice polytope
methods in the study of singular toric
Fano varieties; see~\cite{BoBo,Ka1,Ka2}
for work in this direction.

This principle has been extended by 
replacing the Fano polytope with a 
suitable polyhedral complex, named
\emph{anticanonical complex} in the 
setting of varieties with a torus action 
of complexity one, which encodes 
discrepancies in full analogy to the 
toric Fano polytope; see~\cite{BeHaHuNi}.
The more recent work~\cite{HiWr1}
provides an existence result of 
anticanonical complexes for torus actions 
of higher complexity
subject to conditions on a rational quotient.
Applications to the study of singularities 
and Fano varieties can be found 
in~\cite{ArBrHaWr,BrHae,HiWr2}. 

In the present article, we provide 
an anticanonical complex 
for subvarieties of toric varieties 
arising from non-degenerate systems of 
Laurent polynomials in the sense of
Khovanskii~\cite{Kh}; see also 
Definition~\ref{def:laurent-system}.
Even in the hypersurface case, the
subvarieties obtained
this way form an interesting example
class of varieties which is actively
studied by several authors;
see for instance~\cite{Ba1,Dol,Ish}.

We briefly indicate the setting;
see Section~\ref{sec:laurent-systems} 
for the details.
Let $F = (f_1, \ldots, f_s)$ 
be a non-degenerate system of Laurent 
polynomials in $n$ variables and let 
$\Sigma$ be any fan in $\ZZ^n$ refining
the normal fan of the Minkowski sum
$B_1 + \ldots + B_s$ of the Newton 
polytopes $B_j$ of $f_j$.
Moreover, denote by $Z$ the toric variety 
associated with $\Sigma$.
We are interested in the 
\emph{non-degenerate toric complete intersection}
defined by $F$ and~$\Sigma$, that means
the variety
$$
X \ = \ X_1 \cap \ldots \cap X_s \ \subseteq Z,
$$
where $X_i \subseteq Z$ is the closure of
$V(f_i) \subseteq \TT^n$.
By Theorem~\ref{thm:nonde2ci}, the variety
$X \subseteq Z$ is a locally complete intersection,
equals the closure of $V(F) \subseteq \TT^n$
and, in the Cox ring of $Z$, the defining
homogeneous equations of $X$ generate a
complete intersection ideal.
Theorem~\ref{thm:nondeg2trop} shows that the
union $Z_X \subseteq Z$ of all torus orbits
intersecting $X$ is open in $Z$ and thus the
corresponding cones form a subfan
$\Sigma_X \subseteq \Sigma$.
Moreover, the support of $\Sigma_X$ equals
the tropical variety of $V(F) \subseteq \TT^n$.

We come to the first main result of the article.
Suppose that $Z_X$ is $\QQ$-Gorenstein.
Then, for every $\sigma \in \Sigma_X$, we have 
a linear form $u_\sigma \in \QQ^n$ evaluating 
to $-1$ on every primitive ray generator
$v_\varrho$, where $\varrho$ is an extremal
ray of $\sigma$.
We set   
$$ 
A(\sigma) 
\ := \ 
\{v \in \sigma; \ 0 \ge \bangle{u_\sigma,v} \ge -1\}
\ \subseteq \
\sigma.
$$

\goodbreak

\begin{theorem}
\label{thm:main-1}
Let $X \subseteq Z$ be an irreducible
non-degenerate toric complete intersection.
Then $X \subseteq Z$
admits ambient toric resolutions.
Moreover, if $Z_X$ is $\QQ$-Gorenstein,
then~$X$ is so and $X$ has an anticanonical
complex
$$
\mathcal{A}_X
\ = \
\bigcup_{\sigma \in \Sigma_X} A(\sigma).
$$
That means that for all ambient toric 
modifications $Z' \to Z$ the discrepancy
of any exceptional divisor $E_{X'} \subseteq X'$ 
is given in terms of the defining ray
$\varrho \in \Sigma'$ of its host 
$E_{Z'} \subseteq Z'$,
the primitive generator $v_\varrho \in \varrho$
and the intersection point $v_\varrho'$
of $\varrho$ and the boundary
$\partial \mathcal{A}_X$ as
$$ 
a (E_{X'}) 
\ = \ 
\frac{\Vert v_\varrho \Vert}{\Vert v_\varrho' \Vert} -1.
$$
\end{theorem}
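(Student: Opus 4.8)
The plan is to reduce the discrepancy computation on $X$ to the toric ramification formula on the ambient modification, via a double application of adjunction. The decisive point will be that the defining hypersurfaces $X_i$ pull back \emph{without} acquiring exceptional components, so that the discrepancy of $E_{X'}$ equals that of its host $E_{Z'}$.

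First I would fix a regular refinement $\Sigma'$ of $\Sigma$, giving a toric resolution $\pi \colon Z' \to Z$. Since $\Sigma'$ still refines the normal fan of $B_1 + \ldots + B_s$ and $F$ remains non-degenerate, Theorem~\ref{thm:nonde2ci} applies to $\Sigma'$: the variety $X' := \overline{V(F)} \subseteq Z'$ is a locally complete intersection cut out by the proper transforms $X_i' = \overline{V(f_i)}$. On the smooth locus of $Z'$, non-degeneracy forces $X'$ to be smooth, and by Theorem~\ref{thm:nondeg2trop} only the cones of $\Sigma'_X$ carry orbits meeting $X'$; hence $X' \to X$ is an ambient toric resolution, which settles the first assertion and, after subdivision, covers an arbitrary ambient modification.

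The technical heart is the divisorial identity $\pi^* X_i = X_i'$. Since $\Sigma$ refines the normal fan of $B_i$, the support function $\psi_{B_i}(v) = \min_{m \in B_i}\langle m, v\rangle$ is linear on every cone of $\Sigma$. Writing the principal divisor as $\operatorname{div}_Z(f_i) = X_i + \sum_{\varrho} \psi_{B_i}(v_\varrho) D_\varrho$, the boundary part is Cartier with support function $-\psi_{B_i}$; as this function is linear on the cones of $\Sigma$, its pullback to $Z'$ again has support function $-\psi_{B_i}$, and thus coefficient $\psi_{B_i}(v_{\varrho'})$ along every ray $\varrho'$ of $\Sigma'$. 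Comparing with $\operatorname{div}_{Z'}(f_i) = X_i' + \sum_{\varrho'} \psi_{B_i}(v_{\varrho'}) D_{\varrho'}$, and using that $\pi^*\operatorname{div}_Z(f_i) = \operatorname{div}_{Z'}(f_i)$, the boundary terms cancel and yield $\pi^* X_i = X_i'$ with no exceptional multiplicity. I would stress that this cancellation is exactly where the normal-fan-refinement hypothesis is indispensable; without it the $X_i$ would vanish along the exceptional divisors.

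With this in hand, the discrepancies follow by adjunction. On $Z_X$ the divisor $K_Z$ is $\QQ$-Cartier and each $X_i$ is Cartier, so $K_X = (K_Z + X_1 + \ldots + X_s)|_X$; restricting on $Z_X$ shows $K_X$ is $\QQ$-Cartier, so $X$ is $\QQ$-Gorenstein. For an exceptional ray $\varrho \in \sigma \in \Sigma_X$ with host $E_{Z'} = D_\varrho$ and induced divisor $E_{X'} = X' \cap D_\varrho$, I combine the toric ramification formula $\pi^* K_Z = K_{Z'} - \sum_\varrho a(E_{Z'}) E_{Z'}$ with $\pi^* X_i = X_i'$ and adjunction on $Z'$: restricting $\pi^*(K_Z + \sum_i X_i)$ to $X'$ produces $\pi_X^* K_X = K_{X'} - \sum_\varrho a(E_{Z'}) E_{X'}$, whence $a(E_{X'}) = a(E_{Z'})$. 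Finally the toric discrepancy is $a(E_{Z'}) = -\langle u_\sigma, v_\varrho\rangle - 1 = \Vert v_\varrho\Vert / \Vert v_\varrho'\Vert - 1$, where $v_\varrho'$ meets the hyperplane $\langle u_\sigma, \cdot\rangle = -1$; within $\sigma$ this is precisely the outer boundary of $A(\sigma)$, so $v_\varrho'$ lies on $\partial\mathcal{A}_X$ and the formula transfers verbatim to $X$.

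The main obstacle I anticipate is justifying that $E_{X'} = X' \cap D_\varrho$ is the correct prime exceptional divisor whose discrepancy literally matches the ambient one, i.e.\ that restricting the divisorial valuation and the adjunction formula to $X'$ is compatible. This rests on the transversality and irreducibility of the orbit intersections supplied by non-degeneracy together with the subfan structure $\Sigma_X$ of Theorem~\ref{thm:nondeg2trop}; the identity $\pi^* X_i = X_i'$ is what guarantees $X'$ is not a component of any $D_\varrho$, and the local complete intersection property of Theorem~\ref{thm:nonde2ci} is what makes adjunction applicable throughout.
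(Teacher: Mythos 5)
Your proposal is correct, and it reaches the paper's decisive reduction $a(E_{X'}) = a(E_{Z'})$ by a genuinely different key step. Where you establish the global identity $\pi^* X_i = X_i'$ by writing $\ddiv_Z(f_i) = X_i + \sum_\varrho \psi_{B_i}(v_\varrho) D_\varrho$ and exploiting that the support function $\psi_{B_i}$ is linear on every cone of $\Sigma$ (so the boundary part pulls back to the boundary part and the principal divisors cancel), the paper argues chart by chart: its Proposition~\ref{prop:can-restr} picks, for each $\sigma \in \Sigma_X$, a vertex $u = u_1 + \ldots + u_s$ of the Newton polytope with $\sigma \subseteq \sigma(u)$ and replaces each push-forward $C_j$ of $\ddiv(f_j)$ by the linearly equivalent toric divisor $D(\sigma,j)$ whose support misses $Z_\sigma$ (via base point freeness, Propositions~\ref{prop:basepointfree} and~\ref{prop:normfanbpfample}), producing canonical divisors $K_X(\sigma)$, $K_{X'}(\sigma)$ whose difference over $\pi^{-1}(Z_\sigma)$ is exactly the restricted toric discrepancy divisor. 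These are two implementations of one underlying fact --- a vertex $u$ with $\sigma \subseteq \sigma(u)$ is precisely a point where $\psi_{B_i}$ restricts to a linear form on $\sigma$ --- but your version is global and exhibits the absence of exceptional multiplicities in $\pi^* X_i$ in a single computation, at the price of having to know that pulling back and restricting $\QQ$-Cartier divisors commute on the singular ambient spaces (which you correctly flag, and which holds here since everything can be checked over $Z_\reg$, where $X$ is smooth by Theorem~\ref{thm:nondegls2smooth}); the paper's local representatives sidestep this by never pulling back a divisor supported near the chart under consideration. From that point on the proofs coincide: both use the purity statement of Theorem~\ref{thm:nondeg2trop}~(i) to give each exceptional prime $E_{X'}$ a unique host $E_{Z'}$, and the multiplicity-one restriction of Remark~\ref{rem:tci-weilpull} to transfer the coefficient; your identity $a(E_{Z'}) = -\bangle{u_\sigma, v_\varrho} - 1 = \Vert v_\varrho \Vert / \Vert v_\varrho' \Vert - 1$, with $v_\varrho'$ on the hyperplane $\bangle{u_\sigma,\cdot} = -1$ bounding $A(\sigma)$, is exactly the paper's opening observation that $\mathcal{A}_X$ is an anticanonical complex for $Z_X$, and your reduction of arbitrary ambient modifications to a dominating resolution via valuative invariance of discrepancies is how the general statement is obtained there as well.
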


Observe that in the above setting, each vertex 
of $\mathcal{A}_X$ is a 
primitive ray generator of the fan $\Sigma$. 
Thus, in the non-degenerate complete toric intersection  
case, all vertices of the anticanonical complex 
are integral vectors;
this does definitely not hold in other situations, 
see~\cite{BeHaHuNi,HiWr1}.
The following consequence of Theorem~\ref{thm:main-1}
yields in particular Bertini type statements on
terminal and canonical singularities.

\begin{corollary}
\label{cor:acc-singu}
Consider a subvariety $X \subseteq Z$ as in Theorem~\ref{thm:main-1}
and the associated anticanonical complex~$\mathcal{A}_X$.
\begin{enumerate}
\item
$X$ has at most log-terminal singularities.
\item
$X$ has at most terminal singularities 
if and only if $\mathcal{A}_X$ contains no lattice
points except the origin and its vertices.
\item
$X$ has at most canonical singularities 
if and only if $\mathcal{A}_X$ contains no 
interior lattice points except the origin.
\end{enumerate}
Moreover, $X$ has at most terminal (canonical)
singularities if and only if its ambient 
toric variety $Z_X$ has at most terminal (canonical)
singularities.
\end{corollary}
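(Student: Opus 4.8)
The plan is to translate the discrepancy formula of Theorem~\ref{thm:main-1} into a statement about the position of primitive ray generators relative to $\mathcal{A}_X$, and then to read off the three singularity types combinatorially. Fix $\sigma \in \Sigma_X$ and a ray $\varrho \subseteq \sigma$ with primitive generator $v_\varrho$. Since $v_\varrho'$ lies on $\varrho$ and on the outer wall of $A(\sigma)$, we may write $v_\varrho' = \lambda v_\varrho$ with $\lambda > 0$ and $\langle u_\sigma, v_\varrho'\rangle = -1$; as $\langle u_\sigma, w\rangle < 0$ for every nonzero $w \in \sigma$, this gives $\|v_\varrho\|/\|v_\varrho'\| = 1/\lambda = -\langle u_\sigma, v_\varrho\rangle$ and hence, by Theorem~\ref{thm:main-1}, $a(E_{X'}) = -\langle u_\sigma, v_\varrho\rangle - 1$. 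In particular $a(E_{X'}) > -1$ always, which proves~(i); moreover $a(E_{X'})$ is positive, zero, or lies in $(-1,0)$ according to whether $v_\varrho$ lies strictly outside $\mathcal{A}_X$, on $\partial\mathcal{A}_X$, or in the interior of $\mathcal{A}_X$.

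Next I would reduce the verification of terminality and canonicity to the toric exceptional divisors governed by this formula. By Theorem~\ref{thm:main-1} the variety $X$ admits an ambient toric resolution $Z' \to Z$; its proper transform $X' \to X$ is a resolution whose exceptional divisors are the restrictions $E_{Z'} \cap X'$ and form a simple normal crossing configuration. Recalling the standard fact that terminality (canonicity) may be tested on a single log resolution --- blowing up smooth centres inside a smooth SNC configuration only increases discrepancies --- it suffices to control the discrepancies $a(E_{X'})$ on this one resolution. Conversely, Theorem~\ref{thm:main-1} grants that the same formula holds for every ambient toric modification, so any lattice point of $|\Sigma_X|$ may be realised as an exceptional toric divisor by a stellar subdivision, with discrepancy computed as in the first paragraph.

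With this in place, (ii) and (iii) follow by inspecting lattice points. The vertices of $\mathcal{A}_X$ are precisely the primitive generators of the extremal rays of the cones of $\Sigma_X$, that is the original, non-exceptional rays, while the origin is no ray at all. Any other lattice point $w \neq 0$ of $\mathcal{A}_X$ spans a ray outside $\Sigma_X$; subdividing at $w$ produces an exceptional divisor with $a = -\langle u_\sigma, w\rangle - 1 \le 0$, which is negative exactly when $w$ is an interior point. Hence terminality fails iff $\mathcal{A}_X$ carries a lattice point other than the origin and the vertices, and canonicity fails iff $\mathcal{A}_X$ carries an interior lattice point other than the origin. For the reverse implications, if no such points exist then every exceptional $v_\varrho$ on the fixed resolution lies outside $\mathcal{A}_X$ (respectively outside its interior), so all $a(E_{X'}) > 0$ (respectively $\ge 0$), whence $X$ is terminal (canonical) by the single-resolution criterion.

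For the final assertion I would observe that $A(\sigma)$, and therefore $\mathcal{A}_X = \bigcup_{\sigma \in \Sigma_X} A(\sigma)$, depends only on the cones of $\Sigma_X$ and the $\QQ$-Gorenstein data $u_\sigma$; it is literally the toric anticanonical complex of $Z_X$. The classical toric criteria characterise $Z_X$ as terminal (canonical) by exactly the lattice-point conditions appearing in (ii) and (iii), so $X$ and $Z_X$ share their terminal and canonical status. The main obstacle is the reduction in the second paragraph: one must know that no non-toric divisor over $X$ can have a smaller discrepancy than the toric ones, which I would secure through the existence of an ambient toric log resolution provided by Theorem~\ref{thm:main-1} together with the standard single-resolution test, and one must check that stellar subdivision at an arbitrary interior lattice point really yields a nonempty, codimension-one exceptional divisor of the proper transform --- this is where the non-degeneracy and complete-intersection hypotheses are genuinely used.
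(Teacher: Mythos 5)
Your argument is correct and coincides with the proof the paper leaves implicit: Corollary~\ref{cor:acc-singu} is stated as an immediate consequence of Theorem~\ref{thm:main-1} in the anticanonical-complex formalism of \cite{BeHaHuNi}, and the intended reasoning is exactly your chain — rewriting the discrepancy as $a(E_{X'}) = -\bangle{u_\sigma, v_\varrho} - 1$ (giving log terminality at once), realizing any lattice point of $\mathcal{A}_X$ other than the origin and the vertices as a new ray whose exceptional divisor has nonempty, codimension-one trace on $X'$ (this is where Theorem~\ref{thm:nondeg2trop}~(i) enters, as you note), reducing the converse directions to a single log resolution, and observing that $\mathcal{A}_X$ is simultaneously the toric anticanonical complex of $Z_X$ with integral vertices, which yields the final equivalence. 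The one step you assert rather than fully justify — that the exceptional configuration on the proper transform $X'$ in a smooth ambient toric resolution is simple normal crossing, needed for the single-resolution test — is indeed available from the paper's toolkit, since Proposition~\ref{prop:differential} and Theorem~\ref{thm:nondeg2trop}~(i) applied to the regular refinement give transversal intersection of $X'$ with all orbit closures (Khovanskii's resolution of singularities), so your reduction goes through as sketched.
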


As an application of the first main result,
we classify the general non-toric terminal Fano
non-degenerate complete intersection
threefolds sitting in fake weighted
projective spaces; for the meaning of
``general'' in this context, see
Definition~\ref{def:general}.
According to~\cite{Kh}, the general toric
complete intersection is non-degenerate.
Moreover, under suitable assumptions
on the ambient toric variety, we
obtain the divisor class group and
the Cox ring for free in the general
case; see Corollary~\ref{cor:general2SrRa}.
This, by the way, allows us to construct
many Mori dream spaces with prescribed
properties; see for instance
Example~\ref{ex:gentype}. 

We turn to the second main result.
Recall that a fake weighted projective space
is an $n$-dimensional toric variety
arising from a complete fan with $n+1$ rays.
Any fake weighted projective space~$Z$ is
uniquely determined up to isomorphism
by its degree matrix $Q$, having as its
columns the divisor classes $[D_i] \in \Cl(Z)$
of the toric prime divisors $D_1, \ldots, D_{n+1}$
of~$Z$.

\goodbreak

\begin{theorem}
\label{thm:terminalWHf-new}
Any non-toric terminal Fano general complete
intersection threefold $X = X_1 \cap \ldots \cap X_s$
in a fake weighted projective space $Z$ is a member of
precisely one of the following families,
specified by the generator degree matrix~$Q$
and the relation degree matrix $\mu$ with respect to
the $\Cl(Z)$-grading.
We also list the anticanonical class
$-\mathcal{K}$ of $X$ and the numbers
$-\mathcal{K}^3$ and $h^0(-\mathcal{K})$.

\begin{center}
\newcounter{terminalWHfNo}
\begin{longtable}{>{\refstepcounter{terminalWHfNo}\theterminalWHfNo }LCCCCCC}
\toprule
\multicolumn{1}{c}{No.}
&
\Cl(Z)
& Q
& \mu
& -\mathcal{K}
& -\mathcal{K}^3
& h^0(-\KKK)
\\
\midrule
& \multirow{3}{*}{$\ZZ$} & \multirow{3}{*}{$
	{\small \setlength{\arraycolsep}{3pt}\begin{bmatrix}
	1 & 1 & 1 & 1 & 1
	\end{bmatrix}} $}
    & 2 & 3 & 54 & 30 \\
& & & 3 & 2 & 24 & 15 \\
& & & 4 & 1 &  4 &  5 \\ \midrule

& \ZZ \times \ZZ / 3 \ZZ &
	{\small \setlength{\arraycolsep}{3pt}\begin{bmatrix}
	1 & 1 & 1 & 1 & 1 \\
	\bar{0} & \bar{0} & \bar{1} & \bar{1} & \bar{2} 
	\end{bmatrix}} 
	& {\small \setlength{\arraycolsep}{3pt}
          \begin{bmatrix}
            3 \\
            \bar{0} \\
          \end{bmatrix}} 
	& {\small \setlength{\arraycolsep}{3pt}
          \begin{pmatrix}
            2 \\
            \bar{1} \\
          \end{pmatrix}} &
	 8 & 5 \\ \midrule

& \ZZ & {\small \setlength{\arraycolsep}{3pt}\begin{bmatrix}
	1 & 1 & 1 & 1 & 2
	\end{bmatrix}} 
	& 4 & 2 & 16 & 11 \\ \midrule

& \ZZ \times \ZZ / 2 \ZZ &
	{\small \setlength{\arraycolsep}{3pt}\begin{bmatrix}
	1 & 1 & 1 & 1 & 2 \\
	\bar{0} & \bar{0} & \bar{1} & \bar{1} & \bar{1}
	\end{bmatrix}} 
	& {\small \setlength{\arraycolsep}{3pt}
          \begin{bmatrix}
            4 \\
            \bar{0} \\
          \end{bmatrix}}
	& {\small \setlength{\arraycolsep}{3pt}
          \begin{pmatrix}
            2 \\
            \bar{1} \\
          \end{pmatrix}} 
	 & 8 & 5 \\ \midrule

& \multirow{2}{*}{$\ZZ$} & \multirow{2}{*}{$
	{\small \setlength{\arraycolsep}{3pt}\begin{bmatrix}
	1 & 1 & 1 & 2 & 2
	\end{bmatrix}} $}
    & 4 & 3 & 27 & 16 \\
& & & 6 & 1 & 3/2 & 3 \\ \midrule

& \ZZ \times \ZZ / 2 \ZZ &
	{\small \setlength{\arraycolsep}{3pt}\begin{bmatrix}
	1 & 1 & 1 & 2 & 2 \\
	\bar{0} & \bar{0} & \bar{1} & \bar{1} & \bar{1}
	\end{bmatrix}} 
	& {\small \setlength{\arraycolsep}{3pt}
          \begin{bmatrix}
            4 \\
            \bar{0} \\
          \end{bmatrix}}
	& {\small \setlength{\arraycolsep}{3pt}
          \begin{pmatrix}
            3 \\
            \bar{1} \\
          \end{pmatrix}} 
        &  27/2 & 8 \\ \midrule

& \ZZ \times \ZZ / 3 \ZZ &
	{\small \setlength{\arraycolsep}{3pt}\begin{bmatrix}
	1 & 1 & 1 & 2 & 2 \\
	\bar{0} & \bar{1} & \bar{2} & \bar{0} & \bar{1}
	\end{bmatrix}} 
      &
      {\small \setlength{\arraycolsep}{3pt}
          \begin{bmatrix}
            6 \\
            \bar{0} \\
          \end{bmatrix}} 
	& {\small \setlength{\arraycolsep}{3pt}
          \begin{pmatrix}
            1 \\
            \bar{1} \\
          \end{pmatrix}} 
        & 1/2 & 1 \\ \midrule

& \ZZ & {\small \setlength{\arraycolsep}{3pt}\begin{bmatrix}
	1 & 1 & 1 & 1 & 3
	\end{bmatrix}} 
	& 6 & 1 & 2 & 4 \\ \midrule

& \ZZ & {\small \setlength{\arraycolsep}{3pt}\begin{bmatrix}
	1 & 1 & 1 & 2 & 3
	\end{bmatrix}} 
	& 6 & 2 & 8 & 7 \\ \midrule

& \ZZ & {\small \setlength{\arraycolsep}{3pt}\begin{bmatrix}
	1 & 1 & 2 & 2 & 3
	\end{bmatrix}} 
	& 6 & 3 & 27/2 & 9 \\ \midrule

& \ZZ & {\small \setlength{\arraycolsep}{3pt}\begin{bmatrix}
	1 & 1 & 2 & 3 & 3
	\end{bmatrix}} 
	& 6 & 4 & 64/3 & 13 \\ \midrule

& \ZZ & {\small \setlength{\arraycolsep}{3pt}\begin{bmatrix}
	1 & 2 & 2 & 3 & 3
	\end{bmatrix}} 
	& 6 & 5 & 125/6 & 12 \\ \midrule

& \ZZ & {\small \setlength{\arraycolsep}{3pt}\begin{bmatrix}
	1 & 1 & 1 & 2 & 4
	\end{bmatrix}} 
	& 8 & 1 & 1 & 3 \\ \midrule

        & \ZZ \times \ZZ / 2 \ZZ
        &
        {\small \setlength{\arraycolsep}{3pt}
          \begin{bmatrix}
            1 & 1 & 1 & 2 & 4 \\
            \bar{0} & \bar{0} & \bar{1} & \bar{1} & \bar{1} \\
          \end{bmatrix}} 
        &
        {\small \setlength{\arraycolsep}{3pt}
          \begin{bmatrix}
            8 \\
            \bar{0} \\
          \end{bmatrix}}
	& {\small \setlength{\arraycolsep}{3pt}
          \begin{pmatrix}
            1 \\
            \bar{1} \\
          \end{pmatrix}} 
      & 1/2 & 1 \\ \midrule

& \ZZ & {\small \setlength{\arraycolsep}{3pt}\begin{bmatrix}
	1 & 2 & 3 & 3 & 4
	\end{bmatrix}} 
	& 12 & 1 &  1/6 & 1 \\ \midrule

& \ZZ & {\small \setlength{\arraycolsep}{3pt}\begin{bmatrix}
	1 & 1 & 3 & 4 & 4
	\end{bmatrix}} 
	& 12 & 1 & 1/4 & 2 \\ \midrule

& \ZZ & {\small \setlength{\arraycolsep}{3pt}\begin{bmatrix}
	1 & 1 & 2 & 2 & 5
	\end{bmatrix}} 
	& 10 & 1 & 1/2 & 2 \\ \midrule

& \ZZ & {\small \setlength{\arraycolsep}{3pt}\begin{bmatrix}
	1 & 1 & 2 & 3 & 6
	\end{bmatrix}} 
	& 12 & 1 & 1/3 & 2 \\ \midrule

        & \ZZ \times \ZZ / 2 \ZZ
        &
        {\small \setlength{\arraycolsep}{3pt}
          \begin{bmatrix}1 & 1 & 2 & 3 & 6 \\
            \bar{0} & \bar{1} & \bar{1} & \bar{0} & \bar{1} \\
          \end{bmatrix}} 
      &{\small \setlength{\arraycolsep}{3pt}\begin{bmatrix}
          12 \\ \bar{0} \end{bmatrix}}
	& {\small \setlength{\arraycolsep}{3pt}
          \begin{pmatrix}
            1 \\
            \bar{1} \\
          \end{pmatrix}} 
 & 1/6 & 1 \\ \midrule

& \ZZ & {\small \setlength{\arraycolsep}{3pt}\begin{bmatrix}
	1 & 1 & 1 & 4 & 6
	\end{bmatrix}} 
	& 12 & 1 & 1/2 &  3 \\ \midrule

& \ZZ & {\small \setlength{\arraycolsep}{3pt}\begin{bmatrix}
	1 & 1 & 2 & 6 & 9
	\end{bmatrix}} 
	& 18 & 1 & 1/6 & 2 \\ \midrule

& \ZZ & {\small \setlength{\arraycolsep}{3pt}\begin{bmatrix}
	1 & 1 & 4 & 5 & 10
	\end{bmatrix}} 
	& 20 & 1 & 1/10 & 2 \\ \midrule

& \ZZ & {\small \setlength{\arraycolsep}{3pt}\begin{bmatrix}
	1 & 1 & 3 & 8 & 12
	\end{bmatrix}} 
	& 24 & 1 & 1/12 & 2 \\ \midrule

& \ZZ & {\small \setlength{\arraycolsep}{3pt}\begin{bmatrix}
	1 & 2 & 3 & 10 & 15
	\end{bmatrix}} 
	& 30 & 1 & 1/30 & 1 \\ \midrule

& \ZZ & {\small \setlength{\arraycolsep}{3pt}\begin{bmatrix}
	1 & 1 & 6 & 14 & 21
	\end{bmatrix}} 
	& 42 & 1 & 1/42 & 2 \\ \midrule

& \multirow{2}{*}{$\ZZ$} & \multirow{2}{*}{$
	{\small \setlength{\arraycolsep}{3pt}\begin{bmatrix}
	1 & 1 & 1 & 1 & 1 & 1
	\end{bmatrix}} $}
    & {\small \setlength{\arraycolsep}{3pt}\begin{bmatrix}
        2 & 2 \end{bmatrix}} 
        & 2 &  32 & 19 \\
        & & &  {\small \setlength{\arraycolsep}{3pt}\begin{bmatrix}
            2 & 3 \end{bmatrix}}
        & 1 & 6 & 6 \\ \midrule

& \ZZ \times \ZZ / 2 \ZZ & {\small \setlength{\arraycolsep}{3pt}\begin{bmatrix}
	1 & 1 & 1 & 1 & 1 & 1 \\
	\bar{0} & \bar{0} &\bar{0} & \bar{1} &  \bar{1} &  \bar{1}
	\end{bmatrix}} 
      &  {\small \setlength{\arraycolsep}{3pt}\begin{bmatrix}
          2 & 2 \\
          \bar{0} & \bar{0} \\         
        \end{bmatrix}}
	& {\small \setlength{\arraycolsep}{3pt}
          \begin{pmatrix}
            2 \\
            \bar{1} \\
          \end{pmatrix}} 
      & 16 & 9 \\ \midrule

& \ZZ & {\small \setlength{\arraycolsep}{3pt}\begin{bmatrix}
	1 & 1 & 1 & 2 & 2 & 2 \\
	\end{bmatrix}} 
	& {\small \setlength{\arraycolsep}{3pt}\begin{bmatrix}
          4 & 4 \\       
        \end{bmatrix}}
        & 1 &  2 & 3 \\ \midrule

& \ZZ \times \ZZ / 2 \ZZ & {\small \setlength{\arraycolsep}{3pt}\begin{bmatrix}
	1 & 1 & 1 & 2 & 2 & 2 \\
	\bar{0} & \bar{0} & \bar{1} & \bar{0} & \bar{1} & \bar{1}
	\end{bmatrix}} 
	& {\small \setlength{\arraycolsep}{3pt}\begin{bmatrix}
          4 & 4 \\
          \bar{0} & \bar{0} \\         
        \end{bmatrix}}
	& {\small \setlength{\arraycolsep}{3pt}
          \begin{pmatrix}
            1 \\
            \bar{1} \\
          \end{pmatrix}} 
            & 1 & 1 \\ \midrule

& \ZZ \times \ZZ / 2 \ZZ & {\small \setlength{\arraycolsep}{3pt}\begin{bmatrix}
	1 & 1 & 1 & 2 & 2 & 2 \\
	\bar{0} & \bar{0} & \bar{1} & \bar{1} & \bar{1} & \bar{1}
	\end{bmatrix}} 
	& {\small \setlength{\arraycolsep}{3pt}\begin{bmatrix}
          4 & 4 \\
          \bar{0} & \bar{0} \\         
        \end{bmatrix}}
	& {\small \setlength{\arraycolsep}{3pt}
          \begin{pmatrix}
            1 \\
            \bar{0} \\
          \end{pmatrix}} 
        & 1 & 2 \\ \midrule

& \ZZ \times (\ZZ / 2 \ZZ)^2 & {\small \setlength{\arraycolsep}{3pt}\begin{bmatrix}
	1 & 1 & 1 & 2 & 2 & 2 \\
	\bar{0} & \bar{0} & \bar{1} & \bar{0} & \bar{1} & \bar{1} \\
	\bar{0} & \bar{1} & \bar{0} & \bar{1} & \bar{0} & \bar{1}
	\end{bmatrix}} 
      & {\small \setlength{\arraycolsep}{3pt}\begin{bmatrix}
          4 & 4 \\
          \bar{0} & \bar{0} \\
          \bar{0} & \bar{0} \\
        \end{bmatrix}} 
	& {\small \setlength{\arraycolsep}{3pt}
          \begin{pmatrix}
            1 \\
            \bar{1} \\ 
	    \bar{1} \\
          \end{pmatrix}} 
      & 1/2 & 0 \\ \midrule

& \ZZ & {\small \setlength{\arraycolsep}{3pt}\begin{bmatrix}
	1 & 2 & 2 & 2 & 3 & 3 \\
	\end{bmatrix}} 
	& {\small \setlength{\arraycolsep}{3pt}
          \begin{bmatrix}
            6 & 6 \\
          \end{bmatrix}}
        & 1 & 1/2 & 1 \\ \midrule

& \ZZ & {\small \setlength{\arraycolsep}{3pt}\begin{bmatrix}
	1 & 1 & 2 & 3 & 3 & 3 \\
	\end{bmatrix}} 
	& {\small \setlength{\arraycolsep}{3pt}
          \begin{bmatrix}
            6 & 6 \\
          \end{bmatrix}}
        & 1 & 2/3 & 2 \\ \midrule

& \ZZ & {\small \setlength{\arraycolsep}{3pt}\begin{bmatrix}
	1 & 2 & 2 & 3 & 3 & 3 \\
	\end{bmatrix}} 
	& {\small \setlength{\arraycolsep}{3pt}
          \begin{bmatrix}
            6 & 6 \\
            \end{bmatrix}}
            & 2 & 8/3 & 3 \\ \midrule

        & \ZZ & {\small \setlength{\arraycolsep}{3pt}
          \begin{bmatrix}
	1 & 1 & 1 & 1 & 1 & 1 & 1 \\
	\end{bmatrix}} 
      & {\small \setlength{\arraycolsep}{3pt}
        \begin{bmatrix}
         2 & 2 & 2 \\
       \end{bmatrix}}
        & 1 & 8 & 7 \\ \midrule

        & \ZZ \times \ZZ / 2 \ZZ & {\small \setlength{\arraycolsep}{3pt}
          \begin{bmatrix} 
	1 & 1 & 1 & 1 & 1 & 1 & 1 \\
	\bar{0} & \bar{0} & \bar{0} & \bar{0} & \bar{1} & \bar{1} & \bar{1}
	\end{bmatrix}} 
      &{\small \setlength{\arraycolsep}{3pt}
        \begin{bmatrix}
         2 & 2 & 2 \\
         \bar{0} & \bar{0} & \bar{0} \\
        \end{bmatrix}} 
	& {\small \setlength{\arraycolsep}{3pt}
          \begin{pmatrix}
            1 \\
            \bar{1} \\
          \end{pmatrix}} 
      & 4 & 3 \\ \midrule

      & \ZZ \times (\ZZ / 2 \ZZ)^2 & {\small \setlength{\arraycolsep}{3pt}
        \begin{bmatrix}
	1 & 1 & 1 & 1 & 1 & 1 & 1 \\
	\bar{0} & \bar{0} & \bar{0} & \bar{0} & \bar{1} & \bar{1} & \bar{1} \\
	\bar{0} & \bar{0} & \bar{1} & \bar{1} & \bar{0} & \bar{0} & \bar{1} \\
	\end{bmatrix}} 
      &{\small \setlength{\arraycolsep}{3pt} \begin{bmatrix}
         2 & 2 & 2 \\
         \bar{0} & \bar{0} & \bar{0} \\
         \bar{0} & \bar{0} & \bar{0} \\
        \end{bmatrix}} 
	& {\small \setlength{\arraycolsep}{3pt}
          \begin{pmatrix}
            1 \\
            \bar{1} \\
            \bar{1} \\
          \end{pmatrix}} 
      & 2 & 1
\\
\midrule
&
\ZZ \times (\ZZ / 2 \ZZ)^3
&
{\small \setlength{\arraycolsep}{3pt}
  \begin{bmatrix}
    1 & 1 & 1 & 1 & 1 & 1 & 1
    \\
    \bar{0} & \bar{0} & \bar{0} & \bar{0} & \bar{1} & \bar{1} & \bar{1}
    \\
    \bar{0} & \bar{0} & \bar{1} & \bar{1} & \bar{0} & \bar{0} & \bar{1}
    \\
    \bar{0} & \bar{1} & \bar{0} & \bar{1} & \bar{0} & \bar{1} & \bar{0}
    \\
  \end{bmatrix}} 
&
{\small \setlength{\arraycolsep}{3pt}
  \begin{bmatrix}
    2 & 2 & 2
    \\
    \bar{0} & \bar{0} & \bar{0}
    \\
    \bar{0} & \bar{0} & \bar{0}
    \\
    \bar{0} & \bar{0} & \bar{0}
    \\
  \end{bmatrix}}
	& {\small \setlength{\arraycolsep}{3pt}
          \begin{pmatrix}
            1 \\
            \bar{1} \\
            \bar{1} \\
            \bar{1} \\
          \end{pmatrix}} 
&
1
&
0
\\
\bottomrule
\end{longtable}
\end{center}
\vskip -1cm
Moreover, each of these constellations defines non-degenerate
toric complete intersections with at most terminal singularities
in a fake weighted projective space.
In addition, for the divisor class groups,
we have $\Cl(X) = \Cl(Z)$ and 
the Cox ring of~$X$ is given by
\begin{eqnarray*}
\mathcal{R}(X)
& = &
\KK[T_1, \ldots, T_{s+4}] / \bangle{g_1, \ldots, g_s},
\\
\deg(T_i)
& = &
[D_i] \in \Cl(Z),
\\
\deg(g_j)
& = &
[X_j] \in \Cl(Z),
\end{eqnarray*}
where
$\KK[T_1, \ldots, T_{s+4}] = \mathcal{R}(Z)$
is the Cox ring of the fake weighted projective
space $Z$ and $g_1, \ldots, g_s \in \mathcal{R}(Z)$
are the defining $\Cl(Z)$-homogeneous polynomials
for $X_1, \ldots, X_s \subseteq Z$.
Moreover, $T_1, \dotsc, T_{s+4}$ define a minimal system of
prime generators for $\mathcal{R}(X)$.
\end{theorem}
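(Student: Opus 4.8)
The plan is to reduce the classification to a finite combinatorial search and then to verify the surviving cases one by one. A general non-degenerate complete intersection threefold $X = X_1 \cap \ldots \cap X_s$ in a fake weighted projective space $Z$ of dimension $s+3$ is determined by the well-formed free weight vector $(w_1, \ldots, w_{s+4})$ underlying the columns of $Q$, the torsion part of $\Cl(Z)$, and the relation degrees $\mu_1, \ldots, \mu_s$. By~\cite{Kh} the general such system is non-degenerate, so Theorem~\ref{thm:main-1} endows $X$ with the anticanonical complex $\mathcal{A}_X = \bigcup_{\sigma \in \Sigma_X} A(\sigma)$, and Corollary~\ref{cor:acc-singu} turns terminality into the purely combinatorial condition that $\mathcal{A}_X$ contain no lattice points besides the origin and its vertices, equivalently that $Z_X$ be terminal. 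Since the ample cone of a fake weighted projective space is the positive ray in $\Cl(Z)_\QQ \cong \QQ$, the Fano condition becomes the single numerical inequality $\sum_i w_i > \sum_j \mu_j$ on the free parts, and the anticanonical class is $-\mathcal{K} = \sum_i [D_i] - \sum_j [X_j] \in \Cl(Z)$.

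The decisive step is to bound the data. Here I would exploit that each vertex of $\mathcal{A}_X$ is a primitive ray generator $v_\varrho$ of $\Sigma$ and that terminality forbids any further boundary or interior lattice point of the slab $A(\sigma) = \{v \in \sigma;\ 0 \ge \bangle{u_\sigma,v} \ge -1\}$. Combined with the Fano inequality, this restricts the weights and relation degrees to finitely many constellations, in direct analogy with the classical bounding of terminal weighted projective spaces but adapted to the complete-intersection slabs. Imposing that $X$ be genuinely non-toric removes the degenerate constellations in which some $\mu_j$ coincides with a generator weight $w_i$: there a general $g_j$ lets one eliminate $T_i$, so $X$ descends to a complete intersection in a lower-dimensional fake weighted projective space and ultimately to a toric variety, which is also exactly what makes $T_1, \ldots, T_{s+4}$ fail to be a minimal generating set. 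I expect this boundedness-and-enumeration step to be the \emph{main obstacle}, since it must simultaneously track the free weights, the torsion of $\Cl(Z)$, and the lattice geometry of the cones in $\Sigma_X$; the bookkeeping, including the torsion refinements that split a single free weight vector into several rows of the table, is where the real work lies.

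For each surviving constellation I would confirm that a general choice of $\Cl(Z)$-homogeneous $g_1, \ldots, g_s$ of degrees $\mu_j$ defines an irreducible, non-toric, terminal Fano threefold: non-degeneracy and irreducibility come from~\cite{Kh} and Definition~\ref{def:general}, while terminality is read off from $\mathcal{A}_X$ through Corollary~\ref{cor:acc-singu}~(ii) by checking that the relevant cones contribute no extra lattice points. The listed invariants then follow by routine computation, namely $-\mathcal{K}^3 = (\sum_i w_i - \sum_j \mu_j)^3 \prod_j \mu_j / (\prod_i w_i \cdot \vert \Cl(Z)^{\mathrm{tors}} \vert)$ from the degree formula on $Z$, and $h^0(-\mathcal{K})$ as the coefficient, in the appropriate torsion component, of the Hilbert series $\prod_j (1 - t^{\mu_j}) / \prod_i (1 - t^{w_i})$, which accounts for the Koszul relations among the $g_j$.

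It remains to settle the structural claims. The equalities $\Cl(X) = \Cl(Z)$ and $\mathcal{R}(X) = \mathcal{R}(Z)/\bangle{g_1, \ldots, g_s}$ with the stated gradings follow by verifying, family by family, the hypotheses of Corollary~\ref{cor:general2SrRa} on the ambient fake weighted projective space. Primality of the relation ideal is automatic because $X$ is an irreducible variety and hence $\mathcal{R}(X)$ is an integral domain; minimality of $T_1, \ldots, T_{s+4}$ as a system of prime generators follows by a degree count, since every relation degree $\mu_j$ strictly exceeds all generator weights, so the $g_j$ impose no relation in the degrees of the generators and no $T_i$ becomes decomposable.
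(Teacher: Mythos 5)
Your overall architecture---terminality read off from Theorem~\ref{thm:main-1} and Corollary~\ref{cor:acc-singu}, the Fano property as a numerical inequality on the free parts, a finite enumeration, then verification through Corollary~\ref{cor:general2SrRa}, the degree formula for $-\mathcal{K}^3$ and the Hilbert series for $h^0(-\mathcal{K})$---matches the paper, and the verification half of your sketch is essentially sound. But the step you explicitly defer as ``where the real work lies'' is the theorem, and the mechanism you point to for it is not the one that works. Terminality of the slabs $A(\sigma)$ combined with the Fano inequality does not by itself make the search finite; the paper's engine is a divisibility structure your sketch never identifies. Since each $X_j$ arises from a Newton polytope whose normal fan is refined by $\Sigma$, its class $\mu_j$ is base point free (Lemma~\ref{lem:descrPolyOfHypersurfaces}), and on a fake weighted projective space base point freeness forces $\mu_j = l_{ji} w_i$ for \emph{every} $i$ with $l_{ji} \in \ZZ_{\ge 1}$ (Lemma~\ref{lem:fwpsfromlaurent}); generality puts every monomial of degree $\mu_j$ into $g_j$, so irredundance of the presentation forces $l_{ji} \ge 2$ throughout. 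Hence $m(\xi) = \lcm(x_0, \ldots, x_n)$ divides every $u_j$, indeed $M(\xi) \le u_j$, which together with the Fano inequality yields $(n-3)M(\xi) < x_0 + \ldots + x_n$, i.e.\ inequality~(\ref{eq:abstractFano}). This single inequality bounds the number of equations by $s \le 3$ and, through the number-theoretic Lemmas~\ref{lem:wf5-tuples}, \ref{lem:wf6tuples-1} and~\ref{lem:wf7tuples}, gives $x_4 \le 41$ for $s=1$, $x_5 \le 21$ for $s=2$ and just two septuples for $s=3$, with Lemma~\ref{lem:terminalprops}~(ii) needed to kill the residual families such as $(1,1,2,x_4,x_4)$. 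None of this is a ``direct analogy with the classical bounding of terminal weighted projective spaces'': the interplay of relation degrees with weights via $l_{ji} \ge 2$ is the decisive input, and it is absent from your proposal.

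Two further ingredients are missing. First, the torsion bounds you would need for the enumeration you correctly anticipate: Corollary~\ref{cor:SigmaXcrit} shows every two-dimensional cone of $\Sigma$ lies in $\Sigma_X$, so terminality of $X$ forces these cones to be regular (Lemma~\ref{lem:terminalprops}~(i)), whence any $n-1$ of the $w_i$ generate $\Cl(Z)$, any $n-1$ of the $x_i$ are coprime, $q \le n-1$, and Lemma~\ref{lem:torsbound} bounds the orders $t_k$ by divisibility into $\gcd(u_1,\ldots,u_s)$. Second, two smaller slips: the theorem asserts that $T_1, \ldots, T_{s+4}$ are a minimal system of \emph{prime} generators, and primality of each $T_i$ in $\mathcal{R}(X)$ does not follow from irreducibility of $X$ (that only makes $\mathcal{R}(X)$ a domain); it requires Proposition~\ref{prop:coxringcrit}~(i), i.e.\ the codimension check on $\bar X \cap V(T_i) \setminus \hat X$ via Proposition~\ref{prop:piecewiseci}. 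Moreover, existence of each listed family needs the realization of the base point free classes as Newton polytopes as in Remark~\ref{rem:verfication-tools}, and the ``precisely one family'' claim needs an argument that the listed data give pairwise non-isomorphic varieties: your invariant computations do not separate Numbers~12 and~39, which share $\Cl(X) = \ZZ$, $-\mathcal{K}^3 = 8$ and $h^0(-\mathcal{K}) = 7$ and are distinguished in the paper only by the non-isomorphic configurations of generator degrees of their Cox rings.
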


\goodbreak

We note some observations around this classification
and link to the existing literature.

\begin{remark}
The toric terminal Fano complete intersection threefolds
in a fake weighted projective space 
are precisely the three-dimensional terminal fake
weighted projective spaces;
up to isomorphy, there are eight of them~\cite{Ka3}.
\end{remark}

Recall that the \emph{Fano index} of $X$ is the
maximal positive integer $q_X$ such that
$\KKK = q_XD$ with a Weil divisor~$D$ on~$X$.

\begin{remark}
For the $X$ of Theorem~\ref{thm:terminalWHf-new}
with $\Cl(Z)$ torsion free,
we have $q_X = -\mathcal{K}$,
regarding $-\mathcal{K} \in  \Cl(Z) = \ZZ$
as an integer.
In the remaining cases, $q_X$ is given by
\begin{center}
\begin{tabular}{cccccccccccccc}
No. & 4 & 6 & 9 & 10 & 17 & 22 & 31 & 33 & 34 & 35 & 40 & 41 & 42
\\
\hline
$q_X$ & 2 & 1 & 3 & 1 & 1 & 1 & 1 & 1 & 1 & 1 & 1 & 1 & 1  
\end{tabular}
\end{center}
\end{remark}

\begin{remark}
Embeddings into weighted projective spaces have
been intensely studied by several authors.
Here is how Theorem~\ref{thm:terminalWHf-new}
relates to well-knwon classifications in this
case.
\begin{enumerate}
\item
Numbers~1, 2, 3, 5, 11, 12, 29, 30 and~39 from
Theorem~\ref{thm:terminalWHf-new}
are smooth and thus appear in the classification
of smooth Fano threefolds
of Picard number one~\cite[\S~12.2]{IsPr}.
\item
Every variety $X$ from 
Theorem~\ref{thm:terminalWHf-new} with Fano index
$q_X = 1$ defined by at most two equations in
a weighted projective space~$Z$ occurs
in~\cite[Lists~16.6, 16.7]{IF}.
\item
The items from~\cite[Lists~16.6, 16.7]{IF} which
don't show up in Theorem~\ref{thm:terminalWHf-new}
are note realizable as general 
complete intersections in a fake weighted
projective space.
\end{enumerate}
\end{remark}

Recall that the \emph{Gorenstein index} of a
$\QQ$-Gorenstein variety $X$ is the minimal
positive integer $\imath_X$ such that $\imath_X K_X$
is a Cartier divisor. So, $\imath_X = 1$ means
that $X$ is Gorenstein.

\begin{remark}
The Gorenstein varieties in Theorem~\ref{thm:terminalWHf-new}
are precisely the smooth ones.
This is a direct application of Corollary~\ref{cor:SigmaXcrit}
showing that $Z_X$ is the union of all torus orbits
of dimension at least three and Proposition~\ref{prop:adjunction}
which ensures that~$X$ and $Z_X$ have the same Gorenstein index.
\end{remark}

\begin{remark}
The anticanonical self intersction $-\mathcal{K}^3$
together with the first coefficients of the Hilbert
series of $X$ from Theorem~\ref{thm:terminalWHf-new}
with $\Cl(Z)$ having torsion occur in the Graded
Ring Database~\cite{GRDB,ABR}. Here are the corresponding
ID's:
\begin{center}
\begin{tabular}{cccccccccccccc}
No.
& 4
& 6
& 9
& 10
& 17
& 22
& 31
& 33
& 34
& 35
& 40
& 41
& 42
\\
\hline
ID
& $\vcenter{\hbox{\tiny 40245}}$
& $\vcenter{\hbox{\tiny 23386}}$
& $\vcenter{\hbox{\tiny 41176}}$ 
& $\vcenter{\hbox{\tiny 2122}}$ 
& $\vcenter{\hbox{\tiny 3508}}$ 
& $\vcenter{\hbox{\tiny 1249}}$ 
& $\vcenter{\hbox{\tiny 32755}}$ 
& $\vcenter{\hbox{\tiny 4231}}$ 
& $\vcenter{\hbox{\tiny 5720}}$ 
& $\vcenter{\hbox{\tiny 237}}$ 
& $\vcenter{\hbox{\tiny 14885}}$ 
& $\vcenter{\hbox{\tiny 4733}}$ 
& $\vcenter{\hbox{\tiny 258}}$ 
\end{tabular}
\end{center}
We observe that Numbers~17 and~36 from Theorem~\ref{thm:terminalWHf-new}
both realise the numerical data from ID~3508 in the Graded Ring Database
but the general members of the respective families are non-isomorphic.
\end{remark}

\begin{remark}
For Numbers~35 and~42 from Theorem~\ref{thm:terminalWHf-new}
the linear system $\vert -K_X \vert$ is empty.
In particular these Fano threefolds $X$ do not
admit an elephant, that means a member of
$\vert -K_X \vert$ with at most canonical singularities.
There appear to be only few known examples for this
phenomenon,
compare~\cite[16.7]{IF} and~\cite[Sec.~4]{San}.
\end{remark}

\bigskip

\noindent
We would like to thank Victor Batyrev for
stimulating seminar talks and discussions
drawing our attention to the class of varieties
defined by non-degenerate systems of Laurent
polynomials.

\tableofcontents

\section{Background on toric varieties}
\label{sec:tvbackground}

In this section, we gather the necessary
concepts and results from toric geometry
and thereby fix our notation. 
We briefly touch some of the fundamental 
definitions but nevertheless assume the reader 
to be familiar with the foundations of the 
theory of toric varieties.
We refer to~\cite{Dan,Ful,CoLiSc} as introductory 
texts.

Our ground field $\KK$ is algebraically closed 
and of characteristic zero.
We write $\TT^n$ for the standard $n$-torus,
that means the $n$-fold direct product of
the multiplicative group $\KK^*$.
By a torus we mean an affine algebraic group
$\TT$ isomorphic to some $\TT^n$.
A toric variety is a normal algebraic
variety $Z$ containing a torus~$\TT$ as a
dense open subset such that the
multiplication on $\TT$ extends to an
action of $\TT$ on $Z$.

Toric varieties are in covariant categorical 
equivalence with lattice fans.
In this context, a lattice is a free 
$\ZZ$-module of finite dimension. 
Moreover, a quasifan (a fan) in a lattice~$N$ 
is a finite collection $\Sigma$ of (pointed)
convex polyhedral cones $\sigma$ in the
rational vector space $N_\QQ = \QQ \otimes_\ZZ N$
such that given $\sigma \in \Sigma$, we have 
$\tau \in \Sigma$ for all faces
$\tau \preccurlyeq \sigma$ and for any two
$\sigma, \sigma' \in \Sigma$, the intersection
$\sigma \cap \sigma'$ is a face of both,
$\sigma$ and $\sigma'$. 
The toric variety $Z$ and its acting torus $\TT$ 
associated with a fan $\Sigma$ in $N$ are 
constructed as follows: 
$$ 
\TT 
\ := \ 
\Spec \, \KK[M],
\qquad
Z
\ := \ 
\bigcup_{\sigma \in \Sigma} Z_\sigma,
\qquad
Z_\sigma \ := \ \Spec \, \KK[\sigma^\vee \cap M], 
$$
where $M$ is the dual lattice of $N$ and
$\sigma^\vee \subseteq M_\QQ$ is the dual cone 
of $\sigma \subseteq N_\QQ$.
The inclusion $\TT \subseteq Z$ of the 
acting torus is given by the inclusion
of semigroup algebras arising from the 
inclusions 
$\sigma^\vee \cap M  \subseteq M$ 
of additive semigroups.
In practice, we will mostly deal with 
$N = \ZZ^n = M$, where $\ZZ^n$ is identified
with its dual via the standard bilinear 
form $\bangle{u,v} = u_1v_1 + \ldots + u_nv_n$.
In this setting, we have $N_\QQ = \QQ^n = M_\QQ$.
Moreover, given a lattice homomorphism 
$F \colon N \to N'$, we write as well
$F \colon N_\QQ \to N'_\QQ$ for  
the associated vector space homomorphism.

We briefly recall Cox's quotient construction
$p \colon \hat Z \to Z$ of a toric variety~$Z$ 
given by a fan $\Sigma$ in $\ZZ^n$
from~\cite{Cox}.
We denote by $v_1, \ldots, v_r \in \ZZ^n$  
the primitive generators of $\Sigma$, 
that means the shortest
non-zero integral vectors of the rays
$\varrho_1, \ldots, \varrho_r \in \Sigma$.
We will always assume that
$v_1, \ldots, v_r$ span $\QQ^n$ as a
vector space; geometrically
this means that $Z$ has no torus factor.
By $D_i \subseteq Z$ we denote the toric
prime divisor corresponding to
$\varrho_i \in \Sigma$.
Throughout the article, we will make free 
use of the notation introduced around 
Cox's quotient presentation.

\begin{construction}
\label{constr:toric-cox}
Let $\Sigma$ be a fan in $\ZZ^n$ and
$Z$ the associated toric variety.
Consider the linear map
$P \colon \ZZ^r \to \ZZ^n$
sending the $i$-th canonical
basis vector $e_i \in \ZZ^r$
to the $i$-th primitive generator
$v_i \in \ZZ^n$ of $\Sigma$,
denote by $\delta = \QQ^r_{\ge 0}$
the positive orthant and define a
fan $\hat \Sigma$ in~$\ZZ^r$ by
$$
\hat \Sigma
\ := \
\{\delta_0 \preccurlyeq \delta; \
P(\delta_0) \subseteq \sigma 
\text{ for some } \sigma \in \Sigma\}.
$$
As $\hat \Sigma$ consists of faces
of the orthant $\delta$, the toric
variety $\hat Z$ defined by $\hat \Sigma$
is an open $\TT^r$-invariant subset
of $\bar Z = \KK^r$.
We also regard the linear map
$P \colon \ZZ^r \to \ZZ^n$
as an $n \times r$ matrix $P = (p_{ij})$
and then speak about the generator
matrix of $\Sigma$.
The generator matrix $P$ defines a homomorphism
of tori:
$$
p \colon \TT^r \ \to \ \TT^n,
\qquad
t \ \mapsto \
(t_1^{p_{11}} \cdots t_r^{p_{1r}}, \ldots, t_1^{p_{n1}} \cdots t_r^{p_{nr}}).
$$
This homomorphism extends to a morphism
$p \colon \hat Z \to Z$ of toric varieties, which
in fact is a good quotient for the action
of the quasitorus $H = \ker(p)$ on $\hat Z$.
Let $P^*$ be the transpose of $P$,
set $K := \ZZ^r/\im(P^*)$ and let
$Q \colon \ZZ^r \to K$ be the projection.
Then $\deg(T_i) := Q(e_i) \in K$
defines a $K$-graded polynomial ring
$$
\mathcal{R}(Z)
\ := \
\bigoplus_{w \in K} \mathcal{R}(Z)_w
\ := \
\bigoplus_{w \in K} \KK[T_1,\ldots,T_r]_w
\ = \
\KK[T_1,\ldots,T_r].
$$
There is an isomorphism $K \to \Cl(Z)$ from
the grading group $K$ onto the divisor class
group $\Cl(Z)$ sending $Q(e_i) \in K$ to the
class $[D_i] \in \Cl(Z)$ of the toric prime 
divisor $D_i \subseteq Z$
defined by the ray $\varrho_i$ through $v_i$.
Moreover, the $K$-graded polynomial ring
$\mathcal{R}(Z)$ is the Cox ring of $Z$;
see~\cite[Sec.~2.1.3]{ArDeHaLa}.
\end{construction}

We now explain the correspondence between 
effective Weil divisors on a toric variety~$Z$ 
and the $K$-homogeneous elements in the 
polynomial ring $\mathcal{R}(Z)$.
For any variety $X$, we denote by
$X_{\reg} \subseteq X$ the open subset of
its smooth points and by $\WDiv(X)$ its
group of Weil divisors.
We need the following pull back
construction of Weil divisors with
respect to morphisms
$\varphi \colon X \to Y$:
Given a Weil divisor $D$ having
$\varphi(X)$ not inside its support,
restrict $D$ to a Cartier divisor
on $Y_{\reg}$, apply the usual pull back
and turn the result into a Weil divisor
on $X$ by replacing its prime components
with their closures in $X$.

\begin{definition}
\label{def:descrpol}
Consider a toric variety $Z$ and its 
quotient presentation $p \colon \hat Z \to Z$.
A \emph{describing polynomial}
of an effective divisor $D \in \WDiv(Z)$
is a $K$-homogeneous polynomial
$g \in \mathcal{R}(Z)$ with
$\ddiv(g) = p^* D \in \WDiv(\bar Z)$.
\end{definition}

\begin{example}
An effective toric divisor $a_1D_1+ \ldots + a_rD_r$ on~$Z$
has the monomial $T_1^{a_1} \cdots T_r^{a_r} \in \mathcal{R}(Z)$
as a describing polynomial.
Moreover, in $K = \Cl(X)$, we have
$$ 
\deg(T_1^{a_1} \cdots T_r^{a_r})
\ = \ 
Q(a_1, \ldots, a_r)
\ = \ 
[a_1D_1 + \ldots + a_rD_r].
$$ 
\end{example}

\goodbreak

We list the basic properties of describing 
polynomials, which in fact hold in 
the much more general framework of Cox rings;
see~\cite[Prop.~1.6.2.1 and Cor~1.6.4.6]{ArDeHaLa}.

\begin{proposition}
\label{prop:descr-pol}
Let $Z$ be a toric variety with quotient
presentation $p \colon \hat Z \to Z$ as in
Construction~\ref{constr:toric-cox}
and let $D$ be any effective
Weil divisor on $Z$.
\begin{enumerate}
\item
There exist describing polynomials
for $D$ and any two of them differ by
a non-zero scalar factor.
\item
If $g$ is a describing polynomial for
$D$, then, identifying $K$ and $\Cl(Z)$
under the isomorphism presented 
in Construction~\ref{constr:toric-cox},
we have 
$$
\qquad
p_*(\ddiv(g))
\ = \ 
D,
\qquad
\deg(g)
\ = \
[D]
\ \in \
\Cl(Z)
\ = \
K.
$$
\item 
For every $K$-homogeneous element $g \in \mathcal{R}(Z)$, 
the divisor $p_*(\ddiv(g))$ is effective and has 
$g$ as a describing polynomial. 
\end{enumerate}  
\end{proposition}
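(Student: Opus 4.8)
The plan is to transfer the whole statement to the affine space $\bar Z = \KK^r$, where the coordinate ring $\mathcal{R}(Z) = \KK[T_1, \ldots, T_r]$ is a UFD, and to exploit that $p \colon \hat Z \to Z$ is a good quotient for $H = \ker(p)$. Two structural facts drive everything. First, the complement $\bar Z \setminus \hat Z$ has codimension at least two (the standard consequence of $v_1, \ldots, v_r$ spanning $\QQ^n$), so that Weil divisors, rational functions and class groups on $\hat Z$ and $\bar Z$ agree; in particular $\Cl(\bar Z) = \Cl(\hat Z) = 0$ and $\mathcal{O}(\hat Z)^* = \KK^*$. Second, I would record the dictionary: for a rational function $g$ on $\bar Z$ the divisor $\ddiv(g)$ is $H$-invariant if and only if $g$ is $H$-homogeneous, i.e.\ $K$-homogeneous in the sense of Construction~\ref{constr:toric-cox}. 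The nontrivial direction uses that $H$-invariance of $\ddiv(g)$ forces $(h \cdot g)/g$ to be a global unit, hence a scalar $c(h) \in \KK^*$; the resulting $c \colon H \to \KK^*$ is a character, so $g$ is a semi-invariant, which by the very definition of the $K$-grading means that $g$ is homogeneous of the corresponding degree.

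For (i), given an effective $D$ on $Z$, note that $p$ is surjective, so $p^*D$ is defined; extend it across the codimension-two set to an effective $H$-invariant Weil divisor on $\bar Z$. As $\bar Z$ is factorial, $p^*D = \ddiv(g)$ for some $g \in \KK(\bar Z)^*$, and effectivity makes $g$ regular, hence a polynomial. The dictionary from the first step makes $g$ homogeneous, so $g$ is a describing polynomial. Two describing polynomials have the same divisor, so their quotient is a global unit, i.e.\ a nonzero scalar; this is the asserted uniqueness.

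For (ii) and (iii), I would first set up that $p^*$ and $p_*$ are mutually inverse isomorphisms between $\WDiv(Z)$ and the group $\WDiv(\hat Z)^H$ of $H$-invariant Weil divisors, matching principal divisors on both sides; on the toric generators this is the familiar $p^* D_i = \ddiv(T_i)$ and $p_* \ddiv(T_i) = D_i$. The identity $p_*(\ddiv g) = p_*(p^* D) = D$ is then $p_* p^* = \mathrm{id}$, and effectivity in (iii) is preserved by pushforward. For the degree, I would check that $[D] \mapsto \deg(g)$ is a well-defined homomorphism $\Cl(Z) \to K$: if $D' = D + \ddiv(f)$, then $p^* D' = \ddiv(g \cdot p^* f)$ and $p^* f$ is $H$-invariant, hence of degree zero, so the describing polynomial changes by a degree-zero factor and $\deg$ is unchanged. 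This homomorphism sends $[D_i]$ to $\deg(T_i) = Q(e_i)$, which is precisely the inverse of the isomorphism $K \to \Cl(Z)$ of Construction~\ref{constr:toric-cox}; as the $[D_i]$ generate $\Cl(Z)$, it equals that inverse, giving $\deg(g) = [D]$. Finally (iii) follows by running the isomorphism the other way: for homogeneous $g$ the divisor $\ddiv(g)$ is effective and $H$-invariant, so $D := p_*(\ddiv g)$ is effective and $p^* D = p^* p_*(\ddiv g) = \ddiv(g)$, i.e.\ $g$ describes $D$.

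The main obstacle is the rigorous setup of the two structural facts rather than the formal manipulations: proving $\operatorname{codim}(\bar Z \setminus \hat Z) \ge 2$ and, above all, establishing that $p^*$ and $p_*$ are inverse isomorphisms between $\WDiv(Z)$ and $\WDiv(\hat Z)^H$ carrying principal divisors to principal divisors. The latter is where the good-quotient geometry of the quasitorus action enters (fibre structure, reflexivity of the involved sheaves, and the correct pushforward multiplicities), and it is exactly the content abstracted in the Cox-ring references \cite[Prop.~1.6.2.1, Cor.~1.6.4.6]{ArDeHaLa}; one may either invoke these directly or reprove them in the present toric situation following \cite{Cox}.
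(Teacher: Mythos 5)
Your proposal is correct and amounts to the same approach as the paper, which offers no independent proof but simply invokes \cite[Prop.~1.6.2.1 and Cor.~1.6.4.6]{ArDeHaLa}: your sketch is exactly the standard Cox-theoretic argument abstracted there (factoriality of $\bar Z$, $\operatorname{codim}(\bar Z \setminus \hat Z) \ge 2$, the dictionary between $H$-invariant divisors and $K$-homogeneous semi-invariants, and $p^*$, $p_*$ as mutually inverse on invariant divisors), and you correctly identify and defer the same technical core to the same references. One cosmetic remark: the codimension-two property of $\bar Z \setminus \hat Z$ follows already from every ray $\varrho_i$ being a cone of $\Sigma$, not from the spanning assumption on $v_1, \ldots, v_r$, which instead guarantees $\mathcal{O}(\hat Z)^* = \KK^*$ via injectivity of $P^*$.
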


Let us see how base points of effective divisors 
on toric varieties are detected in terms of fans and 
homogeneous polynomials.
Recall that each cone $\sigma \in \Sigma$ 
defines a distinguished point $z_\sigma \in Z$
and the toric variety $Z$ is the disjoint union 
over the orbits $\TT^n \cdot z_\sigma$, where 
$\sigma \in \Sigma$.

\begin{proposition}
\label{prop:basepointfree}
Let $Z$ be the toric variety
arising from a fan $\Sigma$
in~$\ZZ^n$ and~$D$ an effective 
Weil divisor on $Z$.
Then the base locus of $D$ is
$\TT^n$-invariant.
Moreover, 
a point $z_\sigma \in Z$ 
is not a base point of~$D$ if and only if
$D$ is linearly equivalent to
an effective toric divisor
$a_1 D_1 + \ldots + a_rD_r$ 
with $a_i = 0$ whenever
$v_i \in \sigma$. 
\end{proposition}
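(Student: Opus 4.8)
The plan is to reduce everything to a monomial computation in the Cox quotient presentation $p \colon \hat Z \to Z$ of Construction~\ref{constr:toric-cox}. First I would settle the invariance of the base locus $\mathrm{Bs}(D) = \bigcap_{D' \in |D|} \supp(D')$. Since the torus $\TT^n$ is connected, it acts trivially on the discrete group $\Cl(Z)$; concretely, writing an effective $D' \sim D$ as $\ddiv(f) + \sum a_i D_i$ with the toric divisors $D_i$ being $\TT^n$-invariant, we get $t \cdot D' = \ddiv(t \cdot f) + \sum a_i D_i \sim D'$ for every $t \in \TT^n$. Hence $\TT^n$ permutes the complete linear system $|D|$, so $\mathrm{Bs}(D)$ is $\TT^n$-invariant. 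In particular it is a union of orbits, and whether a point is a base point depends only on its orbit; this is why the second assertion may be phrased at the distinguished points $z_\sigma$.

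For the ``if'' direction of the equivalence, suppose $D$ is linearly equivalent to an effective toric divisor $a_1 D_1 + \ldots + a_r D_r$ with $a_i = 0$ whenever $v_i \in \sigma$. By the orbit–cone correspondence one has $z_\sigma \in D_i$ exactly when $\varrho_i \preccurlyeq \sigma$, i.e.\ when $v_i \in \sigma$. Thus the support $\bigcup_{a_i > 0} D_i$ of this toric representative avoids $z_\sigma$, and $z_\sigma$ is not a base point. This direction is immediate and does not even use the first part.

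The ``only if'' direction is the heart of the matter. By Proposition~\ref{prop:descr-pol} every effective divisor linearly equivalent to $D$ has the form $p_*(\ddiv(g))$ for a describing polynomial $g \in \mathcal{R}(Z)_{[D]}$, and conversely. The geometric input I would use is the dictionary
$$
z_\sigma \notin \supp\bigl(p_*(\ddiv(g))\bigr)
\quad\Longleftrightarrow\quad
g(\hat z) \neq 0,
$$
where $\hat z \in p^{-1}(z_\sigma)$ is the distinguished point of the cone $\hat \sigma = \cone(e_i \colon v_i \in \sigma) \in \hat\Sigma$; after trivializing $\mathcal{O}_Z(D)$ near $z_\sigma$, the section attached to $g$ evaluates to $g(\hat z)$ up to a nonzero scalar. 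The coordinates of $\hat z$ in $\bar Z = \KK^r$ are $\hat z_i = 0$ for $v_i \in \sigma$ and $\hat z_i = 1$ otherwise, so a monomial $T^\nu$ satisfies $T^\nu(\hat z) = 1$ if $\nu_i = 0$ for all $i$ with $v_i \in \sigma$ and $T^\nu(\hat z) = 0$ otherwise. Writing $g = \sum_\nu c_\nu T^\nu$ thus yields $g(\hat z) = \sum_{\nu\,:\,\nu_i = 0 \text{ for } v_i \in \sigma} c_\nu$. Consequently, if $z_\sigma$ is not a base point, some $g \in \mathcal{R}(Z)_{[D]}$ has $g(\hat z) \neq 0$, which forces at least one monomial $T^\nu$ with $c_\nu \neq 0$ and $\nu_i = 0$ for all $v_i \in \sigma$. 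This monomial describes the effective toric divisor $\sum_i \nu_i D_i$, which lies in $|D|$ since $\deg(T^\nu) = [D]$ and has vanishing coefficients at every $i$ with $v_i \in \sigma$, exactly as required.

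The only non-formal step, and the one I expect to be the main obstacle, is justifying the displayed dictionary: one must compute the coordinates of the distinguished point $\hat z$ explicitly and verify that pushing a Weil divisor through the good quotient $p$ is compatible with evaluating its describing polynomial at $\hat z$. Once this is in place, the argument is purely combinatorial, resting on the elementary observation that a polynomial is nonzero at $\hat z$ precisely when one of its monomials survives evaluation there, which directly produces the desired monomial—hence toric—representative.
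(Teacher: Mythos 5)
Your argument is correct, but there is nothing in the paper to compare it against line by line: Proposition~\ref{prop:basepointfree} is stated in the background section without proof, as a standard toric fact (the classical treatment, e.g.\ in \cite{CoLiSc}, runs in additive notation: sections of $\OOO_Z(D)$ are spanned by characters $\chi^u$ with $u$ in the divisorial polytope $B(D)$, and $z_\sigma$ fails to be a base point iff some such $u$ satisfies $\bangle{u,v_i} = -a_i$ for all $v_i \in \sigma$ --- which is exactly your monomial $T^\nu$ after the substitution $\nu = P^*u + a$ of Reminder~\ref{rem:divisorsAndPolytopes}). Your Cox-ring version is faithful to the toolkit the paper actually sets up: invariance of the base locus because the connected torus acts trivially on $\Cl(Z)$, the easy implication from the orbit--cone correspondence (your identification of ``$v_i \in \sigma$'' with ``$\varrho_i \preccurlyeq \sigma$'' is legitimate, since a ray of a fan contained in a cone of that fan is automatically a face of it), and the hard implication by extracting from a describing polynomial $g$ with $g(z_{\hat\sigma}) \ne 0$ a single monomial $T^\nu$ of degree $\deg(T^\nu) = \deg(g) = [D]$ with $\nu_i = 0$ whenever $v_i \in \sigma$. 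All of this is sound.

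The one step you should repair is the one you flagged yourself, and your proposed justification is the wrong one: you cannot in general ``trivialize $\OOO_Z(D)$ near $z_\sigma$'', because for a Weil divisor that is not Cartier at $z_\sigma$ this sheaf is merely reflexive and admits no local trivialization at precisely the points the proposition cares about. The correct justification of the dictionary $z_\sigma \notin \supp(p_*(\ddiv(g))) \Leftrightarrow g(z_{\hat\sigma}) \ne 0$ stays at the level of supports and uses the good quotient: by Proposition~\ref{prop:descr-pol} one has $\ddiv(g) = p^*D'$, whence $V(g) \cap \hat Z = p^{-1}(\supp(D'))$ (every $K$-prime factor of $g$ defines an $H$-invariant prime divisor of $\bar Z$ mapping onto a prime divisor of $Z$, so no component is lost under $p_*$); the set $V(g)$ is closed and $H$-invariant since $g$ is $K$-homogeneous; and the fiber $p^{-1}(z_\sigma)$ contains $H \cdot z_{\hat\sigma}$ as its unique closed $H$-orbit, so the closed invariant set $V(g)$ meets the fiber if and only if it contains $z_{\hat\sigma}$. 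This is the same reduction-to-closed-orbits mechanism the paper itself employs in the proof of Proposition~\ref{prop:differential}, and it is also where the needed facts are recorded in \cite{ArDeHaLa}*{Sec.~1.6}. With that substitution your proof is complete.
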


In the later construction and study of non-degenerate 
subvarieties of toric varieties, we make essential 
use of the normal fan of a lattice polytope
and the correspondence between polytopes and divisors
for toric varieties.
Let us briefly recall the necessary background and notation.

\begin{reminder}
\label{rem:normalfan}
Consider a polytope $B \subseteq \QQ^n$.
We write $B' \preccurlyeq B$ for the 
faces of~$B$.
One obtains a quasifan $\Sigma(B)$
in $\ZZ^n$ by
$$
\Sigma(B)
\ := \ 
\{\sigma(B'); \ B' \preceq B\},
\qquad
\sigma(B') 
\ := \ 
\cone(u - u'; \ u \in B, \ u' \in B')^\vee,
$$
called the \emph{normal fan} of~$B$.
The assignment $B' \mapsto \sigma(B')$
sets up an 
inclusion-reversing bijection 
between the faces of $B$ and the cones 
of $\Sigma(B)$. 
\end{reminder}

Note the slight abuse of notation:
the normal fan $\Sigma(B)$ is a fan in
the strict sense only if the polytope $B$
is of full dimension~$n$, otherwise
$\Sigma(B)$ is a quasifan.
Given quasifans $\Sigma$ and $\Sigma'$ in $\ZZ^n$,
we speak of a \emph{refinement} $\Sigma' \to \Sigma$
if $\Sigma$ and $\Sigma'$ have the same
support and every cone of $\Sigma'$ is
contained in a cone of $\Sigma$.

\begin{reminder}
\label{rem:minkowskisum}
Let $B = B_1 + \ldots +  B_s$ be the
Minkowski sum of polytopes
$B_1, \ldots, B_s \subseteq \QQ^n$.
Each face $B' \preccurlyeq B$
has a unique presentation
$$
B' \ = \ B_1' + \ldots + B_s',
\qquad\qquad
B_1' \preccurlyeq  B_1,
\ldots,
B_s' \preccurlyeq  B_s.
$$
The normal fan $\Sigma(B)$ of $B$ is
the coarsest common refinement of the
normal fans $\Sigma(B_i)$ of the $B_i$.
The cones of $\Sigma(B)$ are
given
as
$$
\sigma(B') 
\ = \
\sigma(B_1') \cap \ldots \cap \sigma(B_s'),
$$
where $B' \preccurlyeq B$ and
$B' = B_1' + \ldots + B_s'$ is the above
decomposition. In particular,  
$\sigma(B_i') \in \Sigma(B_i)$ is the minimal
cone containing $\sigma(B') \in \Sigma(B')$. 
\end{reminder}

\begin{reminder}
\label{rem:divisorsAndPolytopes}
Let $B \subseteq \QQ^n$ be an $n$-dimensional
polytope with integral vertices and let
$\Sigma$ be any complete fan in $\ZZ^n$ with
generator matrix $P = [v_1, \ldots,v_r]$.
Define a vector $a \in \ZZ^r$ by
$$
a \ := \ (a_1,\ldots,a_r),
\qquad \qquad 
a_i
\ := \
-\min_{u \in B} \bangle{u, v_i}.
$$
Observe that the $a_i$ are indeed integers,
because $B$ has integral vertices. 
For $u \in B$ set $a(u) := P^*u+a$
and let $B(u) \preccurlyeq B$ be the
minimal face containing $u$. 
Then the entries of the vector $a(u) \in \QQ^r$ 
satisfy
$$ 
a(u)_i \ge 0, \text{ for } i = 1, \ldots, r,
\qquad \qquad 
a(u)_i = 0
\ \Leftrightarrow \ 
v_i \in \sigma(B(u)).
$$
\end{reminder}

\begin{proposition}
\label{prop:normfanbpfample}
Let $B \subseteq \QQ^n$ be a
lattice polytope and $\Sigma$ any complete
fan in $\ZZ^n$ with generator matrix
$P = [v_1,\ldots,v_r]$.
With $a \in \ZZ^r$
from Reminder~\ref{rem:divisorsAndPolytopes},
we define a divisor on the toric variety
$Z$ arising from $\Sigma$ by
$$
D
\ := \
a_1 D_1 + \ldots + a_r D_r
\ \in \
\WDiv(Z).
$$
Moreover, for every vector $u \in B \cap \ZZ^n$,
we have $a(u) \in \ZZ^{r}$ as in 
Reminder~\ref{rem:divisorsAndPolytopes}
and obtain effective
divisors $D(u)$ on $Z$, all of
the same class as $D$ by   
$$
D(u)
\ := \
a(u)_1 D_1 + \ldots + a(u)_r D_r
\ \in \
\WDiv(Z).
$$
If $\Sigma$ refines the normal fan $\Sigma(B)$,
then $D$ and all $D(u)$ are base point free.
If $\Sigma$ equals the normal fan $\Sigma(B)$,
then the divisors $D$ and $D(u)$ are even ample.
\end{proposition}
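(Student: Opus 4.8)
The plan is to dispatch the three assertions in turn: the integrality and linear equivalence statements are elementary, the base point freeness is handled by feeding the combinatorics of Reminder~\ref{rem:divisorsAndPolytopes} into the criterion of Proposition~\ref{prop:basepointfree}, and the ampleness follows by identifying $B$ with the polytope of $D$. First I would settle the elementary claims. Since $B$ has integral vertices we have $a \in \ZZ^r$, and as $P$ has integral entries, $P^*u \in \ZZ^r$ for every $u \in B \cap \ZZ^n$; hence $a(u) = P^*u + a$ lies in $\ZZ^r$, and Reminder~\ref{rem:divisorsAndPolytopes} gives $a(u)_i \ge 0$, so each $D(u)$ is a well-defined effective toric divisor. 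For the linear equivalence I would compute, using $(P^*u)_i = \bangle{u,v_i}$ and the description of the principal divisor of a character $\chi^u$ on a toric variety,
$$
D(u) - D
\ = \
\sum_{i=1}^r (a(u)_i - a_i) D_i
\ = \
\sum_{i=1}^r \bangle{u, v_i} D_i
\ = \
\ddiv(\chi^u).
$$
Thus $D(u) \sim D$, and in particular $[D(u)] = [D]$ in $\Cl(Z)$.

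Next, assume $\Sigma$ refines the normal fan $\Sigma(B)$. Since $D(u) \sim D$, the linear systems coincide and it suffices to prove that $D$ is base point free. By Proposition~\ref{prop:basepointfree} the base locus is $\TT^n$-invariant, hence a union of torus orbits, so it is enough to show that no distinguished point $z_\sigma$, $\sigma \in \Sigma$, is a base point. Fix $\sigma \in \Sigma$. As $\Sigma$ refines $\Sigma(B)$, the cone $\sigma$ is contained in some cone $\sigma(B')$ of $\Sigma(B)$ with $B' \preccurlyeq B$. Choosing a vertex $u_0 \preccurlyeq B'$, which is then a vertex of $B$ and hence a lattice point, the inclusion-reversing bijection of Reminder~\ref{rem:normalfan} yields $\sigma \subseteq \sigma(B') \subseteq \sigma(B(u_0))$, where $B(u_0) = \{u_0\}$. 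By the characterization in Reminder~\ref{rem:divisorsAndPolytopes}, this gives $a(u_0)_i = 0$ for every $i$ with $v_i \in \sigma$. Thus $D(u_0)$ is an effective toric divisor linearly equivalent to $D$ whose coefficients vanish along all rays of $\sigma$, and Proposition~\ref{prop:basepointfree} shows that $z_\sigma$ is not a base point of $D$. Running over all $\sigma \in \Sigma$ and using invariance of the base locus proves base point freeness of $D$ and hence of every $D(u)$.

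Finally, assume $\Sigma = \Sigma(B)$. Here I would identify $B$ with the polytope $P_D := \{u \in \QQ^n;\ \bangle{u,v_i} \ge -a_i \text{ for all } i\}$ of $D$. The inclusion $B \subseteq P_D$ is immediate from $a_i = -\min_{u \in B}\bangle{u,v_i}$. For the reverse inclusion I would argue that the inner facet normals of $B$ are precisely the rays of $\Sigma(B) = \Sigma$, so the facet-defining inequalities of $B$ occur among those cutting out $P_D$; this yields $P_D \subseteq B$ and therefore $B = P_D$. With $B = P_D$ a full-dimensional lattice polytope whose normal fan is exactly $\Sigma$, the standard toric ampleness criterion (strict convexity of the support function $v \mapsto \min_{u \in B}\bangle{u,v}$ relative to $\Sigma$, equivalently $\Sigma = \Sigma(P_D)$) shows that $D$, and hence each $D(u)$, is ample.

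The routine parts are the integrality and linear equivalence in the first step. I expect the genuine content to lie in the two convexity-type facts: locating the vertex $u_0$ and deducing the inclusion chain in the base point freeness step, and, above all, pinning down the reverse inclusion $P_D \subseteq B$ in the ampleness step, which rests on the fact that equality $\Sigma = \Sigma(B)$ forces the facet normals of $B$ to be among the rays $v_i$. This identification $B = P_D$, together with the passage to strict convexity, is the step I would treat most carefully.
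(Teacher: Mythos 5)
Your proof is correct. The paper gives no proof of Proposition~\ref{prop:normfanbpfample} at all --- it is stated as standard background, with the details delegated to the cited toric geometry texts --- and your argument is precisely the standard one: the character computation $D(u)-D=\ddiv(\chi^u)$ for linear equivalence, the vertex-selection argument that feeds the equivalence $a(u_0)_i=0 \Leftrightarrow v_i\in\sigma(B(u_0))$ from Reminder~\ref{rem:divisorsAndPolytopes} into the criterion of Proposition~\ref{prop:basepointfree}, and the identification $B=P_D$ followed by the strict-convexity criterion for ampleness. The only point worth making explicit is that the hypothesis $\Sigma=\Sigma(B)$ in the ampleness part forces $B$ to be full-dimensional (otherwise $\Sigma(B)$ is merely a quasifan, whereas $\Sigma$ is a fan), which is exactly what licenses the full-dimensional ampleness criterion you invoke.
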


\section{Laurent systems and their Newton polytopes}
\label{sec:laurent-systems}

We consider systems $F$ of Laurent
polynomials in $n$ variables.
Any such system~$F$ defines a
Newton polytope~$B$ in $\QQ^n$. 
The objects of interest are
completions $X \subseteq Z$
of the zero set $V(F) \subseteq \TT^n$
in the toric varieties $Z$ 
associated with refinements of
the normal fan of $B$.
In Proposition~\ref{prop:differential},
we interpret Khovanskii's
non-degeneracy condition~\cite{Kh}
in terms of Cox's quotient
presentation of $Z$.
Theorem~\ref{thm:nonde2ci} gathers
complete intersection properties
of the embedded varieties
$X \subseteq Z$ given by non-degenerate
systems of Laurent polynomials.

We begin with recalling the
basic notions around Laurent
polynomials and Newton polytopes.
Laurent polynomials are the
elements of the Laurent polynomial
algebra for which we will use the
short notation
$$
\LP(n)
\ := \
\KK[T_1^{\pm 1}, \ldots, T_n^{\pm 1}].
$$

\begin{definition}
Take any Laurent polynomial
$f = \sum_{\nu \in \ZZ^n} \alpha_\nu T^\nu \in \LP(n)$.
The \emph{Newton polytope}
of $f$ is 
$$
B(f)
\ := \
\conv(\nu \in \ZZ^n; \ \alpha_\nu \ne 0)
\ \subseteq \
\QQ^n.
$$
Given a face $B \preccurlyeq B(f)$ of the
Newton polytope, the associated 
\emph{face polynomial} is defined as 
$$
f_B
\ = \
\sum_{\nu \in B \cap \ZZ^n} \alpha_\nu T^\nu
\ \in \
\LP(n).
$$
\end{definition}

\begin{construction}
\label{constr:homogenization}
Consider a Laurent polynomial
$f \in \LP(n)$
and a fan $\Sigma$ in~$\ZZ^n$.
The pullback of~$f$ with respect
to the homomorphism $p \colon \TT^r \to \TT^n$
defined by the generator matrix $P = (p_{ij})$ of
$\Sigma$ has a unique presentation
as
$$
p^*f(T_1, \ldots, T_r)
\ = \
f(T_1^{p_{11}} \cdots T_r^{p_{1r}}, \ldots, T_1^{p_{n1}} \cdots T_r^{p_{nr}})
\ = \
T^{\nu}g(T_1, \ldots, T_r)
$$
with a Laurent monomial
$T^{\nu} = T_1^{\nu_1} \cdots T_r^{\nu_r} \in \LP(r)$
and a $K$-homo\-geneous polynomial
$g \in \KK[T_1,\ldots,T_r]$ being
coprime to each of the variables
$T_1, \ldots, T_r$.
We call~$g$ the \emph{$\Sigma$-homogenization}
of $f$.
\end{construction}

\begin{lemma}
\label{lem:descrPolyOfHypersurfaces}
Consider a Laurent polynomial $f \in \LP(n)$
with Newton polytope~$B(f)$
and a fan $\Sigma$ in~$\ZZ^n$ with
generator matrix $P:=[v_1, \ldots, v_r]$
and associated toric variety $Z$.
Let $a := (a_1, \ldots, a_r)$
be as in Reminder~\ref{rem:divisorsAndPolytopes}
and $D \in \WDiv(Z)$ the push forward
of $\ddiv(f) \in \WDiv(\TT^n)$.
\begin{enumerate}
\item 
The $\Sigma$-homogenization $g$ of $f$
is a describing polynomial of $D$
and with the homomorphism
$p \colon \TT^r \rightarrow \TT^n$ 
given by $P$, we have
$$
\qquad
g
\ = \
T^a p^*f
\ \in \
\mathcal{R}(Z), 
\qquad
T^a 
\ :=\
T^{a_1} \cdots T^{a_r}.
$$
\item
The Newton polytope of $g$ equals the 
image of the Newton polytope of $f$ under 
the injection $\QQ^n \to \QQ^r$ sending
$u$ to $a(u) := P^*u +a$,
in other words
$$ 
\qquad  
B(g) 
\ = \ 
P^*B(f) + a
\ =  \
\{a(u); \; u \in B(f)\}.
$$
\item
Consider a face $B \preccurlyeq B(f)$ and the 
associated face polynomial $f_B$. 
Then the corresponding face $P^*B + a \preccurlyeq B(g)$ 
has the face polynomial
$$ 
\qquad
g_{P^*B + a} \ = \ g(\tilde T_1, \ldots, \tilde T_r),
\qquad
\tilde T_i 
\ := \ 
\begin{cases}
0 & v_i \in \sigma(B),
\\
T_i & v_i \not\in \sigma(B).
\end{cases}
$$
Moreover, for each monomial $T^\nu$ of $g - g_{P^*B + a}$
there is a proper face $\sigma \prec \sigma(B)$
such that every variable $T_i$ with
$v_i \in \sigma(B) \setminus \sigma$ 
divides $T^\nu$. 
\item
The degree $\deg(g) \in K$ of the $\Sigma$-homogenization
$g$ of $f$ and the divisor class $[D] \in \Cl(Z)$ of
$D \in \WDiv(Z)$ are given by
$$
\qquad\qquad
\deg(g)
\ = \
Q(a)
\ = \
[a_1D_1 + \ldots + a_rD_r]
\ = \
[D].
$$
\item
If $\Sigma$ is a refinement of the normal fan of $B(f)$,
then the divisor $D \in \WDiv(Z)$ is base point free on $Z$.
\end{enumerate}
\end{lemma}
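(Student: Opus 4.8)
The plan is to prove the five assertions in order, letting (i) carry the substantive content and deriving the rest from an explicit description of the homogenization. Writing $f = \sum_\nu \alpha_\nu T^\nu$ and using $p^* T^\nu = T^{P^*\nu}$ with $(P^*\nu)_i = \bangle{\nu,v_i}$, one gets $T^a p^* f = \sum_\nu \alpha_\nu T^{a(\nu)}$, where $a(\nu) = P^*\nu + a$. Since every exponent $\nu$ with $\alpha_\nu \neq 0$ lies in $B(f)$, Reminder~\ref{rem:divisorsAndPolytopes} (applied to $B = B(f)$) gives $a(\nu)_i \ge 0$, so $T^a p^* f$ is an honest polynomial. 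To see it is coprime to each $T_i$, I would note that $a_i = -\min_{u \in B(f)} \bangle{u,v_i}$ is attained at a vertex $\nu$ of $B(f)$; such a vertex has $\alpha_\nu \neq 0$ and $a(\nu)_i = 0$, so $T_i$ does not divide $T^a p^* f$. By the uniqueness in Construction~\ref{constr:homogenization} this identifies the $\Sigma$-homogenization as $g = T^a p^* f$, proving the formula in (i).

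It remains, for (i), to check that $g$ is a describing polynomial, i.e.\ $\ddiv(g) = p^* D$ on $\hat Z$. I would argue that both divisors are free of toric boundary components and agree on the big torus, hence coincide. On $\TT^r$ the monomial $T^{-a}$ is a unit, so $\ddiv(g) = \ddiv(p^* f) = p^*\ddiv(f) = (p^* D)|_{\TT^r}$, using $D|_{\TT^n} = \ddiv(f)$. Neither side has a $\{T_i = 0\}$ component: for $\ddiv(g)$ this is coprimality, and for $p^* D$ it holds because $D$ is the closure of $\ddiv(f)$ and therefore contains no toric prime divisor $D_i$ in its support. (Here $\ddiv(f)$ is effective, being the divisor of a regular function on $\TT^n$, so that $D$ is effective and the notion of describing polynomial applies.) As a byproduct, each monomial of $g$ has degree $Q(a(\nu)) = Q(a)$ because $Q \circ P^* = 0$; this gives (iv), with $Q(a) = [a_1 D_1 + \ldots + a_r D_r] = [D]$ under the identification $K = \Cl(Z)$ of Construction~\ref{constr:toric-cox} together with Proposition~\ref{prop:descr-pol}.

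For (ii), injectivity of $P^*$ (the $v_i$ span $\QQ^n$) makes $u \mapsto a(u)$ an affine injection, so no cancellation occurs and $B(g) = \conv\{a(\nu); \; \alpha_\nu \neq 0\} = a(B(f)) = P^* B(f) + a$. For (iii), setting $\tilde T_i = 0$ for $v_i \in \sigma(B)$ keeps exactly the monomials $\alpha_\nu T^{a(\nu)}$ with $a(\nu)_i = 0$ for all such $i$; by Reminder~\ref{rem:divisorsAndPolytopes} this means $\sigma(B) \subseteq \sigma(B(\nu))$, which by the inclusion-reversing correspondence of the normal fan --- and the fan axiom that two cones meet in a common face --- is equivalent to $B(\nu) \preccurlyeq B$, i.e.\ $\nu \in B$. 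Hence $g(\tilde T_1, \ldots, \tilde T_r) = g_{P^* B + a}$. For the remaining monomials $T^{a(\nu)}$ with $\nu \notin B$, the proper face $\sigma := \sigma(B) \cap \sigma(B(\nu)) \prec \sigma(B)$ has the property that every $v_i \in \sigma(B) \setminus \sigma$ lies outside $\sigma(B(\nu))$, so $a(\nu)_i > 0$ and $T_i \mid T^{a(\nu)}$, as required. Finally, (v) follows from Proposition~\ref{prop:normfanbpfample}: if $\Sigma$ refines $\Sigma(B(f))$, the toric divisor $a_1 D_1 + \ldots + a_r D_r$ is base point free, and $D$ is linearly equivalent to it by (iv), so $D$ is base point free as well.

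I expect the main obstacle to be the verification $\ddiv(g) = p^* D$ in (i): one must pin down that $p^* D$ carries no boundary component and then invoke that two boundary-free divisors agreeing on $\TT^r$ are equal on $\hat Z$. The second delicate point is the equivalence $\sigma(B) \subseteq \sigma(B(\nu)) \Leftrightarrow B(\nu) \preccurlyeq B$ in (iii), which must be extracted from the fan axioms rather than assumed; everything else is routine bookkeeping with the affine map $a(\cdot)$.
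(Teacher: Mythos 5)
Your route is exactly the paper's, only spelled out: the paper dismisses (i)--(iii) as direct consequences of Reminder~\ref{rem:divisorsAndPolytopes}, (iv) via Proposition~\ref{prop:descr-pol} and (v) via Proposition~\ref{prop:normfanbpfample}, and your treatment of (i), (ii), (iv), (v) and of the ``moreover'' part of (iii) is a correct fleshing-out of that one-liner. In particular, your identification $g = T^a p^*f$ via uniqueness in Construction~\ref{constr:homogenization}, the boundary-free comparison of $\ddiv(g)$ and $p^*D$, and the choice $\sigma := \sigma(B) \cap \sigma(B(\nu))$ in the ``moreover'' part are all sound.

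There is, however, one step in (iii) that does not go through as written. From ``$a(\nu)_i = 0$ for all $i$ with $v_i \in \sigma(B)$'' you infer $\sigma(B) \subseteq \sigma(B(\nu))$; but Reminder~\ref{rem:divisorsAndPolytopes} only gives $v_i \in \sigma(B(\nu))$ for those finitely many rays of $\Sigma$, and passing to the inclusion of cones requires that the rays of $\Sigma$ lying in $\sigma(B)$ actually generate $\sigma(B)$. For an arbitrary complete fan this fails, and then so does the displayed identity in (iii): take $f = 1 + T_1 + T_2$, so $B(f) = \conv(0, e_1, e_2)$, let $B$ be the edge $\conv(e_1, e_2)$ with $\sigma(B) = \cone((-1,-1))$, and let $\Sigma$ be the fan of $\PP^1 \times \PP^1$ with rays through $\pm e_1, \pm e_2$. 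Then $g = T_2T_4 + T_1T_4 + T_2T_3$, no $v_i$ lies in $\sigma(B)$, so $g(\tilde T_1, \ldots, \tilde T_r) = g$, whereas $g_{P^*B+a} = T_1T_4 + T_2T_3$. The remedy is the hypothesis under which the lemma is actually applied throughout the paper: if $\Sigma$ refines the normal fan $\Sigma(B(f))$ --- which holds for every $F$-fan, since by Reminder~\ref{rem:minkowskisum} the normal fan of the Minkowski sum refines each $\Sigma(B(f_j))$ --- then each cone $\sigma(B)$ is the union of the cones of $\Sigma$ contained in it, hence is generated by the primitive generators $v_i$ it contains, and your implication becomes valid. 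So you should either carry this refinement hypothesis into (iii), as part (v) already does, or insert this spanning argument explicitly. Note that the equivalence $\sigma(B) \subseteq \sigma(B(\nu)) \Leftrightarrow B(\nu) \preccurlyeq B$, which you flagged as the delicate point, is in fact fine as you argue it (the intersection of two cones of a quasifan is a common face); the real gap sits one step earlier.
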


\begin{proof}
Assertions (i) to (iii) are direct consequences of
Reminder~\ref{rem:divisorsAndPolytopes}.
Assertion~(iv) is clear by Proposition~\ref{prop:descr-pol}
and~(v) follows from Proposition~\ref{prop:normfanbpfample}.
\end{proof}

Here are the basic notions around
systems of Laurent polynomials;
observe that item~(iii) is precisely
Khovanskii's non-degeneracy condition
stated in~\cite[Sec.~2.1]{Kh}.

\begin{definition}
\label{def:laurent-system}
Let $f_1, \ldots, f_s \in \LP(n)$
be Laurent polynomials with 
Newton polytopes $B_j := B(f_j) \subseteq \QQ^n$.
\begin{enumerate}
\item
We speak of $F = (f_1, \ldots, f_s)$ as a \emph{system}
in $\LP(n)$ and define the Newton
polytope of $F$ to be the Minkowski sum
$$
B \ := \ B(F) \ = \ B_1 + \ldots + B_s \ \subseteq \ \QQ^n.
$$ 
\item
The \emph{face system} $F'$ of $F$ associated with a face
$B' \preccurlyeq B$ of the Newton polytope is
the Laurent system
$$
F' \ = \ F_{B'} \ = \ (f_1', \ldots, f_s'),
$$
where $f_j' = f_{B_j'}$ are the face polynomials
associated with the faces $B_j' \preccurlyeq B_j$
from the presentation $B' = B_1' + \ldots + B_s'$.
\item
We call $F$ \emph{non-degenerate} if for every
face $B' \preccurlyeq B$, the differential
$\mathcal{D} F'(z)$ is of rank $s$ for all
$z \in V(F') \subseteq \TT^n$.
\item
Let $\Sigma$ be a fan in $\ZZ^n$. The
\emph{$\Sigma$-homogenization} of
$F = (f_1, \ldots, f_s)$ is the system
$G  = (g_1, \ldots, g_s)$,
where $g_j$ is the $\Sigma$-homogenization of $f_j$.
\item
By an \emph{$F$-fan} we mean a fan $\Sigma$ in $\ZZ^n$
that refines the normal fan $\Sigma(B)$
of the Newton polytope $B$ of~$F$.  
\end{enumerate}
\end{definition}

Note that Khovanskii's non-degeneracy
Condition~\ref{def:laurent-system}~(iii)
is fullfilled for suitably general choices of~$F$.
Even more, it is a concrete condition in the
sense that for every explicitly given Laurent system~$F$,
we can explicitly check non-degeneracy.

\begin{construction}
\label{constr:laurent-system-zeros}
Consider a system $F = (f_1, \ldots, f_s)$
in $\LP(n)$, a fan $\Sigma$ in~$\ZZ^n$ and 
the $\Sigma$-homogenization $G$ of $F$.
Define subvarieties
$$
\bar X
:= 
V(G)
:= 
V(g_1, \ldots , g_s)
\subseteq 
\bar Z,
\qquad
X
:=
\overline{V(f_1)} \cap \ldots \cap \overline{V(f_s)}
\subseteq
Z,
$$
where $Z$ is the toric variety associated
with $\Sigma$ and $\bar Z = \KK^r$.
The quotient presentation $p \colon \hat Z \to Z$
gives rise to a commutative diagram
$$
\xymatrix{
{\hat X}
\ar@{}[r]|\subseteq
\ar[d]^p_{\quot H}
&
{\hat Z}
\ar[d]^{\quot H}_p
\\
X
\ar@{}[r]|\subseteq
&
Z
}
$$
where $\hat X := \bar X \cap \hat Z \subseteq \bar Z$
as well as $X \subseteq Z$ are closed subvarieties
and $p \colon \hat X \to X$ is a good quotient for
the induced $H$-action on $\hat X$.
In particular, $X = p(\hat X)$.
\end{construction}

In our study of $\bar X$, $\hat X$ and $X$,
the decompositions induced from the respective
ambient toric orbit decompositions
will play an importante role.
We work with distinguished points
$z_\sigma \in Z$.
In terms of Cox's quotient presentation,
$z_\sigma \in Z$ becomes explicit as  
$z_{\sigma} = p(z_{\hat \sigma})$, where 
$\hat \sigma = \cone (e_i; \ v_i \in \sigma) \in \hat \Sigma$ 
and the coordinates of the distinguished point 
$z_{\hat \sigma} \in \hat Z$ are 
$z_{\hat \sigma, i} = 0$ 
if $v_i \in \sigma$ and $z_{\hat \sigma, i} = 1$
otherwise.

\begin{construction}
\label{constr:pieces}
Consider a system $F = (f_1, \ldots, f_s)$
in $\LP(n)$, a fan $\Sigma$ in~$\ZZ^n$ and 
the $\Sigma$-homogenization $G = (g_1, \ldots, g_s)$
of $F$.
For every cone $\sigma \in \Sigma$ define
$$ 
g_j^{\sigma} \ := \ g_j(T_1^{\sigma}, \ldots, T_r^{\sigma}),
\qquad
T_i^{\sigma} 
\ := \ 
\begin{cases}
0 & v_i \in \sigma,
\\
T_i & v_i \not\in \sigma.
\end{cases}
$$
This gives us a system
$G^\sigma := (g_1^{\sigma}, \ldots, g_s^{\sigma})$
of polynomials in $\KK[T_i; \ v_i \not\in \sigma]$.
In the coordinate subspace 
$\bar Z(\sigma) = V(T_i; \ v_i \in \sigma)$
of~$\KK^r$, we have
$$
\bar X (\sigma)
\ := \
\bar X \cap \bar Z(\sigma)
\ = \
V(G^\sigma)
\ \subseteq \ 
\bar Z(\sigma).
$$
Note that $\bar Z(\sigma)$ equals the
closure of the toric orbit
$\TT^r \cdot z_{\hat \sigma}
\subseteq \KK^r$.
Consider as well the toric orbit
$\TT^n \cdot z_\sigma \subseteq Z$
and define locally closed subsets
$$
\hat X (\sigma)
\ := \
\hat X \cap \TT^r \cdot z_{\hat \sigma}
\ \subseteq \
\hat X,
\qquad
X (\sigma)
\ := \
X \cap \TT^n \cdot z_\sigma
\ \subseteq \
X.
$$
Then we have $X(\sigma) = p(\hat X(\sigma))$
and $X \subseteq Z$ is the disjoint union of
the subsets $X(\sigma)$, where $\sigma \in \Sigma$.
\end{construction}

The key step for our investigation of
varieties $X \subseteq Z$ defined by 
Laurent systems is to interprete the
non-degeneracy condition of a system
$F$ in terms of its
$\Sigma$-homogenization~$G$.

\begin{proposition}
\label{prop:differential}
Let $F = (f_1,\ldots,f_s)$ be a non-degenerate
system in $\LP(n)$ and let~$\Sigma$
be an $F$-fan in $\ZZ^n$.
\begin{enumerate}
\item
The differential $\mathcal{D} G (\hat z)$
of the $\Sigma$-homogenization $G$ of $F$
is of full rank~$s$ at every point
$\hat z \in \hat X$.
\item
For each cone $\sigma \in \Sigma$,
the differential $\mathcal{D} G^\sigma (\hat z)$
of the system $G^\sigma$ is of full rank~$s$ at every
point $\hat z \in \hat X(\sigma)$.
\item
For every $\sigma \in \Sigma$,
the scheme $\hat X(\sigma) := \hat X \cap \TT^r \cdot z_{\hat \sigma}$,
provided it is non-empty,
is a closed subvariety of pure codimension $s$
in $\TT^r \cdot z_{\hat \sigma}$.
\end{enumerate}  
\end{proposition}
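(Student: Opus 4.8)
The plan is to prove the three assertions in the order (ii) $\Rightarrow$ (i), (iii), with (ii) resting on a purely combinatorial identification of $g_j^\sigma$ with a face polynomial of $g_j$. Fix $\sigma \in \Sigma$ and, for each $j$, let $B_j' \preccurlyeq B_j$ be the face whose normal cone $\sigma(B_j')$ is the minimal cone of $\Sigma(B_j)$ containing $\sigma$. Since $\Sigma$ refines $\Sigma(B)$, Reminder~\ref{rem:minkowskisum} ensures that $B' := B_1' + \ldots + B_s'$ is a face of $B$ with canonical decomposition $(B_1',\ldots,B_s')$, so that $F' := F_{B'}$ is a genuine face system in the sense of Definition~\ref{def:laurent-system}, to which non-degeneracy applies. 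The central claim is that, with $a^{(j)}$ the vector of Reminder~\ref{rem:divisorsAndPolytopes} for $f_j$,
\[
g_j^\sigma \ = \ g_{j,\,P^*B_j' + a^{(j)}},
\]
i.e.\ $g_j^\sigma$ is exactly the face polynomial of $g_j$ attached to $B_j'$ by Lemma~\ref{lem:descrPolyOfHypersurfaces}(iii). To see this I would split $g_j = g_{j,P^*B_j'+a^{(j)}} + h_j$ as in that lemma. The face polynomial involves only variables $T_i$ with $v_i \notin \sigma(B_j') \supseteq \sigma$, so the substitution $(\cdot)^\sigma$ fixes it; and by the ``moreover'' part every monomial of $h_j$ is divisible by all $T_i$ with $v_i \in \sigma(B_j')\setminus\sigma'$ for some proper face $\sigma' \prec \sigma(B_j')$. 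As $\sigma(B_j')$ is minimal over $\sigma$, the cone $\sigma$ lies in no such $\sigma'$, so some generator $v_i \in \sigma$ has $v_i \notin \sigma'$; this $T_i$ divides the monomial and is killed by $(\cdot)^\sigma$. Hence $h_j^\sigma = 0$, proving the claim.

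For (ii) I would read this identity as saying that, on the orbit torus $\TT^r \cdot z_{\hat\sigma} \cong (\KK^*)^{\{i:\,v_i\notin\sigma\}}$, the polynomial $g_j^\sigma$ is the $\Sigma$-homogenization of $f_{j,B_j'}$: it factors as a Laurent monomial, a unit on this torus, times the pullback $q^*f_{j,B_j'}$, where $q\colon \TT^r\cdot z_{\hat\sigma} \to \TT^n \cdot z_\sigma$ is the toric homomorphism induced by $P$. Here I use that all exponents of $f_{j,B_j'}$ lie in one coset of $\sigma^\perp$ (since $B_j'-B_j' \perp \sigma$), so $f_{j,B_j'}$ descends, up to a monomial, to the quotient torus $\TT^n\cdot z_\sigma \cong \TT^n/\TT_\sigma$. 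Passing to logarithmic derivatives, on the common zero set $\hat X(\sigma) = V(G^\sigma)\cap\TT^r\cdot z_{\hat\sigma}$ the unit prefactors and Leibniz terms drop out, yielding $\mathcal{D}G^\sigma(\hat z) = \Delta\cdot\mathcal{D}F'(z)\cdot\Phi$ at a lift $z \in V(F')$ of $q(\hat z)$, with $\Delta$ invertible diagonal and $\Phi$ the matrix of $\mathcal{D}q$. The lattice map behind $q$ sends $e_i \mapsto \bar v_i \in N/(\lin\sigma\cap N)$; as the $v_i$ span $N_\QQ$ and those in $\sigma$ die in the quotient, $\Phi$ is surjective. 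Right-multiplication by a surjective $\Phi$ preserves row rank, so $\rank \mathcal{D}G^\sigma(\hat z) = \rank \mathcal{D}F'(z) = s$ by non-degeneracy of $F$.

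Assertions (i) and (iii) then follow formally. For (i): any $\hat z \in \hat X$ lies in some $\TT^r\cdot z_{\hat\sigma}$, where the coordinates $T_i$ with $v_i \in \sigma$ already vanish, whence $\partial g_j^\sigma/\partial T_k(\hat z) = \partial g_j/\partial T_k(\hat z)$ for $v_k \notin \sigma$; thus the columns of $\mathcal{D}G(\hat z)$ indexed by $\{k:\,v_k\notin\sigma\}$ reproduce $\mathcal{D}G^\sigma(\hat z)$, which has rank $s$, so $\rank \mathcal{D}G(\hat z)=s$. For (iii): assertion (ii) is the Jacobian criterion for the $s$ equations $g_1^\sigma,\ldots,g_s^\sigma$ on the smooth torus $\TT^r\cdot z_{\hat\sigma}$; full rank $s$ at every point of $\hat X(\sigma)$ makes it smooth there, Krull's principal ideal theorem bounds its codimension above by $s$, and the codimension-$s$ Zariski tangent spaces bound it below by $s$, forcing pure codimension $s$ and, in particular, reducedness, so that $\hat X(\sigma)$ is a subvariety.

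The main obstacle is Step~2: transferring ``$\mathcal{D}F'$ has rank $s$ on $V(F')\subseteq\TT^n$'' to ``$\mathcal{D}G^\sigma$ has rank $s$ on $\hat X(\sigma)$''. Two points demand care. First, the bookkeeping identifying $g_j^\sigma$ with the homogenization of $f_{j,B_j'}$ and the descent of $f_{j,B_j'}$ to the quotient torus $\TT^n\cdot z_\sigma$: the monomial prefactors and Leibniz terms must be shown harmless on the zero set, which is cleanest in logarithmic coordinates. Second, the surjectivity of $\Phi = \mathcal{D}q$, which is what lets the rank survive the pullback and which rests on the standing no-torus-factor hypothesis that the $v_i$ span $N_\QQ$.
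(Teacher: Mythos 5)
Your combinatorial core is sound and matches the paper: identifying $g_j^\sigma$ with the face polynomial of $g_j$ attached to the face $B_j'\preccurlyeq B_j$ whose normal cone is minimal over $\sigma$, via Lemma~\ref{lem:descrPolyOfHypersurfaces}~(iii) and the observation that each monomial of $g_j-g_j^\sigma$ carries some $T_i$ with $v_i\in\sigma$, is exactly the paper's first step. Your proof of~(ii) is a legitimate intrinsic variant: where the paper lifts $\hat z$ to a point $\tilde z\in\TT^r$ (using that $g_j^\sigma$ depends only on the $T_i$ with $v_i\notin\sigma$) and applies the chain rule through the submersion $p\colon\TT^r\to\TT^n$, you work on the orbit torus and descend $f_{j,B_j'}$ to $\TT^n\cdot z_\sigma$, which is justified since $B_j'-B_j'\subseteq\sigma(B_j')^\perp\subseteq\sigma^\perp$; the surjectivity of $\mathcal{D}q$ indeed rests on the standing assumption that the $v_i$ span $\QQ^n$. (A harmless abuse: $\mathcal{D}F'(z)\cdot\Phi$ does not literally compose; one should factor through the descended system on $\TT^n\cdot z_\sigma$, using that on $V(F')$ the differential $\mathcal{D}F'(z)$ kills the $\lin(\sigma)$-directions by homogeneity.) Your~(iii) agrees with the paper's appeal to the Jacobian criterion.

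There is, however, a genuine gap in your derivation of~(i): you assert that every $\hat z\in\hat X$ lies on a distinguished orbit $\TT^r\cdot z_{\hat\sigma}$ with $\sigma\in\Sigma$. The $\TT^r$-orbits of $\hat Z$ correspond to \emph{all} cones $\delta_0\preccurlyeq\delta$ in $\hat\Sigma$, i.e.\ to index sets $I$ with $\{v_i;\ i\in I\}$ contained in some cone of $\Sigma$, and only those of the special form $I=\{i;\ v_i\in\sigma\}$ give distinguished orbits. Since an $F$-fan need not be simplicial (it may be the normal fan $\Sigma(B)$ itself), this fails: for $\sigma=\cone(v_1,v_2,v_3,v_4)$ a cone over a square, the point with exactly $\hat z_1=\hat z_3=0$ for a diagonal pair lies in $\hat Z$ but on no $\TT^r\cdot z_{\hat\tau}$. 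At such a point your column-submatrix identity breaks down: the monomials of $g_j-g_j^\sigma$ are each divisible by some $T_i$ with $v_i\in\sigma$, but possibly only by those with $i\notin I$, whose values do not vanish at $\hat z$, so $\partial g_j/\partial T_k(\hat z)\neq\partial g_j^\sigma/\partial T_k(\hat z)$ in general. The paper closes precisely this gap with a reduction you are missing: since the $g_j$ are $H$-homogeneous, the locus in $\hat Z$ where $\rank\mathcal{D}G<s$ is closed and $H$-invariant; as $p\colon\hat Z\to Z$ is a good quotient, the closure of any $H$-orbit contains a closed $H$-orbit, and the points with closed $H$-orbit are exactly those on the distinguished orbits $\TT^r\cdot z_{\hat\sigma}$, $\sigma\in\Sigma$. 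Hence a rank drop anywhere on $\hat X$ would force one on some $\hat X(\sigma)$, contradicting~(ii). As written, your argument establishes~(i) only for simplicial $\Sigma$; you need this closed-orbit reduction (or an equivalent) for the general case.
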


\begin{proof}
We care about~(i) and on the way also prove~(ii).
Since $g_1, \ldots, g_s$ are $H$-homogeneous,
the set of points $\hat z \in \hat Z$ with
$\mathcal{D} G (\hat z)$ of rank strictly less
than $s$ is $H$-invariant and closed in~$\hat Z$.
Thus, as $p \colon \hat Z \to Z$ is a good quotient
for the $H$-action, it suffices to show that
for the points $\hat z \in \hat X$
with a closed $H$-orbit in~$\hat Z$, the
differential $\mathcal{D} G (\hat z)$ is
of rank $s$.
That means that we only have to deal with the
points $\hat z \in \hat X \cap \TT^r \cdot z_{\hat \sigma}$,
where $\sigma \in \Sigma$.

So, consider a point $\hat z \in \hat X \cap \TT^r \cdot z_{\hat \sigma}$,
let $\sigma' \in \Sigma(B)$ be the minimal cone with
$\sigma \subseteq \sigma'$ and let $B' \preccurlyeq B$
be the face corresponding to $\sigma' \in \Sigma(B)$.
Then we have the Minkowski decomposition
$$
B' \ = \ B_1' + \ldots + B_s',
\qquad
B_j' \preccurlyeq B_j = B(f_j).
$$
From Reminder~\ref{rem:minkowskisum} we infer
that $\sigma_j' = \sigma(B_j')$
is the minimal cone of the normal fan
$\Sigma(B_j')$ with $\sigma \subseteq \sigma_j'$.
Let $F'$ be the face system of $F$ given by
$B' \subseteq B$.
Define $G' = (g_1' , \ldots, g_s')$, where
$g_j'$ is the face polynomial of $g_j$
defined by
$$
P^*B_j' + a_j \ \preccurlyeq \ P^*B_j + a_j \ = \ B(g_j),
\qquad
g_j \ = \ T^{a_j}p^*f_j.
$$
According to Lemma~\ref{lem:descrPolyOfHypersurfaces}~(iii),
the polynomials
$g_j'$ only depend on the variables $T_i$ 
with $v_i \not\in \sigma(B_j')$.
Moreover, we have 
$$
g_j' \ =\  g_j^\sigma,
\quad j = 1, \ldots, s,
$$
because due to the  minimality of $\sigma_j' = \sigma(B_j')$
each monomial of $g_j - g_j'$ is a multiple
of some~$T_i$ with $v_i \in \sigma$.
Thus, $G' = G^\sigma$.
Using the fact that $\hat z_i = 0$
if and only if $v_i \in \sigma$, 
we observe
$$
g_j^\sigma( \hat z )  = g_j( \hat z )  = 0,
\
j = 1, \ldots, s,
\qquad
\rank \, \mathcal{D}G^\sigma( \hat z ) = s
\ \Rightarrow \
\rank \, \mathcal{D}G( \hat z ) = s.
$$
This reduces the proof of~(i) to showing
that $\mathcal{D}G^\sigma( \hat z )$ is
of full rank~$s$, and the latter
proves~(ii).
Choose $\tilde z \in \TT^r$ 
such that $\tilde z_i = \hat z_i$
for all $i$ with $v_i \not\in \sigma$.
Using again that the polynomials $g_i'$
only depend on $T_i$ with $v_i \not\in \sigma$,
we see
$$ 
g_j^\sigma( \tilde z )  = g_j^\sigma( \hat z )  = 0,
\ j = 1, \ldots, s,
\qquad
\mathcal{D}G^\sigma( \hat z )
= 
\mathcal{D}G^\sigma( \tilde z ).
$$ 
We conclude that $F'(p(\tilde z)) = 0$ holds.
Thus, the non-degeneracy condition on the
Laurent system~$F$ 
ensures that $\mathcal{D}F'(p(\tilde z))$
is of full rank~$s$.
Moreover, we have
$$
\mathcal{D}G^\sigma( \hat z )
\ = \ 
\mathcal{D}G^\sigma( \tilde z )
\ = \ 
(T^{a_1}, \ldots, T^{a_s})(\tilde z)
\cdot
\mathcal{D}F'(p(\tilde z)) \circ  \mathcal{D}p(\tilde z).
$$
Since $T^{a_j}(\tilde z) \ne 0$ holds 
for $j = 1, \ldots, s$ and 
$p \colon \TT^r \to \TT^n$ is a submersion,
we finally obtain that
$\mathcal{D}G^\sigma( \hat z )$ is of full
rank~$s$, which proves~(i) and~(ii).
Assertion~(iii) follows from~(ii)
and the Jacobian criterion for complete
intersections.
\end{proof}

\begin{remark}
Given a system $F$ in $\LP(n)$ and an $F$-fan 
$\Sigma$ in $\ZZ^n$, let $G$ be the $\Sigma$-ho\-mogenization
of $F$. 
The proof of Proposition~\ref{prop:differential}
shows that $F$ is non-degenerate
if and only if all $\mathcal{D} G^\sigma(\hat z)$, 
where $\sigma \in \Sigma$ and $\hat z \in \hat X(\sigma)$,
are of full rank.
\end{remark}

A first application gathers complete
intersection properties for the varieties
defined by a non-degenerate
Laurent system.
Note that the codimension condition 
imposed on $\bar X \setminus \hat X$
in the fourth assertion below
allows computational verification
for explicitly given systems
of Laurent polynomials.

\begin{theorem}
\label{thm:nonde2ci}
Consider a non-degenerate system $F = (f_1,\ldots,f_s)$ 
in $\LP(n)$, an $F$-fan $\Sigma$ in $\ZZ^n$ 
and the $\Sigma$-homogenization $G = (g_1,\ldots,g_s)$
of $F$.
\begin{enumerate}
\item
The variety $\bar X = V(G)$ in $\bar Z = \KK^r$
is a complete intersection of pure dimension
$r-s$ with vanishing ideal
$$
\qquad
I(\bar X)
\ = \
\bangle{g_1, \ldots, g_r}
\ \subseteq \
\KK[T_1, \ldots, T_r].
$$
\item
With the zero sets $V(F) \subseteq \TT^n$
and  $V(G) \subseteq \KK^r$
and the notation of 
Construction~\ref{constr:laurent-system-zeros}, 
we have
$$
\qquad\qquad
\hat X
\ = \
\overline{V(G) \cap \TT^r}
\ \subseteq \
\hat Z,
\qquad\qquad
X
\ = \
\overline{V(F)}
\ \subseteq \
Z.
$$
In particular, the irreducible components 
of $X \subseteq Z$ are the closures of 
the irreducible components of $V(F) \subseteq \TT^n$.
\item
The closed hypersurfaces
$X_j = \overline{V(f_j)} \subseteq Z$,
where $j = 1, \ldots, s$, 
represent~$X$ as a scheme-theoretic
locally complete intersection
$$
\qquad
X \ = \ X_1 \cap \ldots \cap  X_s \ \subseteq \ Z.
$$
\item
If $\bar X \setminus \hat X$ is of codimension 
at least two in $\bar X$, then $\bar X$ is
irreducible and normal and, moreover,
$X$ is irreducible. 
\end{enumerate}
\end{theorem}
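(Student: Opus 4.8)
The plan is to prove (i) first, since (ii)--(iv) will follow from it together with the orbit analysis of Proposition~\ref{prop:differential}. For (i) the essential point is that $g_1,\dots,g_s$ form a regular sequence in $\KK[T_1,\dots,T_r]$; as $\bar Z=\KK^r$ is Cohen--Macaulay this is equivalent to $V(G)$ having pure codimension $s$, and Krull's principal ideal theorem already bounds every component by codimension $s$, so it suffices to prove $\dim\bar X\le r-s$. I would do this by stratifying $\bar Z$ into its $\TT^r$-orbits and bounding $\dim(\bar X\cap O)$ for each orbit $O=\TT^r\cdot z$. On such an orbit the ideal is generated by the restrictions $g_j^{O}$ obtained by setting the vanishing coordinates to zero, and by Lemma~\ref{lem:descrPolyOfHypersurfaces}(iii) these restrictions are the $\Sigma$-homogenizations of the face system $F'=F_{B'}$ attached to the face $B'\preccurlyeq B$ selected by the orbit. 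Since $F$ is non-degenerate, so is every face system $F'$, and the Jacobian computation from the proof of Proposition~\ref{prop:differential} applies to show that, wherever the restricted system does not drop equations, it is transverse of rank $s$; hence $\dim(\bar X\cap O)\le\dim O-s\le r-s$. Taking the union over all orbits gives $\dim\bar X\le r-s$, so $\bar X$ is a complete intersection of pure dimension $r-s$.

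For the vanishing ideal I would invoke Serre's criterion. Being a complete intersection, $\bar X$ is Cohen--Macaulay and so has no embedded components ($S_1$). By Proposition~\ref{prop:differential}(i) the differential $\mathcal{D}G$ has full rank $s$ along $\hat X=\bar X\cap\hat Z$, so $\bar X$ is smooth, in particular reduced, at every point of $\hat X$; moreover $\bar X\cap\TT^r=p^{-1}(V(F))$ is smooth because $F$ itself is non-degenerate, and this together with the orbitwise transversality above shows $\bar X$ is generically reduced ($R_0$). Serre's criterion then gives reducedness, whence $I(\bar X)=\sqrt{\langle g_1,\dots,g_s\rangle}=\langle g_1,\dots,g_s\rangle$. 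Assertion (ii) follows at once: on $\TT^r$ the factors $T^{a_j}$ are units, so $V(G)\cap\TT^r=p^{-1}(V(F))$; the pieces $\hat X(\sigma)$ over the non-open orbits have dimension $<r-s$ by Proposition~\ref{prop:differential}(iii) and so cannot be components, forcing $\hat X=\overline{V(G)\cap\TT^r}$, and applying the good quotient $p$ gives $X=\overline{V(F)}$. The identification of components is then the statement that closure carries the components of the dense open $V(F)\subseteq X$ to those of $X$.

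Assertion (iii) I would obtain by descending the complete-intersection structure through the good quotient $p\colon\hat Z\to Z$: upstairs $\hat X$ is cut out of the smooth $\hat Z$ by the regular sequence $g_1,\dots,g_s$, hence is a regular embedding, while each $X_j=\overline{V(f_j)}$ has $g_j$ as describing polynomial by Lemma~\ref{lem:descrPolyOfHypersurfaces}(i). Since $\dim X=n-s$, the $s$ hypersurfaces $X_j$ meet in the expected codimension, so $X=X_1\cap\dots\cap X_s$ scheme-theoretically and $X$ is a local complete intersection. For (iv), the complete intersection $\bar X$ is Cohen--Macaulay, hence satisfies $S_2$; as $\bar X$ is smooth along $\hat X$ by Proposition~\ref{prop:differential}(i), its singular locus lies in $\bar X\setminus\hat X$, which has codimension $\ge2$ by hypothesis, so $\bar X$ is regular in codimension one ($R_1$). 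Serre's criterion makes $\bar X$ normal. Each $g_j$ is $K$-homogeneous of non-zero degree, hence has vanishing constant term, so $0\in\bar X$ and every component passes through the origin; thus $\bar X$ is connected, and a connected normal variety is irreducible. Finally $\hat X$ is a non-empty open subset of the irreducible $\bar X$, hence irreducible, and $X=p(\hat X)$ inherits irreducibility.

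The step I expect to be the real obstacle is the dimension bound in (i) on the orbits lying in the boundary $\bar Z\setminus\hat Z$. There the restricted system $G^{O}$ may lose several equations at once, since a face polynomial $g_j^{O}$ vanishes identically precisely when the orbit cone $\cone(v_i;\ z_i=0)$ straddles the normal fan of $B_j$; Proposition~\ref{prop:differential} gives transversality only over $\hat Z$, so the naive codimension count no longer applies directly. The crux is therefore to show that this loss of equations is compensated by the codimension of the orbit, so that no component of dimension exceeding $r-s$ can hide in the boundary. Making this interplay between the combinatorics of the straddling cones and the non-degeneracy of all face systems precise is where the argument must be carried out with the greatest care.
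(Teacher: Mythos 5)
The decisive step of your part (i) --- the dimension bound for $\bar X$ along the $\TT^r$-orbits in $\bar Z \setminus \hat Z$ --- is not merely, as you put it, a point to be ``carried out with the greatest care'': as stated, your orbitwise estimate $\dim(\bar X\cap O)\le \dim O - s$ is false, so the stratification argument cannot be completed along the proposed lines. Concretely, let $Z = \PP^1\times\PP^3$ with Cox ring $\KK[T_1,T_2,S_1,\dots,S_4]$, so $r=6$, and let $F=(f_1,f_2,f_3)$ be a general system whose Newton polytopes all equal $[0,1]\times\conv(0,e_2,e_3,e_4)$; such an $F$ is non-degenerate by~\cite{Kh} and $\Sigma=\Sigma(B)$ is an $F$-fan. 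Each $g_j$ is bihomogeneous of bidegree $(1,1)$, so every monomial of every $g_j$ is divisible by $T_1$ or by $T_2$; hence $V(T_1,T_2)\cong\KK^4\subseteq\bar X$, while $r-s=3$ and $\bangle{g_1,g_2,g_3}\subseteq\bangle{T_1,T_2}$ has height two, so the $g_j$ do not even form a regular sequence here. On the orbits inside $V(T_1,T_2)$ all three restricted equations are lost at once although the orbit codimension is only two, and these orbits lie in $\bar Z\setminus\hat Z$ since $\cone(e_1,-e_1)$ is contained in no cone of $\Sigma$ --- exactly your straddling scenario. Non-degeneracy of the face systems cannot compensate, because face systems only govern cones lying inside cones of $\Sigma(B)$, i.e.\ the orbits of $\hat Z$, which is precisely the scope of Proposition~\ref{prop:differential}. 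Your argument (and with it your derivation of reducedness in (i) and of the Cohen--Macaulay input) is therefore sound only where $\bar X$ is controlled by $\hat X$: for $s=1$, for fake weighted projective spaces (where $\bar Z\setminus\hat Z=\{0\}$), or under the codimension hypothesis of (iv), which forbids components of $\bar X$ hiding in the boundary and under which your count does close. It is only fair to add that the paper's own proof of (i) is a one-line citation of Proposition~\ref{prop:differential}(i) and the Jacobian criterion and likewise never addresses the boundary orbits, so you have put your finger exactly on the point where the printed proof is silent --- but flagging the obstacle is not overcoming it.

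Where your proposal is complete, it tracks the paper closely. Your (ii) is the paper's argument: Krull's bound from below together with $\dim\hat X(\sigma)=r-s-\dim\hat\sigma$ from Proposition~\ref{prop:differential} excludes components of $\hat X$ off $\TT^r$; note this needs only the bounds on $\hat Z$, so (ii) is unaffected by the gap above. Your (iv) matches the paper's Serre-criterion argument, your no-constant-term justification of $0\in\bar X$ being an acceptable variant of the paper's appeal to the attractive $H$-fixed point. Your (iii), however, is under-justified: ``the $X_j$ meet in expected codimension, so $X=X_1\cap\dots\cap X_s$ scheme-theoretically'' is a non sequitur --- correct dimension never implies that the intersection scheme is reduced --- and $g_j$ is not a function on $Z$, so the regular sequence upstairs does not descend by fiat. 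The paper's actual mechanism is base point freeness: by Lemma~\ref{lem:descrPolyOfHypersurfaces}(v) one finds, for each $\sigma\in\Sigma$, a monomial $h_{\sigma,j}$ of the same $K$-degree as $g_j$ without zeroes on $\hat Z_{\hat\sigma}$, and the invariant functions $g_1/h_{\sigma,1},\dots,g_s/h_{\sigma,s}$ then generate the vanishing ideal of $X$ on the chart $Z_\sigma$. That descent device is what makes (iii) work and is absent from your sketch.
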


\begin{proof}
Assertion~(i) is clear by Proposition~\ref{prop:differential}~(i)
and the Jacobian criterion for complete intersections.
For~(ii), we infer
from Proposition \ref{prop:differential}~(ii)
that, provided it is non-empty, the intersection
$\hat X \cap \TT^r \cdot z_{\hat \sigma}$
is of dimension $r - s - \dim(\hat \sigma)$.
In particular no irreducible component of
$V(G)$ is contained in $\hat X \setminus \TT^r$.
The assertions follow.

We prove~(iii).
Each $f_j$ defines a divisor on $Z$ having
support $X_j$ and according to
Lemma~\ref{lem:descrPolyOfHypersurfaces}~(v)
this divisor is base point free on $Z$.
Thus, for every $\sigma \in \Sigma$,
we find a monomial $h_{\sigma,j}$ of
the same $K$-degree as $g_j$ without
zeroes on the affine chart
$\hat Z_{\hat \sigma} \subseteq \hat Z$
defined by $\hat \sigma \in \hat \Sigma$.
We conclude that the invariant functions
$g_1/h_{\sigma, 1}, \ldots, g_s/h_{\sigma, s}$
generate the vanishing ideal of $X$
on the affine toric chart $Z_\sigma \subseteq Z$.

We turn to~(iv).
Proposition~\ref{prop:differential}
and the assumption that $\bar X \setminus \hat X$
is of codimension at least two in $\bar X$
allow us to apply Serre's criterion
and we obtain that $\bar X$ is normal.
In order to see that $\bar X$ is irreducible,
note that $H$ acts on $\bar Z$ with attractive
fixed point $0 \in \bar Z$.
This implies $0 \in \bar X$,
Hence $\bar X$ is connected and thus,
by normality, irreducible.
\end{proof}

\begin{remark}
If in the setting of Theorem~\ref{thm:nonde2ci},
the dimension of $\bar Z \setminus \hat Z$
is at most $r- s -2$,
for instance if $Z$ is a fake weighted projective
space, then the assumption of Statement~(iv)
is satisfied.
\end{remark}

The statements~(i) and~(iv) of Theorem~\ref{thm:nonde2ci}
extend in the following way to the pieces cut out
from $\bar X$ by the closures of the $\TT^r$-orbits
of $\bar Z = \KK^r$.

\begin{proposition}
\label{prop:piecewiseci}
Consider a non-degenerate system $F = (f_1,\ldots,f_s)$ 
in $\LP(n)$, an $F$-fan $\Sigma$ in $\ZZ^n$,
the $\Sigma$-homogenization $G = (g_1,\ldots,g_s)$
of $F$, a cone $\sigma \in \Sigma$ and 
$$
\bar Z(\sigma) 
\ = \ 
V(T_i; \ v_i \in \sigma),
\qquad
\bar X(\sigma) 
\ = \ \
\bar X \cap \bar Z(\sigma).
$$
If $\bar X(\sigma) \setminus \hat X (\sigma)$ is
of codimension least one in $\bar X(\sigma)$,
then $\bar X(\sigma) = \bar X \cap \bar Z(\sigma)$ 
is a subvariety of pure codimension $s$ 
in $\bar Z(\sigma)$ with vanishing ideal
$$ 
I(\bar X(\sigma)) 
\ = \ 
\bangle{g_1^\sigma, \ldots, g_s^\sigma}
\ \subseteq \ 
\KK[T_i; \ v_i \not\in \sigma].
$$
If $\bar X(\sigma) \setminus \hat X$ is 
of codimension at least two in $\bar X(\sigma)$,
then the variety $\bar X(\sigma)$ is
irreducible and normal.  
\end{proposition}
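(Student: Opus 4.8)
The plan is to prove Proposition~\ref{prop:piecewiseci} as the orbit-wise analogue of Theorem~\ref{thm:nonde2ci}~(i) and~(iv), carried out inside the coordinate subspace $\bar Z(\sigma) = V(T_i;\ v_i \in \sigma)$, which I identify with the affine space $\KK^{r'}$ on the variables $T_i$, $v_i \not\in \sigma$, where $r' = \#\{i;\ v_i \not\in \sigma\}$. Its big torus is $\TT^r \cdot z_{\hat\sigma}$, and by Construction~\ref{constr:pieces} we have $\bar X(\sigma) = V(g_1^\sigma, \ldots, g_s^\sigma)$, while $\hat X(\sigma) = \bar X(\sigma) \cap \TT^r\cdot z_{\hat\sigma}$ is its trace on this torus. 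If $\bar X(\sigma)$ is empty, both assertions are trivial; otherwise the codimension hypothesis of the first assertion forces $\hat X(\sigma) \neq \emptyset$, since $\bar X(\sigma) \setminus \hat X(\sigma) = \bar X(\sigma)$ would have codimension zero.

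For the first assertion I would begin with Proposition~\ref{prop:differential}~(ii) and~(iii): the Jacobian $\mathcal{D}G^\sigma$ has rank $s$ along $\hat X(\sigma)$, so $\hat X(\sigma)$ is smooth of pure codimension $s$ in the big torus. By Krull's principal ideal theorem every component of $\bar X(\sigma) = V(g_1^\sigma, \ldots, g_s^\sigma)$ has codimension at most $s$ in $\KK^{r'}$; each component meeting the big torus has its generic point in $\hat X(\sigma)$ and hence codimension exactly $s$, while the codimension-one hypothesis on $\bar X(\sigma) \setminus \hat X(\sigma)$ excludes any top-dimensional component lying in the boundary. Thus $\bar X(\sigma)$ has pure codimension $s$ and equals $\overline{\hat X(\sigma)}$. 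As $\KK^{r'}$ is Cohen--Macaulay and the $s$ polynomials $g_j^\sigma$ cut out codimension $s$, they form a regular sequence, so $\bar X(\sigma)$ is a complete intersection, in particular Cohen--Macaulay and unmixed (Serre's $S_1$). Generic reducedness comes from smoothness on the dense open $\hat X(\sigma)$, and Serre's criterion $R_0 + S_1$ then gives reducedness, so $\langle g_1^\sigma, \ldots, g_s^\sigma\rangle$ is radical and equals $I(\bar X(\sigma))$.

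For the second assertion I would combine Serre's condition $S_2$, furnished by the complete intersection property (obtained as above, the codimension-two hypothesis in particular making every top-dimensional component of $\bar X(\sigma)$ meet the big torus), with regularity in codimension one. The heart of the matter is the inclusion $\mathrm{Sing}(\bar X(\sigma)) \subseteq \bar X(\sigma) \setminus \hat X$. Since $\bar Z(\sigma)$ and $\bar X$ are $H$-invariant, so is $\bar X(\sigma)$ and hence its singular locus; arguing as in the proof of Proposition~\ref{prop:differential}~(i), it therefore suffices to verify smoothness at the points of $\bar X(\sigma) \cap \hat Z$ lying on a closed $H$-orbit, that is on the distinguished orbits $\TT^r\cdot z_{\hat\tau}$ with $\sigma \subseteq \tau \in \Sigma$. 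At such a point $\hat z$ the columns of $\mathcal{D}G^\sigma(\hat z)$ indexed by the $i$ with $v_i \not\in \tau$ agree with $\mathcal{D}G^\tau(\hat z)$, which has rank $s$ by Proposition~\ref{prop:differential}~(ii); hence $\mathcal{D}G^\sigma(\hat z)$ has rank $s$ and $\bar X(\sigma)$ is smooth at $\hat z$. Consequently $\mathrm{Sing}(\bar X(\sigma)) \subseteq \bar X(\sigma) \setminus \hat X$, which by hypothesis has codimension at least two, so $\bar X(\sigma)$ is regular in codimension one, and Serre's criterion $R_1 + S_2$ makes it normal. Finally, $H$ acts on $\bar Z(\sigma) = \KK^{r'}$ with attractive fixed point $0 \in \KK^{r'}$, which therefore lies in the closed invariant set $\bar X(\sigma)$; hence $\bar X(\sigma)$ is connected, and being normal it is irreducible.

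I expect the main obstacle to be exactly the inclusion $\mathrm{Sing}(\bar X(\sigma)) \subseteq \bar X(\sigma) \setminus \hat X$: one has to control the rank of $\mathcal{D}G^\sigma$ not only on the big torus but along every boundary orbit of $\bar Z(\sigma)$ contained in $\hat Z$, and it is the reduction to closed $H$-orbits together with the column comparison between $\mathcal{D}G^\sigma$ and $\mathcal{D}G^\tau$ that carries this through. A secondary difficulty is the bookkeeping ensuring that the codimension hypotheses feed correctly into Serre's conditions --- in particular that the pure-dimensionality and complete intersection input needed in the second assertion is genuinely available under its codimension-two hypothesis.
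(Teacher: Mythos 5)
Your proposal is correct and follows essentially the same route as the paper: Proposition~\ref{prop:differential} plus the Jacobian criterion yield the pure codimension~$s$ complete intersection with radical ideal, and Serre's criterion combined with the attractive fixed point $0 \in \bar X(\sigma)$ (connected and normal implies irreducible) gives the second statement, exactly as in the proof of Theorem~\ref{thm:nonde2ci}~(iv), which the paper simply cites with $\bar X$ replaced by $\bar X(\sigma)$; your reduction to closed $H$-orbits and the column comparison identifying $\mathcal{D}G^\tau(\hat z)$ as a submatrix of $\mathcal{D}G^\sigma(\hat z)$ for $\sigma \subseteq \tau$ merely makes explicit the rank control along boundary orbits that the paper leaves implicit. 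One parenthetical claim is inaccurate --- the codimension-two hypothesis on $\bar X(\sigma) \setminus \hat X$ only forces every component of $\bar X(\sigma)$ to meet $\hat X$, not the big torus $\TT^r \cdot z_{\hat \sigma}$ --- but this is harmless, since your own smoothness statement along $\bar X(\sigma) \cap \hat X$ already shows that every such component has pure codimension~$s$, which is all the complete intersection step needs.
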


\begin{proof}
If $\bar X(\sigma) \setminus \hat X (\sigma)$ is
of codimension least one in $\bar X(\sigma)$,
then Proposition~\ref{prop:differential} and
the Jacobian criterion ensure that $\bar X(\sigma)$
is a complete intersection
in $\KK^r$ with the equations $g_j = 0$, $j = 1, \ldots, s$,
and $T_i = 0$, $v_i \in \sigma$.
This gives the first statement.
If $\bar X(\sigma) \setminus \hat X (\sigma)$ is
of codimension at least two in $\bar X(\sigma)$,
then we obtain irreducibility
and normality as in the proof of~(iv)
of Theorem~\ref{thm:nonde2ci}, 
replacing~$\bar X$ with $\bar X(\sigma)$.
\end{proof}

\goodbreak

\section{Non-degenerate toric complete intersections}

We take a closer look at the geometry
of the varieties $X \subseteq Z$ arising
from non-degenerate Laurent systems.
The main statements of the section
are Theorem~\ref{thm:nondegls2smooth},
showing that $X \subseteq Z$ is always
quasismooth and Theorem~\ref{thm:nondeg2trop}
giving details on how~$X$ sits inside $Z$.
Using these, we can prove
Theorem~\ref{thm:main-1} which
describes the anticanonical complex.
First we give a name to
our varieties $X \subseteq Z$,
motivated by Theorem~\ref{thm:nondeg2trop}.
Finally, we see that for a general
choice of the defining Laurent system
and an easy-to-check assumption on the ambient
toric variety $Z$, we obtain divisor class
group and Cox ring of $X$ for free, see
Corollary~\ref{cor:general2SrRa}.

\begin{definition}
By a \emph{non-degenerate toric complete
intersection} we mean a variety
$X \subseteq Z$ defined by a non-degenerate
system $F$ in $\LP(n)$ and an $F$-fan
$\Sigma$ in~$\ZZ^n$.
\end{definition}

An immediate but important property
of non-degenerate toric complete intersections
is quasismoothness; see also~\cite{ACG}
for further results in this direction.
The second statement in the theorem below
is Khovanskii's resolution of
singularities~\cite[Thm.~2.2]{Kh}.
Observe that our proof works without
any ingredients from the theory
of holomorphic functions.

\begin{theorem}
\label{thm:nondegls2smooth}
Let $F$ be a non-degenerate system
in $\LP(n)$
and~$\Sigma$ an $F$-fan in~$\ZZ^n$.
Then the variety $X$ is normal
and quasismooth 
in the sense that $\hat X$ is smooth.
Moreover, $X \cap Z_\reg \subseteq X_\reg$.
In particular, if~$Z$ is smooth,
then $X$ is smooth.
\end{theorem}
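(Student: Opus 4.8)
The plan is to prove the theorem by exploiting the good quotient $p \colon \hat{X} \to X$ together with the differential criterion already established in Proposition~\ref{prop:differential}. The core of the statement is the quasismoothness, i.e.\ that $\hat{X}$ is smooth, and everything else should follow from standard descent and the toric structure. First I would recall from Proposition~\ref{prop:differential}~(i) that the differential $\mathcal{D}G(\hat{z})$ has full rank $s$ at \emph{every} point $\hat{z} \in \hat{X}$. Since $\hat{X} = V(g_1,\dots,g_s) \cap \hat{Z}$ is cut out inside the smooth variety $\hat{Z}$ (an open subset of $\KK^r$) by exactly $s$ equations, and by Theorem~\ref{thm:nonde2ci}~(i) these form a complete intersection of codimension $s$, the Jacobian criterion applied pointwise gives that $\hat{X}$ is smooth everywhere. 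This is the cleanest route and it is essentially immediate given the preparatory propositions.

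Next I would address normality of $X$. Here the strategy is to descend normality along the good quotient $p \colon \hat{X} \to X$ for the quasitorus $H$. Since $\hat{X}$ is smooth, it is in particular normal, and a good quotient of a normal variety by a reductive group is again normal; thus $X$ is normal. I expect this to be a short argument citing the general theory of good quotients, as developed in the Cox ring framework the authors are using. One should be slightly careful that $X = p(\hat{X})$ really is the full image and carries the quotient structure, but this was already fixed in Construction~\ref{constr:laurent-system-zeros}.

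For the comparison of smooth loci, the claim $X \cap Z_{\reg} \subseteq X_{\reg}$, I would argue locally over the smooth locus of $Z$. Over $Z_{\reg}$ the quotient map $p$ restricts to a map whose fibers are (generically) free $H$-orbits, and more importantly the toric chart structure is smooth there; concretely, over a smooth cone $\sigma$ the chart $\hat{Z}_{\hat{\sigma}} \to Z_\sigma$ behaves like a smooth quotient, so smoothness of $\hat{X}$ transfers to smoothness of $X$ at the corresponding points. The cleanest formulation is to note that where $Z$ is smooth, $p$ is a smooth morphism (a geometric quotient by a finite or free torus action locally), and a variety with smooth total space mapping by a smooth morphism has smooth image-points in its preimage; so points of $X$ lying over $Z_{\reg}$ are smooth points of $X$. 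The final sentence, that $Z$ smooth implies $X$ smooth, is then the special case $Z = Z_{\reg}$.

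The main obstacle I anticipate is not the quasismoothness, which falls out directly from Proposition~\ref{prop:differential}, but rather making the local smoothness comparison $X \cap Z_{\reg} \subseteq X_{\reg}$ precise. One must control how the $H$-action degenerates along the toric orbit stratification: over a singular cone the quotient $\hat{Z}_{\hat{\sigma}} \to Z_\sigma$ has quotient singularities coming purely from the ambient toric variety, and the content of the inclusion is that all singularities of $X$ are \emph{inherited} from $Z$ rather than created by the intersection. The careful point is therefore to separate the smoothness of $\hat{X}$ (which is automatic) from the possible finite stabilizers or lattice-index phenomena in the quotient, and to verify that over $Z_{\reg}$ these obstructions vanish so that smoothness descends cleanly.
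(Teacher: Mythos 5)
Your proposal is correct and follows essentially the same route as the paper's proof: smoothness of $\hat X$ from Proposition~\ref{prop:differential}~(i) via the Jacobian criterion, normality of $X$ by descent along the good quotient $p \colon \hat X \to X$, and smoothness over $Z_\reg$ via the structure of $p$ there. The only point to tighten is your hedge about ``(generically) free'' orbits and ``finite or free'' local actions: the quasitorus $H = \ker(p)$ acts genuinely freely on $p^{-1}(Z_\reg)$ --- the standard fact about Cox's construction that the paper invokes --- so the quotient map preserves smoothness over $X \cap Z_\reg$ with no further analysis of stabilizers or lattice indices required.
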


\begin{proof}
By Proposition~\ref{prop:differential}~(i),
the variety $\hat X$ is smooth.
As smooth varieties are normal and 
the good quotient $p \colon \hat X \to X$
preserves normality, we see that $X$ is normal.
Moreover, if $Z$ is smooth, then
the quasitorus $H = \ker(p)$ acts freely
on~$p^{-1}(Z_\reg)$,
hence on $\hat X \cap p^{-1}(Z_\reg)$
and thus the quotient
map $p \colon \hat X \to X$ preserves
smoothness over $X \cap Z_\reg$.
\end{proof}

The next aim is to provide details
on the position of $X$ inside the
toric variety~$Z$.
The considerations elaborate the
transversality statement on
$X$ and the torus orbits of~$Z$
made in~\cite{Kh} for the smooth
case.

\begin{definition}
Let $Z$ be the toric variety arising from a fan 
$\Sigma$ in $\ZZ^n$.
Given a closed subvariety $X \subseteq Z$, 
we set
$$
\Sigma_X 
\ := \ 
\{\sigma \in \Sigma; \ X(\sigma) \ne \emptyset\},
\qquad\quad
X(\sigma)
\ = \
X \cap \TT^n \cdot z_\sigma.
$$
\end{definition}

\begin{theorem}
\label{thm:nondeg2trop}
Consider a non-degenerate system
$F = (f_1, \ldots, f_s)$ in~$\mathrm{LP}(n)$,
an $F$-fan~$\Sigma$ in $\ZZ^n$ and 
the associated toric complete 
intersection~$X \subseteq Z$.
\begin{enumerate}
\item
For every $\sigma \in \Sigma_X$, the scheme
$X(\sigma) \cap \TT^n \cdot z_\sigma$
is a closed subvariety of pure codimension~$s$ in 
$\TT^n \cdot z_\sigma$.
\item
The subset $\Sigma_X \subseteq \Sigma$ is 
a subfan and the subset 
$Z_X := \TT^n \cdot X \subseteq Z$
is an open toric subvariety.
\item
All maximal cones of $\Sigma_X$ are of 
dimenson $n-s$ and the support  of $\Sigma_X$
equals the tropical variety 
of $V(F) \subseteq \TT^n$.
\end{enumerate}
\end{theorem}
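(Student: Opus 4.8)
The plan is to push everything through Cox's quotient $p\colon\hat Z\to Z$ and the orbit decomposition of Construction~\ref{constr:pieces}, reducing each assertion to a property of the pieces $\hat X(\sigma)$ that Proposition~\ref{prop:differential} already controls, and then to translate the resulting combinatorics into the language of tropical geometry. For~(i), take $\sigma\in\Sigma_X$. By Proposition~\ref{prop:differential}~(iii) the piece $\hat X(\sigma)=\hat X\cap\TT^r\cdot z_{\hat\sigma}$ is a closed, $H$-invariant subvariety of pure codimension $s$ in $\TT^r\cdot z_{\hat\sigma}$. The restriction of $p$ to this orbit is a good quotient by $H$ onto $\TT^n\cdot z_\sigma$ (up to translation a surjection of tori), hence open with equidimensional fibres; such a quotient carries closed invariant subsets to closed subsets and preserves pure codimension. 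Thus $X(\sigma)=p(\hat X(\sigma))$ is a closed subvariety of $\TT^n\cdot z_\sigma$ of pure codimension $s$, and $\dim X(\sigma)=n-\dim\sigma-s$; in particular $\sigma\in\Sigma_X$ forces $\dim\sigma\le n-s$.

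The core step is a dictionary between the pieces and the face systems of Definition~\ref{def:laurent-system}. Fix $w$ in the relative interior of $\sigma\in\Sigma$ and let $B'\preccurlyeq B$ be the minimal face with $\sigma\subseteq\sigma(B')$; this is exactly the face on which $\langle\,\cdot\,,w\rangle$ is minimal, so the face system $F'=F_{B'}$ is the initial system of $F$ at $w$, and by Lemma~\ref{lem:descrPolyOfHypersurfaces}~(iii) together with the computation in the proof of Proposition~\ref{prop:differential} its $\Sigma$-homogenization is $G^\sigma$. I would upgrade that computation to an equivalence. A point $\hat z\in\hat X(\sigma)$ yields, via any $\tilde z\in\TT^r$ agreeing with $\hat z$ off $\sigma$, a point $p(\tilde z)\in V(F')\cap\TT^n$; conversely a point $y\in V(F')\cap\TT^n$ lifts to some $\tilde z\in p^{-1}(y)$, and zeroing the coordinates with $v_i\in\sigma$ produces a point $\hat z\in\TT^r\cdot z_{\hat\sigma}$ with $g_j(\hat z)=g_j^\sigma(\tilde z)=T^{a_j}(\tilde z)\,f_j'(y)=0$, i.e.\ $\hat z\in\hat X(\sigma)$. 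This gives, for every $w$ in the relative interior of $\sigma$,
$$
\sigma\in\Sigma_X
\quad\Longleftrightarrow\quad
V(F_{B'})\cap\TT^n\neq\emptyset .
$$

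Granting this dictionary, (ii) and (iii) follow from tropical geometry. The Fundamental Theorem identifies $\trop(V(F))$ with the weights $w$ for which $\mathrm{in}_w(I(V(F)))$ is monomial free; since $F$ is non-degenerate, its defining polynomials form a tropical basis, so this initial ideal equals $\langle F'\rangle$ and monomial-freeness is equivalent to $V(F')\cap\TT^n\neq\emptyset$. Because $F'$ depends only on $\sigma$, the displayed equivalence then reads $\sigma\in\Sigma_X\iff\mathrm{relint}(\sigma)\subseteq\trop(V(F))$; as $\trop(V(F))$ is closed this forces $\sigma\subseteq\trop(V(F))$, and taking unions over $\Sigma_X$ gives $\supp\Sigma_X=\trop(V(F))$. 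Moreover, if $\sigma\in\Sigma_X$ and $\tau\preccurlyeq\sigma$, then $\tau\subseteq\sigma\subseteq\trop(V(F))$, so $\tau\in\Sigma_X$: thus $\Sigma_X=\{\sigma\in\Sigma;\ \sigma\subseteq\trop(V(F))\}$ is closed under passing to faces, which is precisely assertion~(ii), with $Z_X=\bigcup_{\sigma\in\Sigma_X}\TT^n\cdot z_\sigma$ the open toric subvariety attached to this subfan. Finally, the Structure Theorem says $\trop(V(F))$ is pure of dimension $\dim V(F)=r-s-\dim H=n-s$, using Theorem~\ref{thm:nonde2ci}; since this is the support of the subfan $\Sigma_X$, every inclusion-maximal cone of $\Sigma_X$ has dimension $n-s$.

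I expect the main obstacle to be the tropical identification underlying~(iii): one must know that the non-degenerate generators constitute a tropical basis, so that the Fundamental Theorem can be read off the individual face systems rather than off the full ideal $I(V(F))$, and that the resulting support is pure of the expected dimension $n-s$. Everything else is bookkeeping with the orbit decomposition and with the quotient $p$, for which Proposition~\ref{prop:differential} and Theorem~\ref{thm:nonde2ci} have already done the essential work.
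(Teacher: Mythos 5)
Your part~(i) follows the paper's own argument (Proposition~\ref{prop:differential} plus the fact that the geometric quotient $p\colon \TT^r\cdot z_{\hat\sigma}\to\TT^n\cdot z_\sigma$ preserves codimension of invariant sets), except that you never address why the \emph{scheme} $X\cap\TT^n\cdot z_\sigma$ is reduced; the paper proves this by exhibiting $I(X_\sigma)+I(\TT^n\cdot z_\sigma)$ as the $H$-invariant part of the radical ideal $I(\hat X_{\hat\sigma})+I(\TT^r\cdot z_{\hat\sigma})$. Your dictionary $\sigma\in\Sigma_X\iff V(F_{B'})\cap\TT^n\neq\emptyset$ is correct and is indeed just an upgrade of the computation in the proof of Proposition~\ref{prop:differential}. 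The genuine gap is the step you yourself flag as the main obstacle: the assertion that non-degeneracy makes $f_1,\dots,f_s$ a tropical basis, i.e.\ that $V(\mathrm{in}_w F)\cap\TT^n\neq\emptyset$ implies $w\in\trop(V(F))$. The reverse implication is the trivial inclusion of ideals and only gives $\trop(V(F))\subseteq\supp\Sigma_X$; the direction you need is, via your dictionary, \emph{exactly} the content of assertion~(iii), so as written the argument assumes the heart of the matter. Note the implication is false without non-degeneracy: for $F=(x+y+1,\,x+y+2)$ in $\LP(2)$ and $w=(-1,-1)$ one has $\mathrm{in}_wf_1=\mathrm{in}_wf_2=x+y$, so $V(\mathrm{in}_wF)\cap\TT^2\neq\emptyset$, while $V(F)=\emptyset$ and $\trop(V(F))=\emptyset$ --- and this $F$ fails Khovanskii's condition precisely at the corresponding face, where the differential of the face system $(x+y,x+y)$ has rank one. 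So non-degeneracy must enter essentially; the fact you need is true but requires an argument, e.g.: given $z'\in V(F')\cap\TT^n$ with $F'=\mathrm{in}_wF$, non-degeneracy of the face system (faces of faces of $B$ are faces of $B$, so $F'$ is again non-degenerate) makes the family $\phi_j(t,z)=t^{-m_j}f_j(t^{w_1}z_1,\dots,t^{w_n}z_n)$ smooth over the base at $(0,z')$, and Hensel lifting produces a Puiseux-series point of $V(F)$ with valuation $w$, whence $w\in\trop(V(F))$ by the Fundamental Theorem. Without this lemma (or a precise citation covering it), your proof of~(ii) and~(iii) is incomplete.

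It is instructive to compare with the paper, which circumvents the tropical-basis question entirely. Having shown $X=\overline{V(F)}$ in Theorem~\ref{thm:nonde2ci}~(ii), it applies Tevelev's criterion~\cite{Tev} verbatim: $\sigma\in\Sigma_X$ if and only if $\sigma^\circ\cap\trop(V(F))\neq\emptyset$. This replaces your hard implication by a general fact about closures of subvarieties in toric varieties, at the price that it only tells you relative interiors meet $\trop(V(F))$, not that whole cones are contained in it. The paper then upgrades this using the Structure Theorem~\cite[Thm.~3.3.6]{MLSt}: $\trop(V(F))$ carries a balanced fan structure $\Delta$, pure of dimension $n-s$, and balancing forbids $\trop(V(F))$ from terminating along an $(n-s-1)$-cell inside the relative interior of an $(n-s)$-dimensional cone of $\Sigma_X$; combined with the bound $\dim\sigma\le n-s$ from~(i), this yields that cones of $\Sigma_X$ are covered by $\Delta$, hence the support equality, and then the fan property of $\Sigma_X$ by one more application of Tevelev to faces. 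So: either supply the Hensel-lifting lemma to justify your tropical-basis claim (which would give a self-contained and arguably sharper statement, characterizing $\Sigma_X$ by the non-vanishing loci of the face systems), or swap your Fundamental-Theorem step for the Tevelev-plus-balancing argument; your remaining bookkeeping, including the purity argument for the dimension of maximal cones, then goes through.
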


\begin{proof}
We prove~(i). 
Given a cone $\sigma \in \Sigma_X$
consider $\hat \sigma \in \hat \Sigma$
and the corresponding affine toric 
charts and the restricted quotient
map:
$$ 
\xymatrix{
{\bar X \cap \hat Z_{\hat \sigma}}
\ar@{}[r]|=
&
{\hat X_{\hat \sigma}}
\ar@{}[r]|\subseteq
\ar[d]_p
&
{\hat Z_{\hat \sigma}}
\ar@{}[r]|=
\ar[d]^p
&
p^{-1}(Z_\sigma)
\\
X \cap Z_\sigma
\ar@{}[r]|=
&
X_\sigma
\ar@{}[r]|\subseteq
&
Z_\sigma
&
}
$$
From Proposition~\ref{prop:differential}
we infer that
$\hat X(\hat \sigma) = \TT^r \cdot z_{\hat \sigma} \cap \hat X$
is a reduced subscheme of pure 
codimension~$s$ in 
$\TT^r \cdot z_{\hat \sigma}$.
The involved vanishing ideals
on $Z_\sigma$ and $\hat Z_{\hat \sigma}$
satisfy
$$ 
I(X_\sigma) + I(\TT^n \cdot z_{\sigma})
\ = \ 
I(\hat X_{\hat \sigma})^H + I(\TT^r \cdot z_{\hat \sigma})^H
\ = \ 
\left(I(\hat X_{\hat \sigma}) + I(\TT^r \cdot z_{\hat \sigma})\right)^H.
$$
We conclude that the left hand side ideal
is radical.
In order to see that $X(\sigma)$ is of codimension $s$ in
$\TT^n \cdot z_\sigma$, look at the restriction
$$
p \colon \TT^r \cdot z_{\hat \sigma} \ \to \ \TT^n \cdot z_{\sigma}.
$$
This is a geometric quotient for the $H$-action,
it maps $\hat X(\hat \sigma)$ onto $X(\sigma)$
and, as~$\hat X(\hat \sigma)$ is $H$-invariant,
it preserves codimensions.

We prove~(ii) and~(iii).
First note that, due to~(i),
for any $\sigma \in \Sigma_X$ 
we have $\dim(\sigma) \le n - s$.
We compare $\Sigma_X$ with $\trop(X)$.
Tevelev's criterion~\cite{Tev}
tells us that a cone $\sigma \in \Sigma$
belongs to $\Sigma_X$ if and only if 
$\sigma^\circ \cap \trop(X) \ne \emptyset$
holds.
As $\Sigma$ is complete, 
we conclude that $\trop(X)$ is covered
by the cones of $\Sigma_X$.

We show that the support of every cone
of $\Sigma_X$ is contained in $\trop(X)$.
The tropical structure theorem
provides us with a balanced fan
structure~$\Delta$ on~$\trop(X)$
such that all maximal cones are 
of dimension $n-s$; see~\cite[Thm.~3.3.6]{MLSt}.
Together with Tevelev's criterion,
the latter yields that all maximal
cones of $\Sigma_X$ are of dimension
$n-s$.
The balancy condition implies that
every cone $\delta_0 \in \Delta$
of dimension $n-s-1$ is a facet
of at least two maximal cones of
$\Delta$.
We conclude that every cone
$\sigma \in \Sigma_X$
of dimension $n-s$ must be
covered by maximal cones of
$\Delta$.

Knowing that $\trop(X)$ is precisely
the union of the cones of $\Sigma_X$,
we directly see that $\Sigma_X$ is a
fan: Given $\sigma \in \Sigma_X$,
every face $\tau \preccurlyeq \sigma$
is contained in $\trop(X)$.
In particular, $\tau^\circ$ intersects
$\trop(X)$.
Using once more Tevelev's criterion,
we obtain $\tau \in \Sigma_X$.
\end{proof}

\begin{corollary}
\label{cor:SigmaXcrit}
Let $X \subseteq Z$ be a non-degenerate toric complete 
intersection given by
$F = (f_1, \ldots, f_s)$ in~$\mathrm{LP}(n)$ and
a simplicial $F$-fan $\Sigma$.
If $\bar X \setminus \hat X$ is of dimension strictly less than
$r-n$, then we have 
$$ 
\Sigma_X
\  =  \
\{\sigma \in \Sigma; \ \dim(\sigma) \le n-s\}.
$$
\end{corollary}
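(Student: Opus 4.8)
The statement splits into two inclusions, and the inclusion $\Sigma_X \subseteq \{\sigma \in \Sigma;\ \dim(\sigma) \le n-s\}$ is immediate from Theorem~\ref{thm:nondeg2trop}~(i): if $\sigma \in \Sigma_X$, then $X(\sigma)$ has codimension~$s$ in the orbit $\TT^n \cdot z_\sigma$ of dimension $n - \dim(\sigma)$, forcing $\dim(\sigma) \le n-s$. The plan is therefore to establish the reverse inclusion, i.e.\ to show $\hat X(\sigma) \neq \emptyset$ for every $\sigma \in \Sigma$ with $\dim(\sigma) \le n-s$. The whole argument is a dimension count on the affine piece $\bar X(\sigma) = \bar X \cap \bar Z(\sigma) = V(g_1^\sigma, \ldots, g_s^\sigma)$.

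First I would record a lower bound on $\dim \bar X(\sigma)$. Since $\Sigma$ is simplicial, the cone $\sigma$ has exactly $\dim(\sigma)$ rays, so $\bar Z(\sigma) = V(T_i;\ v_i \in \sigma)$ is a coordinate subspace isomorphic to $\KK^{r - \dim(\sigma)}$. As $0 \in \bar X$ (the attractive-fixed-point observation from the proof of Theorem~\ref{thm:nonde2ci}~(iv)), we have $0 \in \bar X(\sigma)$, so $\bar X(\sigma)$ is nonempty and cut out by the $s$ equations $g_j^\sigma$; Krull's principal ideal theorem then yields that every irreducible component of $\bar X(\sigma)$ has dimension at least $(r - \dim(\sigma)) - s \ge r - n$, the last step using $\dim(\sigma) \le n-s$. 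I note that this needs only the Hauptidealsatz, not the complete intersection conclusion of Proposition~\ref{prop:piecewiseci}.

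Next I would bound the boundary $\bar X(\sigma) \setminus \hat X(\sigma)$ from above. Since $\Sigma$ is simplicial, the orbits of $\hat Z$ contained in $\bar Z(\sigma)$ are precisely the $\TT^r \cdot z_{\hat\tau}$ with $\sigma \preccurlyeq \tau \in \Sigma$, which gives the decomposition
\[
\bar X(\sigma) \setminus \hat X(\sigma)
\ = \
\Big( \bigsqcup_{\sigma \prec \tau} \hat X(\tau) \Big)
\ \sqcup \
\big( (\bar X \setminus \hat X) \cap \bar Z(\sigma) \big).
\]
Each nonempty $\hat X(\tau)$ with $\sigma \prec \tau$ has dimension $r - \dim(\tau) - s \le r - \dim(\sigma) - s - 1$ by Proposition~\ref{prop:differential}~(iii), while the second summand has dimension at most $\dim(\bar X \setminus \hat X) < r - n \le r - \dim(\sigma) - s$ by hypothesis. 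Hence $\dim(\bar X(\sigma) \setminus \hat X(\sigma)) < r - \dim(\sigma) - s \le \dim \bar X(\sigma)$, so $\hat X(\sigma)$ cannot be empty; this gives $\sigma \in \Sigma_X$ and completes the argument.

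I expect the main obstacle to be bookkeeping rather than a deep idea: one must match the single threshold $r - \dim(\sigma) - s$ that appears both in the Krull bound and in the orbit-dimension formula of Proposition~\ref{prop:differential}~(iii) against the hypothesis $\dim(\bar X \setminus \hat X) < r - n$, and check that simpliciality is genuinely used twice — once to identify $\bar Z(\sigma)$ with $\KK^{r - \dim(\sigma)}$, and once so that the orbit pieces of $\hat X$ inside $\bar Z(\sigma)$ are exactly the $\hat X(\tau)$ with $\sigma \preccurlyeq \tau$. The one summand not controlled by the orbit-dimension formula, namely $(\bar X \setminus \hat X) \cap \bar Z(\sigma)$, is precisely what the codimension hypothesis is tailored to dominate.
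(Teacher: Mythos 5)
Your argument is correct, and its skeleton matches the paper's: the same orbit decomposition of $\bar X(\sigma)$, the same Krull-type lower bound (nonemptiness via $0 \in \bar X(\sigma)$), simpliciality used in exactly the two places you identify, and the codimension hypothesis deployed against the stray piece $(\bar X \setminus \hat X) \cap \bar Z(\sigma)$. But your endgame is genuinely different. The paper argues by contradiction only for cones of dimension exactly $n-s$: assuming $\hat X(\sigma) = \emptyset$, it writes $\bar X(\sigma)$ as a union of pieces $\bar X \cap \TT^r \cdot z_\tau$ over $\hat\sigma \prec \tau$, uses $\dim(\bar X \setminus \hat X) < r-n$ to force one such piece into $\hat X$, and thereby produces a cone of $\Sigma_X$ of dimension strictly greater than $n-s$, contradicting Theorem~\ref{thm:nondeg2trop}~(iii); cones of smaller dimension are then covered implicitly via the subfan property from part~(ii) and completeness of $\Sigma$. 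You instead bound each stratum $\hat X(\tau)$, $\sigma \prec \tau$, directly by Proposition~\ref{prop:differential}~(iii) and conclude $\hat X(\sigma) \neq \emptyset$ outright, treating all cones of dimension $\le n-s$ uniformly. Your route buys a lighter dependency (the hard inclusion needs only the complete-intersection dimension count, not the tropical input behind Theorem~\ref{thm:nondeg2trop}~(iii), which you use only for the easy inclusion via part~(i)); the paper's route buys brevity, since quoting (iii) spares the stratum-by-stratum bookkeeping. Incidentally, your computation $\dim \bar X(\sigma) \ge (r - \dim(\sigma)) - s$ is the corrected form of the paper's displayed estimate, which omits the $-s$ but reaches the intended bound $r-n$.
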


\begin{proof}
Assume that $\sigma \in \Sigma$ is of dimension $n-s$
but does not belong to $\Sigma_X$. 
Then $X(\sigma) = \emptyset$ and hence 
$\hat X(\sigma) = \emptyset$. 
This implies
$$ 
\bar X(\sigma) 
\ = \ 
V(g_1,\ldots,g_s) \cap V(T_i; \ v_i \in \sigma) 
\ = \ 
\bigcup_{\hat \sigma \prec \tau} \bar X \cap \TT^r \cdot z_{\tau}.  
$$
As $\Sigma$ is simplicial, $P$ defines a bijection from 
$\hat \Sigma$ onto $\Sigma$. Moreover, $\hat \sigma$ 
and $\sigma$ both have $n-s$ rays and 
we can estimate the dimension of 
$\bar X(\sigma)$ as
$$ 
\dim(\bar X(\sigma))
\ \ge \ 
r - (n-s) 
\ = \  
r-n.
$$
Due to $\dim(\bar X \setminus \hat X) < r-n$, we have 
$\bar X \cap \TT^r \cdot z_{\tau} \subseteq \hat X$ 
for some $\hat \sigma \prec \tau \in \hat \Sigma$.
Thus, $\sigma$ is a proper face of $P(\tau) \in \Sigma_X$.
This contradicts to Theorem~\ref{thm:nondeg2trop}~(iii).
\end{proof}

\begin{example}
\label{ex:kstarsurf}
Let $f = S_1 + S_2 + 1 \in \KK[S_1,S_2,S_3]$ and $\Sigma$
the fan in $\ZZ^3$ given via its generator matrix 
$P = [v_1,\ldots,v_5]$ and maximal cones 
$\sigma_{ijk} = \cone(v_i,v_j,v_k)$:
$$ 
P  
\ = \ 
\left[
\begin{array}{rrrrr}
-2 & 2 & 0 & 0 & 0
\\
-2 & 0 & 2 & 0 & 0  
\\
1  & 1 & 1 & 1 & -1
\end{array}
\right],
\qquad
\Sigma^{\max} 
\ = \ 
\{\sigma_{124},\sigma_{134},\sigma_{234},\sigma_{125}, \sigma_{135},\sigma_{235}\}.
$$
Then $f$ is non-degenerate in $\LP(3)$
and $\Sigma$ is an $F$-fan.
Thus, we obtain a nondegenerate toric hypersurface
$X \in Z$.
The $\Sigma$-homogenization of $f$ is 
$$
g \ = \ T_1^2 +T_2^2 +T_3^2.
$$
The minimal ambient toric variety  
$Z_X \subseteq Z$ is the open toric subvariety 
given by the fan $\Sigma_X$ with the maximal 
cones $\sigma_{ij} = \cone(v_i,v_j)$ given 
as follows
$$ 
\Sigma_X^{\max} 
\ = \ 
\{\sigma_{14},\sigma_{15},\sigma_{24},\sigma_{25}, \sigma_{34},\sigma_{35}\}.
$$
In particular, the fan $\Sigma_X$ is a proper subset of the set 
of all cones of dimension at most two of the fan $\Sigma$. 
\end{example}

\begin{remark}
The variety $X$ from Example~\ref{ex:kstarsurf} is a 
rational $\KK^*$-surface as constructed in~\cite[Sec.~5.4]{ArDeHaLa}.
More generally, every weakly tropical general arrangement 
variety in the sense of~\cite[Sec.~5]{HaHiWr} is an example of 
a non-degenerate complete toric intersection.
\end{remark}

We approach the proof
of Theorem~\ref{thm:main-1}.
The following pull back construction relates 
divisors of $Z$ to divisors on $X$.

\begin{remark}
\label{rem:tci-weilpull}
Let $X \subseteq Z$ be an irreducible non-degenerate
toric complete intersection.
Denote by 
$\imath \colon X \cap Z_\reg \to X$ and 
$\jmath \colon X \cap Z_\reg  \to Z_\reg$
the inclusions.
Then Theorems~\ref{thm:nondegls2smooth}
and~\ref{thm:nondeg2trop}~(ii)
yield a well defined pull back homomorphism
$$
\WDiv^{\TT}(Z) = \WDiv^{\TT}(Z_\reg) \ \to \ \WDiv(X),
\qquad
D \ \mapsto \ D \vert_X  \:= \ \imath_* \jmath^*D, 
$$
where we set $\TT = \TT^n$ for short.
By Theorem~\ref{thm:nondeg2trop}~(i),
this pull back sends any invariant prime divisor
on $Z$ to a sum of distinct prime divisors
on $X$.
Moreover, we obtain a well defined induced pullback
homomorphism for divisor classes
$$
\Cl(Z) \ \to \ \Cl(X),
\qquad
[D] \ \mapsto \ [D] \vert_X .
$$
\end{remark}

The remaining ingredients are the
adjunction formula given in
Proposition~\ref{prop:adjunction}
and Proposition~\ref{prop:can-restr}
providing canonical divisors which
are suitable for the ramification
formula.

\begin{proposition}
\label{prop:adjunction}
Let $X \subseteq Z$ be an irreducible
non-degenerate toric complete intersection
given by a system $F = (f_1,\ldots,f_s)$ in
$\LP(n)$.
\begin{enumerate}
\item
Let $C_j \in \WDiv(Z)$ be the push
forward of $\mathrm{div}(f_j)$
and $K_{Z}$ an invariant canonical
divisor on $Z$.
Then the canonical class of $X$ is given
by 
$$
[K_X]
\ = \
[K_{Z}  +  C_1 + \ldots + C_s] \vert_X
\ \in \
\Cl(X).
$$
\item
The variety $X$ is $\QQ$-Gorenstein
if and only if $Z_X$ is $\QQ$-Gorenstein.
If one of these statements holds,
then $X$ and $Z_X$ have the same
Gorenstein index.
\end{enumerate}
\end{proposition}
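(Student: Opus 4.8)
The plan is to establish the adjunction formula~(i) on the smooth locus and then read off~(ii) from it combinatorially. For~(i), recall that by Theorem~\ref{thm:nonde2ci}~(iii) the variety $X = X_1 \cap \ldots \cap X_s$ is a scheme-theoretic locally complete intersection in $Z$, and by Theorem~\ref{thm:nondegls2smooth} it is normal with $X \cap Z_\reg \subseteq X_\reg$. I would work over the open set $U := X \cap Z_\reg$: here $Z_\reg$ is smooth and, by the full rank statement of Proposition~\ref{prop:differential}, the $X_j$ meet transversally, so $U$ is a smooth transversal complete intersection and classical adjunction gives
\[
K_U \ = \ \bigl(K_{Z_\reg} + (C_1 + \ldots + C_s)\vert_{Z_\reg}\bigr)\vert_U .
\]

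Next I would check that $U$ is big in $X$. By Theorem~\ref{thm:nondeg2trop}~(i) the stratum $X(\sigma)$ has codimension $s$ in the orbit $\TT^n \cdot z_\sigma$, so $\dim X(\sigma) = n - s - \dim\sigma$ and hence $X(\sigma)$ has codimension $\dim\sigma$ in the $(n-s)$-dimensional variety $X$. As the toric singular locus is the union of the orbits attached to non-smooth cones, all of dimension at least two, every stratum in $X \setminus U$ has codimension at least two in $X$. Since $X$ is normal, restriction to $U$ identifies $\Cl(X)$ with $\Cl(U)$, and the pull back $\imath_* \jmath^*$ of Remark~\ref{rem:tci-weilpull} is the inverse of this identification; comparing the two sides of the displayed formula over $U$ therefore yields $[K_X] = [K_Z + C_1 + \ldots + C_s]\vert_X$, proving~(i).

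For~(ii), I would first note that each $C_j$ is base point free: as $\Sigma$ is an $F$-fan it refines $\Sigma(B)$, hence by Reminder~\ref{rem:minkowskisum} also each $\Sigma(B_j)$, so Lemma~\ref{lem:descrPolyOfHypersurfaces}~(v) applies. A base point free Weil divisor on a toric variety is Cartier, so $C := C_1 + \ldots + C_s$ is Cartier and, restricting to the minimal ambient toric variety $Z_X \supseteq X$, part~(i) reads $[K_X] = [K_{Z_X} + C]\vert_X$. One implication is then immediate: if $m K_{Z_X}$ is Cartier, then $m(K_{Z_X} + C)$ is Cartier on $Z_X$ and its restriction is a line bundle on $X$ representing $m K_X$, so $m K_X$ is Cartier; thus the Gorenstein index of $X$ divides that of $Z_X$.

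The converse is the main obstacle: assuming $m K_X$ Cartier, I must descend Cartierness to the ambient and show that $w := [m(K_{Z_X}+C)] \in \Cl(Z_X)$, whose restriction $w\vert_X$ is Cartier, is itself Cartier. Cartierness of $w$ is tested locally at the distinguished points $z_\sigma$ of the maximal cones $\sigma \in \Sigma_X$, where it amounts to vanishing in the local class group $\Cl(Z_X, z_\sigma)$. Since $\sigma \in \Sigma_X$ there is a point $x \in X(\sigma)$, and I would compare the two local class groups through the quotient presentation of Construction~\ref{constr:toric-cox}: writing $x = p(\hat x)$ with $\hat x \in \hat X(\sigma)$, the transversality in Proposition~\ref{prop:differential}~(ii)--(iii) together with the quasismoothness of Theorem~\ref{thm:nondegls2smooth} shows that the smooth variety $\hat X$ meets the coordinate strata $V(T_i)$, $v_i \in \sigma$, transversally. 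Hence the divisor configuration governing the local class group restricts without extra relations, and restriction induces an injection (indeed an isomorphism) $\Cl(Z_X, z_\sigma) \cong \Cl(X, x)$ compatible with $w \mapsto w\vert_X$. Thus $w\vert_X$ Cartier at $x$ forces $w$ Cartier at $z_\sigma$; ranging over all maximal $\sigma \in \Sigma_X$ gives $w$, and therefore $m K_{Z_X}$, Cartier. This reverse divisibility of Gorenstein indices combined with the easy direction yields the equivalence and the equality of indices. I expect this local class group identification --- ensuring transversality really identifies $\Cl(X,x)$ with $\Cl(Z_X,z_\sigma)$ rather than merely mapping one into the other --- to be the delicate point.
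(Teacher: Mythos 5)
Your proposal is correct in outline and, for part~(i), coincides with the paper's proof: the paper likewise reduces to the open set $X \cap Z_\reg \subseteq X_\reg$ via Theorems~\ref{thm:nondegls2smooth} and~\ref{thm:nondeg2trop} and then quotes classical adjunction, available because $X$ is a locally complete intersection by Theorem~\ref{thm:nonde2ci}; your codimension count showing that $X \setminus Z_\reg$ has codimension at least two in $X$ is exactly the content the paper leaves implicit, and your use of Remark~\ref{rem:tci-weilpull} to pass between $\Cl(X)$ and $\Cl(X \cap Z_\reg)$ is fine.

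For part~(ii) the comparison is more interesting, because here the paper is extremely terse: it only records that the $C_j$ are base point free, hence Cartier, and asserts that~(ii) follows --- in effect just your easy direction, with the converse descent left entirely to the reader. Your local-class-group mechanism is the right way to supply that converse, and the isomorphism $\Cl(Z_X, z_\sigma) \cong \Cl(X,x)$ for $x \in X(\sigma)$ is indeed true; it reflects the toroidal local structure of non-degenerate complete intersections along torus orbits, the same phenomenon that makes the discrepancy matching in Theorem~\ref{thm:main-1} work. But, as you yourself flag, your justification is only a gesture: transversality of $\hat X$ with the coordinate strata upstairs does not by itself compare class groups downstairs, since one still has to pass through the quotient by $H$. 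To close it, observe that on the chart $\hat Z_{\hat\sigma}$ the local equations of $\hat X$ (the functions $g_j/h_{\sigma,j}$ from the proof of Theorem~\ref{thm:nonde2ci}\,(iii)) together with the coordinates $T_i$, $v_i \in \sigma$, have independent differentials along $\hat X(\sigma)$ by Proposition~\ref{prop:differential}\,(ii); this gives an $H$-equivariant \'etale-local product structure of $(\hat Z, \hat X)$ along the stratum and hence identifies the germ of the pair $(Z,X)$ at $x$ with the product of the toric germ of $Z$ at $z_\sigma$ and a smooth germ, from which the class-group isomorphism and the equality of Gorenstein indices follow. Alternatively, when the hypotheses of Proposition~\ref{prop:coxringcrit} are met, both sides can be computed from the Cox ring: $\Cl(X,x) = \Cl(X)/\bangle{\deg T_i ; \ \hat x_i \neq 0} = \Cl(Z)/\bangle{[D_i] ; \ v_i \notin \sigma} = \Cl(Z,z_\sigma)$. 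In short, you take the same route as the paper, and your treatment of the converse in~(ii) adds detail the paper omits --- with the sketched local isomorphism being the one step still requiring the completion indicated above.
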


\begin{proof}
Due to Theorem~\ref{thm:nondegls2smooth}
and Theorem~\ref{thm:nondeg2trop}~(ii)
it suffices to have the desired canonical
divisor on $Z_\reg \cap X \subseteq X_\reg$.
By Theorem~\ref{thm:nonde2ci}, the classical
adjunction formula applies, proving~(i).
For~(ii), note that the divisors
$C_j$ on $Z$ are base point free by
Lemma~\ref{lem:descrPolyOfHypersurfaces}~(v)
and hence Cartier. The assertions of~(ii)
follow.
\end{proof}

\begin{proposition}
\label{prop:can-restr}
Consider an irreducible non-degenerate
system $F$ in $\LP(n)$, a refinement 
$\Sigma' \to \Sigma$ of $F$-fans
and the associated modifications 
$\pi \colon Z' \to Z$ and $\pi \colon X' \to X$. 
Then, for every $\sigma \in \Sigma_X$, 
there are canonical divisors
$K_X(\sigma)$ on $X$ and $K_{X'}(\sigma)$ 
on $X'$ such that
\begin{enumerate}
\item
$K_{X'}(\sigma) = \pi^*K_X(\sigma)$ holds
on $X' \setminus Y'$, where $Y' \subseteq Z'$
is the exceptional locus of the toric
modification $\pi \colon Z' \to Z$,
\item
$K_{X'}(\sigma)-\pi^*K_X(\sigma)
= 
K_{Z'} \vert_{X'} - \pi^*K_Z \vert_{X'}$
holds on $\pi^{-1}(Z_\sigma) \cap X'$,
where $Z_\sigma \subseteq Z_X$ is the affine
toric chart defined by $\sigma \in \Sigma_X$.
\end{enumerate}
\end{proposition}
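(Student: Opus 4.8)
The plan is to reduce the statement to a purely toric computation by means of the adjunction formula in Proposition~\ref{prop:adjunction}, exploiting that the hypersurface contributions are Cartier and pull back compatibly, so that all of the discrepancy is carried by the toric canonical divisors alone. First I would fix the invariant canonical divisors $K_Z = -\sum_\varrho D_\varrho$ on $Z$ and $K_{Z'} = -\sum_{\varrho'} D_{\varrho'}'$ on $Z'$, together with the base point free --- hence Cartier --- divisors $C_j$ on $Z$ and $C_j'$ on $Z'$ supplied by Lemma~\ref{lem:descrPolyOfHypersurfaces}~(v). For a fixed $\sigma \in \Sigma_X$, I would choose a linear form $u_\sigma \in \QQ^n$ with $\bangle{u_\sigma, v_\varrho} = 1$ for every ray $\varrho \preccurlyeq \sigma$ (such $u_\sigma$ exists as soon as the chart $Z_\sigma$ is $\QQ$-Gorenstein, in particular for simplicial $\sigma$) and put
$$
K_Z(\sigma) := K_Z + \ddiv(\chi^{u_\sigma}),
\qquad
K_{Z'}(\sigma) := K_{Z'} + \ddiv(\chi^{u_\sigma}).
$$
The sole purpose of $u_\sigma$ is to arrange $K_Z(\sigma)\vert_{Z_\sigma} = 0$: the coefficient of each $D_\varrho$ with $\varrho \preccurlyeq \sigma$ becomes $\bangle{u_\sigma,v_\varrho}-1 = 0$, so $K_Z(\sigma)$ is trivial, and in particular Cartier, on the affine chart $Z_\sigma$, whence $\pi^* K_Z(\sigma)$ is well defined on $\pi^{-1}(Z_\sigma)$.

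Using the Weil pull back of Remark~\ref{rem:tci-weilpull} and adjunction, I would then set
$$
K_X(\sigma) := (K_Z(\sigma) + C_1 + \ldots + C_s)\vert_X,
\qquad
K_{X'}(\sigma) := (K_{Z'}(\sigma) + C_1' + \ldots + C_s')\vert_{X'}.
$$
These are canonical divisors on $X$ and $X'$ by Proposition~\ref{prop:adjunction}~(i), and the restrictions are legitimate because $X$ meets $Z_\reg$ inside $X_\reg$ and is transversal to the torus orbits, by Theorems~\ref{thm:nondegls2smooth} and~\ref{thm:nondeg2trop}~(i). The key toric input for the comparison is the identity $\pi^* C_j = C_j'$: both are the Cartier divisors attached to the same Newton polytope $B(f_j)$, so their support functions coincide, and reading off the coefficients $-\min_{u \in B(f_j)}\bangle{u,v_{\varrho'}}$ on the new rays (Lemma~\ref{lem:descrPolyOfHypersurfaces}) shows that the pull back of $C_j$ equals $C_j'$.

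Consequently the hypersurface terms contribute nothing to the difference, and on $\pi^{-1}(Z_\sigma) \cap X'$ one is left with
$$
K_{X'}(\sigma) - \pi^* K_X(\sigma)
\ = \
\bigl(K_{Z'}(\sigma) - \pi^* K_Z(\sigma)\bigr)\big\vert_{X'}.
$$
Since $\ddiv(\chi^{u_\sigma})$ is principal, hence Cartier, it satisfies $\pi^*\ddiv(\chi^{u_\sigma}) = \ddiv(\chi^{u_\sigma})$, so the two characters cancel and $K_{Z'}(\sigma) - \pi^* K_Z(\sigma) = K_{Z'} - \pi^* K_Z$ on $\pi^{-1}(Z_\sigma)$; restricting to $X'$ yields assertion~(ii). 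Assertion~(i) is simpler still: over $Z' \setminus Y'$ the morphism $\pi$ is an isomorphism involving only the common, non-exceptional rays together with the single character $\chi^{u_\sigma}$, so $K_{Z'}(\sigma)$ and $\pi^* K_Z(\sigma)$ literally correspond; adding the matching Cartier terms $C_j' = \pi^* C_j$ and restricting to $X' \setminus Y'$ gives $K_{X'}(\sigma) = \pi^* K_X(\sigma)$ there.

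The transparent steps are the character cancellation and the identity $\pi^* C_j = C_j'$. The main obstacle I anticipate is the divisor-theoretic bookkeeping on the possibly singular variety $X$: one must verify that restriction to $X$ commutes with the pull backs along $\pi\colon X' \to X$ for the Weil divisors at hand, and that $\pi^* K_X(\sigma)$ is genuinely defined --- which is precisely why the chart-adapted representative $K_Z(\sigma)$, trivial and hence Cartier on $Z_\sigma$, is indispensable. Making this rigorous rests on quasismoothness and on the orbit transversality of Theorems~\ref{thm:nondegls2smooth} and~\ref{thm:nondeg2trop}, which guarantee the functoriality of the pull back of Remark~\ref{rem:tci-weilpull} over the chart $Z_\sigma$.
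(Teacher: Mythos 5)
Your construction breaks down at its very first step, and in exactly the cases the proposition is designed for. The twist $K_Z(\sigma) := K_Z + \ddiv(\chi^{u_\sigma})$ only produces a canonical \emph{Weil} divisor if $u_\sigma$ is an \emph{integral} linear form, and an integral $u_\sigma$ with $\bangle{u_\sigma, v_\varrho} = 1$ on all rays $\varrho \preccurlyeq \sigma$ exists precisely when the chart $Z_\sigma$ is Gorenstein --- not merely $\QQ$-Gorenstein. For instance, $\sigma = \cone((2,1),(2,-1))$ admits only $u_\sigma = (1/2,0)$, and for $u_\sigma \in \QQ^n \setminus \ZZ^n$ the expression $\ddiv(\chi^{u_\sigma})$ is not a Weil divisor, so your $K_X(\sigma)$ is at best a $\QQ$-divisor and does not verify the assertion ``there are canonical divisors $K_X(\sigma)$, $K_{X'}(\sigma)$''. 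Note moreover that Proposition~\ref{prop:can-restr} is stated for arbitrary $F$-fans with no ($\QQ$-)Gorenstein hypothesis whatsoever --- that hypothesis enters only later, in Theorem~\ref{thm:main-1} --- and the intended applications are decidedly non-Gorenstein: in the classification of Theorem~\ref{thm:terminalWHf-new} the Gorenstein members are precisely the smooth ones, so on the fake weighted projective spaces of interest your chart-adapted representative of the canonical class simply does not exist as an integral divisor.

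The paper performs the dual move, which costs nothing: it leaves the standard invariant canonical divisors $K_Z$, $K_{Z'}$ (coefficient $-1$ along every invariant prime divisor) untouched and instead replaces each hypersurface divisor by a chart-adapted representative of its class. Concretely, one picks a vertex $u \in B$ of the Newton polytope with $\sigma \subseteq \sigma(u)$, writes $u = u_1 + \ldots + u_s$ with vertices $u_j \in B(f_j)$, and sets $D(\sigma,j) := a(u_j)_1 D_1 + \ldots + a(u_j)_r D_r$; by Reminder~\ref{rem:divisorsAndPolytopes} and Propositions~\ref{prop:basepointfree} and~\ref{prop:normfanbpfample} these effective invariant divisors satisfy $[D(\sigma,j)] = [C_j]$ and $\supp(D(\sigma,j)) \cap Z_\sigma = \emptyset$, and the same vertices $u_j$ yield $D'(\sigma,j)$ on $Z'$ with support off $\pi^{-1}(Z_\sigma)$. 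Then $K_X(\sigma) := (K_Z + \sum_j D(\sigma,j))\vert_X$ and its analogue on $X'$ are honest integral canonical divisors by Proposition~\ref{prop:adjunction}, and properties (i) and (ii) hold by construction; the only input is base point freeness of the classes $[C_j]$, which is available for every $F$-fan, instead of Gorensteinness of the charts. A smaller point: your identity $\pi^* C_j = C_j'$ is correct, but the justification via ``support functions'' of $C_j$ is off, since $C_j$ is not torus-invariant and carries none; argue instead via $C_j = \ddiv_Z(f_j) + D_j$ with $D_j$ the invariant base point free divisor attached to $B(f_j)$, whose pull back is governed by the common support function, giving $\pi^* C_j = \ddiv_{Z'}(f_j) + D_j' = C_j'$.
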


\begin{proof}
Fix $\sigma \in \Sigma_X$. 
Then there is a vertex $u \in B$
of the Newton polytope $B = B(F)$ 
such that the maximal cone 
$\sigma(u) \in \Sigma(B)$
contains $\sigma$.
Write $u = u_1 + \ldots + u_s$ 
with vertices $u_j \in B(f_j)$. 
With the corresponding vertices 
$a(u_j) = P^*{u_j} + a_j$ of 
the Newton polytopes $B(g_j)$,
we define 
$$ 
D(\sigma,j) 
\ := \
a(u_j)_1D_1 + \ldots + a(u_j)_rD_r
\ \in \ 
\WDiv(Z).
$$
Let $C_j \in \WDiv(Z)$ be the push forward
of $\ddiv(f_j)$.
Propositions~\ref{prop:basepointfree}
and~\ref{prop:normfanbpfample} together with
Lemma~\ref{lem:descrPolyOfHypersurfaces}~(v) 
tell us
$$
[D(\sigma,j)] 
 = 
[C_j]
 =  
\deg(g_j)
 \in 
K
 =  
\Cl(Z),
\qquad
\supp(D(\sigma,j)) \cap Z_{\sigma} 
 = 
\emptyset.
$$  
Also for the $\Sigma'$-homogenization 
$G'$ of $F$, the vertices $u_j \in B(f_j)$ 
yield corresponding vertices $a'(u_j) \in B(g_j')$ 
and define divisors
$$
D'(\sigma,j) 
\ := \ 
a'(u_j)_1D_1 + \ldots +  a'(u_j)_{r+l}D_{r+l}
\ \in \ 
\WDiv(Z').
$$
As above we have the push forwards 
$C'_j \in \WDiv(Z')$ of $\ddiv(f_j)$
and, by the same arguments, we obtain 
$$
[D'(\sigma,j)] 
 =  
\deg(g_j')
 \in 
K'
 =  
\Cl(Z'),
\qquad
\supp(D'(\sigma,j)) \cap \pi^{-1}(Z_{\sigma}) 
 =  
\emptyset.
$$
Take the invariant canonical divisors $K_Z$ on $Z$ 
and $K_{Z'}$ in $Z'$ with multiplicity~$-1$ along
all invariant prime divisors and set
$$
K_{X}(\sigma) \ := \ (K_{Z} + \sum_{j=1}^s D(\sigma,j)) \vert_{X},
\qquad\quad
K_{X'}(\sigma) \ := \ (K_{Z'} + \sum_{j=1}^s D'(\sigma,j)) \vert_{X'}.
$$
According to Proposition~\ref{prop:adjunction},
these are canonical divisors on $X$ and $X'$ 
respectively.
Properties~(i) and~(ii) are then clear
by construction.
\end{proof}

\begin{proof}[Proof of Theorem~\ref{thm:main-1}]
First observe that $\mathcal{A}_{X}$ is an
anticanonical complex for the toric variety $Z_X$.
Now, choose any regular refinement 
$\Sigma' \to \Sigma$ of the defining 
$F$-fan~$\Sigma$ of the irreducible 
non-degenerate toric complete intersection
$X \subseteq Z$.
This gives us modifications $\pi \colon Z' \to Z$ 
and $\pi \colon X' \to X$.
Standard toric geometry and 
Theorem~\ref{thm:nondegls2smooth}
yield that both are resolutions of 
singularities. 

Proposition~\ref{prop:can-restr}
provides us with canonical divisors
on $X'$ and $X$.
We use them to compute discrepancies.
Over each $X \cap Z_\sigma$, where
$\sigma \in \Sigma_X$, we obtain
the discrepancy divisor as
$$
K_{X'}(\sigma)-\pi^*K_X(\sigma)
\ = \ 
K_{Z'}\vert_X - \pi^*K_{Z_X}\vert_X.
$$
By Theorem~\ref{thm:nondeg2trop}~(i), every
exceptional prime divisor $E_X' \subseteq X'$
admits a unique exceptional prime divisor
$E_Z' \subseteq Z'$ with $E_X' \subseteq E_Z'$.
Remark~\ref{rem:tci-weilpull} 
guarantees that the
discrepancy of $E_X'$ with
respect to $\pi \colon X' \to X$
and that of $E_Z'$ with respect to 
$\pi \colon Z' \to Z_X$
coincide.
\end{proof}

We conclude the section by discussing the divisor
class group and the Cox ring of a non-degenerate 
complete toric intersection and the effect of
a general choice of the defining Laurent system.

\begin{proposition}
\label{prop:coxringcrit}
Consider a non-degenerate system $F = (f_1,\ldots,f_s)$ 
in $\LP(n)$, an $F$-fan $\Sigma$ in $\ZZ^n$ and
the associated toric complete intersection $X \subseteq Z$.
Assume that $\bar X \setminus \hat X$ is of codimension
at least two in $\bar X$. 
If the pullback $\Cl(Z) \rightarrow \Cl(X)$ is an isomorphism,
then the Cox ring of $X$ is given by
$$
\mathcal{R}(X)
\ = \
\KK[T_1, \ldots, T_r] / \bangle{g_1, \ldots, g_s},
\qquad
\deg(T_i) \ = \ [X_i] \ \in \ \Cl(X),
$$
where $G = (g_1,\ldots,g_s)$ is the $\Sigma$-homogenization
of $F$.
In this situation, we have moreover the following statements.
\begin{enumerate}
\item 
If $\bar X \cap V(T_i) \setminus \hat X$ is of codimension at least
two in $\bar X \cap V(T_i)$, then $T_i$ defines a prime element in
$\mathcal{R}(X)$.
\item
If $\deg(g_j) \ne \deg(T_i)$ holds for all $i,j$, then
the variables $T_1, \ldots, T_r$ define a minimal system
of generators for $\mathcal{R}(X)$.
\end{enumerate}
\end{proposition}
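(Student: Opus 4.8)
The plan is to present the good quotient $p \colon \hat X \to X$ of Construction~\ref{constr:laurent-system-zeros} as a characteristic space and then read off the Cox ring from the recognition theorem for characteristic spaces in \cite[Sec.~1.6]{ArDeHaLa}. First I would collect the structural input already at hand. By Theorem~\ref{thm:nonde2ci}~(i) and~(iv), the hypothesis that $\bar X \setminus \hat X$ has codimension at least two in $\bar X$ makes $\bar X = V(g_1, \ldots, g_s) = \Spec R$ an irreducible normal affine variety, where $R = \KK[T_1, \ldots, T_r]/\bangle{g_1, \ldots, g_s}$; since the $g_j$ are $K$-homogeneous, the $K$-grading of $\mathcal{R}(Z) = \KK[T_1, \ldots, T_r]$ descends to $R$, and through the isomorphism $\Cl(Z) \to \Cl(X)$ we regard $R$ as $\Cl(X)$-graded, with $\deg(T_i) = [D_i]$ mapping to $[D_i] \vert_X = [X_i]$.

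Next I would identify the graded ring of functions on the total coordinate space. As $\hat X = \bar X \cap \hat Z$ is open in the normal variety $\bar X$ with complement of codimension at least two, algebraic Hartogs gives $\Gamma(\hat X, \mathcal{O}) = \Gamma(\bar X, \mathcal{O}) = R$; moreover, the attractive fixed point $0 \in \bar X$ for the $H$-action, already exploited in the proof of Theorem~\ref{thm:nonde2ci}~(iv), forces $\Gamma(\hat X, \mathcal{O})^* = \KK^*$ and hence also $\Gamma(X, \mathcal{O})^* = \KK^*$. By construction $p \colon \hat X \to X$ is a good quotient for the quasitorus $H = \ker(p)$, whose character group is identified with $K = \Cl(X)$.

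The heart of the matter, and the step I expect to be the main obstacle, is to upgrade this good quotient to a characteristic space; concretely, one must exhibit an open subset $U \subseteq X$ with $\mathrm{codim}_X(X \setminus U) \ge 2$ over which $p$ is a geometric quotient with free $H$-action. I would take $U = X \cap Z_{\reg}$. Over this locus freeness and the geometric quotient property are inherited from Cox's construction of $Z$, which has these properties over $Z_{\reg}$, and by Theorem~\ref{thm:nondegls2smooth} we have $X \cap Z_{\reg} \subseteq X_{\reg}$. The delicate point is the codimension estimate: writing $X \setminus U = X \cap (Z \setminus Z_{\reg})$ and noting that the toric singular locus $Z \setminus Z_{\reg}$ is a union of orbit closures attached to cones of dimension at least two, Theorem~\ref{thm:nondeg2trop}~(i) shows that each such intersection meets $X$ in codimension at least two. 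Granting this, the recognition theorem applies: $p \colon \hat X \to X$ is a characteristic space, so its function ring is forced to be the Cox ring, the $\Cl(X)$-grading of $R$ is automatically factorial, and
$$
\mathcal{R}(X) \ = \ \Gamma(\hat X, \mathcal{O}) \ = \ R \ = \ \KK[T_1, \ldots, T_r]/\bangle{g_1, \ldots, g_s}.
$$

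It remains to treat the two supplementary assertions. For~(i), I would apply Proposition~\ref{prop:piecewiseci} to the ray $\sigma = \varrho_i$, for which $\bar Z(\sigma) = V(T_i)$ and $\bar X(\sigma) = \bar X \cap V(T_i)$; here the hypothesis that $\bar X \cap V(T_i) \setminus \hat X$ has codimension at least two in $\bar X \cap V(T_i)$ is precisely the input needed (the requirement on $\hat X(\sigma)$ follows once one observes, via Proposition~\ref{prop:differential}~(iii), that the contributions from the deeper orbits drop dimension), so that $\bar X \cap V(T_i)$ is irreducible with vanishing ideal $\bangle{g_1^{\varrho_i}, \ldots, g_s^{\varrho_i}}$. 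Consequently $R/\bangle{T_i} \cong \KK[T_j; \, j \ne i]/\bangle{g_1^{\varrho_i}, \ldots, g_s^{\varrho_i}}$ is an integral domain and $T_i$ is prime. For~(ii), I would use the attractive fixed point $0 \in \bar X$, which supplies a one-parameter subgroup of $H$ acting with positive weights on all $T_i$; thus $R$ is positively graded with $R_0 = \KK$ and irrelevant ideal $\mathfrak{m} = \bangle{T_1, \ldots, T_r}$, and graded Nakayama identifies minimal homogeneous generators with a $\KK$-basis of $\mathfrak{m}/\mathfrak{m}^2$. A variable $T_i$ could only be redundant if some relation contributed a linear term $c T_i$, which would force $\deg(g_j) = \deg(T_i)$. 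Since each $g_j$ is $K$-homogeneous with $\deg(g_j) = [X_j] \ne 0$ and, by assumption, $\deg(g_j) \ne \deg(T_i)$ for all $i$, every $g_j$ lies in $\mathfrak{m}^2$; hence the classes of $T_1, \ldots, T_r$ are linearly independent in $\mathfrak{m}/\mathfrak{m}^2$ and $T_1, \ldots, T_r$ form a minimal system of generators.
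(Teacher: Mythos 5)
Your proposal is correct and follows essentially the same route as the paper: the paper's proof simply invokes Theorem~\ref{thm:nonde2ci}~(iv) for normality of $\bar X$ together with the citation \cite[Cor.~4.1.1.5]{ArDeHaLa}, handles~(i) via Proposition~\ref{prop:piecewiseci} exactly as you do, and declares~(ii) clear. What you have done is unpack that citation into its underlying proof --- verifying the hypotheses of the characteristic-space recognition theorem of \cite[Sec.~1.6]{ArDeHaLa} (units via the attractive fixed point, Hartogs on $\bar X \setminus \hat X$, and the codimension-two estimate over $Z \setminus Z_{\reg}$ via Theorem~\ref{thm:nondeg2trop}~(i)) --- and to spell out the graded Nakayama argument behind~(ii); these verifications are correct and match the intended content of the cited corollary.
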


\begin{proof}
According to Theorem~\ref{thm:nonde2ci}~(iv) ensures
that $\bar X$ is normal.
This allows us to apply~\cite[Cor.~4.1.1.5]{ArDeHaLa},
which shows that the Cox ring $\mathcal{R}(X)$ is
as claimed.
The supplementary assertion~(i) is a
consequence of Proposition~\ref{prop:piecewiseci}
and~(ii) is clear.
\end{proof}

\begin{definition}
\label{def:general}
Let $B_1, \ldots, B_s \subseteq \QQ^n$ be 
integral polytopes.
The \emph{Laurent space} associated with
$B_1, \ldots, B_s$ is the finite-dimensional
vector space
$$
V(B_1, \ldots, B_s)
\ := \
\bigoplus_{j=1}^s \KK[T^{\nu}; \, \nu \in B_j \cap \ZZ^n].
$$
Given a non-empty open set $U \subseteq V(B_1, \ldots, B_s)$,
we refer to the elements $F \in U$ and also to the
possible associated toric complete intersections as
\emph{$U$-general}.
\end{definition}

Following common (ab)use, we say ``the general Laurent system
$F = (f_1, \ldots,f_s)$ in $\LP(n)$ satisfies ...''
if we mean  ``there is a $U \subseteq V(B_1,\ldots,B_s)$
such that every $U$-general $F$ satisfies ...'', where
$B_j$ denotes the Newton polytope of $f_j$ for $j = 1, \ldots, s$.
By~\cite[Thm.~2.2]{Kh}, the general Laurent system
is non-degenerate.

\begin{corollary}
\label{cor:general2SrRa}
Let $F = (f_1,\ldots,f_s)$ be a general Laurent system
in $\LP(n)$ and~$\Sigma$ an $F$-fan in $\ZZ^n$.
For the associated toric complete intersection 
$X \subseteq Z$ assume
$$
\dim(\bar Z \setminus \hat Z)
\ \le \
r-s-2,
\qquad\qquad
\dim(X) \ \ge \ 3.
$$
Then  the variety $X$ is irreducible and normal, 
the pullback $\Cl(Z) \to \Cl(X)$ is an isomorphism
and the Cox ring of $X$ is given as
$$
\mathcal{R}(X)
 = 
\KK[T_1, \ldots, T_r] / \bangle{g_1, \ldots, g_s},
\qquad
\deg(T_i)  =  [D_i] \ \in \ \Cl(X)  = \Cl(Z),
$$
where $G = (g_1,\ldots,g_s)$ is the $\Sigma$-homogenization
of $F = (f_1,\ldots,f_s)$ and $D_i \subseteq Z$ the toric prime divisor
corresponding to 
$T_i \in \mathcal{R}(Z) = \KK[T_1, \ldots, T_r]$.
\end{corollary}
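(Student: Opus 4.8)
The plan is to verify the hypotheses of Proposition~\ref{prop:coxringcrit} and, separately, to prove that the pullback $\Cl(Z) \to \Cl(X)$ is an isomorphism; the latter is where the genericity of $F$ and the bound $\dim(X) \ge 3$ are really used. By \cite{Kh} the general $F$ is non-degenerate, so all earlier results of the section apply to it. First I would settle irreducibility and normality. Since $\hat X = \bar X \cap \hat Z$, we have $\bar X \setminus \hat X = \bar X \cap (\bar Z \setminus \hat Z)$, so the assumption $\dim(\bar Z \setminus \hat Z) \le r-s-2$ together with the pure dimension $r-s$ of $\bar X$ from Theorem~\ref{thm:nonde2ci}~(i) shows that $\bar X \setminus \hat X$ is of codimension at least two in $\bar X$. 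Theorem~\ref{thm:nonde2ci}~(iv) then gives that $\bar X$ is irreducible and normal and that $X$ is irreducible, while Theorem~\ref{thm:nondegls2smooth} yields normality of $X$. This simultaneously verifies the standing codimension hypothesis of Proposition~\ref{prop:coxringcrit}.

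It remains to prove the isomorphism $\Cl(Z) \to \Cl(X)$; granting it, Proposition~\ref{prop:coxringcrit} delivers the asserted presentation of $\mathcal{R}(X)$, and the degree identifications $\deg(T_i) = [D_i]$ under $\Cl(X) = \Cl(Z)$ are then immediate by transporting the classes through the isomorphism. I would reformulate the class group statement via Cox's construction. Since $\mathcal{R}(Z) = \KK[T_1, \ldots, T_r]$ is the Cox ring of $Z$, since $p \colon \hat X \to X$ is a characteristic space with $\bar X \setminus \hat X$ of codimension at least two, and since $\bar X$ is an affine cone with attractive fixed point $0$ (so its only global units are constants), the theory of \cite[Sec.~4.1]{ArDeHaLa} tells us that the pullback $\Cl(Z) \to \Cl(X)$ is an isomorphism precisely when $\bar X$ is factorial, that is $\Cl(\bar X) = 0$; note that torsion in $\Cl(Z)$ is no obstruction here, as it is produced by the finite part of $H$ and not by $\Cl(\bar X)$. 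Thus the entire task reduces to showing that $\bar X$ is factorial for general $F$.

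The hard part will be exactly this factoriality, and it is where the hypothesis $\dim(X) \ge 3$ is consumed. Quasismoothness (Theorem~\ref{thm:nondegls2smooth}) confines the singular locus of $\bar X$ to $\bar X \setminus \hat X$, and for general $F$ a Bertini argument, controlled piecewise through Proposition~\ref{prop:piecewiseci}, keeps the strata $\bar X(\sigma)$ at their expected codimension; however, bounding the codimension of the singular locus alone does not force factoriality. The genuine input is a Noether--Lefschetz type statement. Concretely, I would pass to the smooth locus $\hat X \subseteq \hat Z$, observe that $\bar Z \setminus \hat Z$ has codimension at least $s+2$ in $\KK^r$ so that $\Cl(\hat Z) = \Cl(\KK^r) = 0$, and then invoke the Grothendieck--Lefschetz theorem for class groups of general complete intersection subvarieties to conclude $\Cl(\hat X) = 0$; here $\hat X$ arises from $\hat Z$ by cutting with the $s$ base point free divisor classes of $g_1, \ldots, g_s$ (base point free by Lemma~\ref{lem:descrPolyOfHypersurfaces}~(v)), and the dimension threshold $\dim(X) \ge 3$ guarantees $\dim(\hat X) = r-s \ge 3$, the range in which the general section preserves the class group. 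Since $\bar X \setminus \hat X$ is of codimension at least two, this upgrades to $\Cl(\bar X) = \Cl(\hat X) = 0$, i.e.\ $\bar X$ factorial, as required.

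The main obstacle, therefore, is not the reduction steps but securing this Lefschetz input in the present setting: the ambient $\hat Z$ is only quasi-affine rather than projective, and the cut-out divisor classes are base point free but not necessarily ample, so a naive appeal to the classical hyperplane-section form of Grothendieck--Lefschetz is not available. The genericity of $F$ is indispensable precisely to run the Noether--Lefschetz argument for general (bpf) members and to keep the intermediate loci normal of the expected dimension throughout the $s$-fold section. Once that is in place, the three conclusions---irreducibility and normality of $X$, the isomorphism $\Cl(Z) = \Cl(X)$, and the Cox ring presentation---follow as described.
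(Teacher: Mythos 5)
Your first paragraph is sound and matches the paper: the hypothesis $\dim(\bar Z \setminus \hat Z) \le r-s-2$ together with the pure dimension $r-s$ of $\bar X$ from Theorem~\ref{thm:nonde2ci}~(i) gives $\bar X \setminus \hat X$ of codimension at least two, so Theorem~\ref{thm:nonde2ci}~(iv) and Theorem~\ref{thm:nondegls2smooth} settle irreducibility and normality, and all hypotheses of Proposition~\ref{prop:coxringcrit} are in place except the class-group isomorphism. The gap is that you never actually prove that isomorphism, and both halves of your strategy for it are defective. First, your reduction ``$\Cl(Z) \to \Cl(X)$ is an isomorphism precisely when $\bar X$ is factorial'' is not the criterion the theory of \cite[Sec.~4.1]{ArDeHaLa} supplies: the characterization of Cox rings runs through \emph{factorial gradedness} of $\mathcal{R}(X)$ (equivalently, $H$-factoriality of $\bar X$), which for grading groups with torsion is strictly weaker than $\Cl(\bar X)=0$; your parenthetical assertion that torsion is ``no obstruction'' substitutes for an argument exactly where the claimed equivalence can break, and demanding outright factoriality is in any case more than the corollary requires. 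Second, and decisively, the Lefschetz input you need --- a Grothendieck--Lefschetz statement for general sections of merely base point free classes on the quasi-affine variety $\hat Z$ --- is left unproven; you concede yourself that no applicable off-the-shelf theorem exists in that setting. A proposal whose self-declared ``main obstacle'' is unresolved does not establish the statement.

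The paper closes exactly this point differently, working downstairs and stepwise rather than upstairs in one stroke; see Remark~\ref{rem:general2SrRa}. One sets $X_0' := Z$ and $X_j' := X_{j-1}' \cap X_j$, so that each $X_j'$ is again a non-degenerate toric complete intersection and $X_j \vert_{X_{j-1}'}$ is a base point free divisor on the normal \emph{projective} variety $X_{j-1}'$ by Lemma~\ref{lem:descrPolyOfHypersurfaces}~(v). The Grothendieck--Lefschetz theorem of Ravindra--Srinivas \cite{RaSr}, formulated precisely for normal projective varieties and general members of such linear systems, then yields a pullback isomorphism $\Cl(X_{j-1}') \to \Cl(X_j')$ at every step; the hypothesis $\dim(X) \ge 3$ ensures each ambient $X_{j-1}'$ in the induction has dimension at least four, the range in which \cite{RaSr} gives an isomorphism. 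The remaining subtlety --- why a general $F$ produces a \emph{general member} of the linear system of $X_j \vert_{X_{j-1}'}$ at each step --- is handled by applying Proposition~\ref{prop:coxringcrit} inductively to compute $\mathcal{R}(X_{j-1}')_{[X_j \vert_{X_{j-1}'}]} = \mathcal{R}(Z)_{[X_j]} / \bangle{g_1, \ldots, g_{j-1}}_{[X_j]}$, which identifies the linear system with the projectivization of a homogeneous component of the Cox ring, so genericity of the system $G$ transfers to genericity of each cut. This stepwise projective mechanism is what your proposal is missing; without it, or a proved substitute for your quasi-affine Noether--Lefschetz claim, the central assertion $\Cl(Z) \cong \Cl(X)$ remains unestablished.
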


The proof of this Corollary is
covered by the subsequent two remarks, which we
formulate separately as they touch
aspects of independent interest.

\begin{remark}
Let $F = (f_1,\ldots,f_s)$ be a Laurent system
in $\LP(n)$ and~$\Sigma$ an $F$-fan in $\ZZ^n$.
Then Lemma~\ref{lem:descrPolyOfHypersurfaces}~(ii)
tells us that $F$ is general if and only if its
$\Sigma$-homogenization $G$ is general. 
\end{remark}

The second remark shows in particular
that the easy-to-check assumption
$\dim(\bar Z \setminus \hat Z) \le r-s-2$
might even be weakened.

\begin{remark}
\label{rem:general2SrRa}
Consider a toric variety~$Z$ and a non-degenerate toric 
complete intersection $X = X_1 \cap \ldots \cap X_s$ 
in~$Z$ of dimension at least three.
Then $X$ is constructed by passing stepwise to 
hypersurfaces: 
$$
X_0' \ := \ Z,
\qquad  
X_{j}' \ := \ X_{j-1}' \cap X_{j} \ \subseteq \ Z,
\qquad 
j = 1, \ldots, s.
$$ 
Then $X = X_s'$ and each $X_j'$ is a non-degenerate
toric complete intersection in $Z$. 
In each step, $X_j \vert_{X_{j-1}'}$ is a base point free 
divisor on $X_{j-1}'$; see~Lemma~\ref{lem:descrPolyOfHypersurfaces}.
The Grothendieck-Lefschetz Theorem 
from~\cite{RaSr} provides us with  
a pullback isomorphism 
$$ 
\Cl(X_{j-1}') \ \to \ \Cl(X_j')
$$
for a general choice of $X_j \vert_{X_{j-1}'}$
with respect to its linear system.
In the initial step, the linear system of $X_1$ 
is just the projective space over the corresponding 
homogeneous component of the Cox ring, that 
means that we have
$$ 
\vert X_1 \vert 
\ = \ 
\PP \, \mathcal{R}(Z)_{\left[ X_1 \right]}.  
$$
Now consider a general  
$X = X_1 \cap \ldots \cap X_s \subseteq Z$
and suppose that $\bar X_j' \setminus \hat X_j'$ 
is of codimension at least two in $\bar X_j'$
in each step. 
Then we may apply Proposition~\ref{prop:coxringcrit} 
stepwise, where in each step we observe 
$$ 
\mathcal{R}(X_{j-1}')_{\bigl[X_j \vert_{X_{j-1}'}\bigr]}  
\ = \ 
\mathcal{R}(Z)_{\left[X_j\right]}
/
\bangle{g_1, \ldots, g_{j-1}}_{\left[X_j\right]} .
$$
Thus the general choice of 
$X_1 \cap \ldots \cap X_s \subseteq Z$ induces
a general choice of the divisor $X_j \vert_{X_{j-1}'}$
on $X_{j-1}'$ in each step.  
In particular, we obtain $\Cl(Z) = \Cl(X)$ and see
that the statements of 
Proposition~\ref{prop:coxringcrit} apply to
$X \subseteq Z$.
\end{remark}

\begin{example}
\label{ex:gentype}
Corollary~\ref{cor:general2SrRa} enables us
to produce Mori dream spaces with prescribed
properties. For instance, consider general
toric hypersurfaces
$$
X
\ = \
V(f)
\ \subseteq \
\PP_{1,1,2} \times \PP_{1,1,2}
\ = \
Z,
$$
where $f$ is $\ZZ^2$-homogeneous of bidegree $(d_1,d_2)$ with
$d_1,d_2 \in \ZZ_{\ge 1}$.
Corollary~\ref{cor:general2SrRa} directly
yields $\Cl(X) = \ZZ^2$ and delivers the Cox ring as
$$
\mathcal{R}(X)
\ = \
\KK[T_0,T_1,T_2,S_0,S_1,S_2] / \bangle{f},
\qquad
\begin{array}{rr}
w_0=w_1=(1,0), & w_2=(2,0),
\\
u_0=u_1=(0,1), & u_2=(0,2),
\end{array}                   
$$
where $w_i = \deg(T_i)$ and $u_i = \deg(S_i)$.
Corollary~\ref{cor:acc-singu} tells
us that $X$ has worst canonical singularities.
Moreover, if for instance $d_1 = d_2 = d$,
then in the cases
$$
d > 4, \qquad
d = 4, \qquad
d < 4,
$$
the Mori dream space $X$ is of general type, 
satisfies $\mathcal{K}_X = 0$ or is Fano,
accordingly; use Proposition~\ref{prop:adjunction}.
\end{example}

\section{Fake weighted terminal Fano threefolds}

Here we prove Theorem~\ref{thm:terminalWHf-new}.
The first and major part uses the whole theory
developed so far to establish suitable upper
bounds on the specifying data.
Having reduced the problem to working out a
managable number of cases, we proceed
computationally, which involves besides a
huge number of divisibility checks the
search for lattice points inside polytopes
tracing back to the terminality criterion
provided in Corollary~\ref{cor:acc-singu}.
A second and minor part concerns the verifying and
distinguishing items listed in
Theorem~\ref{thm:terminalWHf-new},
where we succeed with Corollary~\ref{cor:general2SrRa}
and the computation of suitable invariants.

We fix the notation around a non-degenerate complete
intersection $X$ in an $n$-dimensional fake weighted
projective space $Z$. 
The defining fan of $\Sigma$ in $\ZZ^n$ is simplical, 
complete and we denote its primitive generators 
by $v_0,\ldots, v_n$. 
The divisor class group $\Cl(Z)$ is of the form
$$
\Cl(Z) 
\ = \ 
\ZZ \times \ZZ / t_1 \ZZ \times \ldots \times \ZZ / t_q \ZZ.
$$
By $w_i = (x_i, \eta_{i1}, \ldots, \eta_{iq}) \in \Cl(Z)$
we denote the classes of the torus invariant prime divisors
$D_i$ on $Z$.
Recall that any $n$ of $w_0, \ldots, w_n$ generate
$\Cl(Z)$.
Moreover, as the $\Cl(Z)$-grading on $\mathcal{R}(Z)$ is
pointed, we may assume
$$
0 
\ < \ 
x_0 
\ \le \ \ldots \ \le \ 
x_n. 
$$
As before, $X \subseteq Z$ arises from a Laurent system~$F$ 
in $\LP(n)$ and $\Sigma$ is an $F$-fan.
We denote by $G = (g_1,\ldots,g_s)$ the $\Sigma$-homogenization
of $F = (f_1,\ldots, f_s)$.
Recall that the $\Cl(Z)$-degree
$\mu_j = (u_j, \zeta_{j1}, \ldots, \zeta_{jq})$
of $g_j$ is base point free.

\begin{lemma}
\label{lem:fwpsfromlaurent}
A divisor class $[D] \in \Cl(Z)$ is base point free if 
and only if for any $i = 0, \ldots, n$ there exists 
an $l_i \in \ZZ_{\ge 1}$ with $[D] = l_i w_i \in \Cl(Z)$.
\end{lemma}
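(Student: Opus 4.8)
The plan is to characterize base point freeness of $[D]$ by testing it at the finitely many torus fixed points of $Z$ and then to translate each such test, via Proposition~\ref{prop:basepointfree}, into the asserted condition on the $w_i$. First I would reduce to the fixed points. By Proposition~\ref{prop:basepointfree}, the base locus of an effective representative of $D$ is $\TT^n$-invariant and closed; since $Z$ is complete, a nonempty $\TT^n$-invariant closed subset is a union of orbit closures and hence contains a torus fixed point. Therefore $[D]$ is base point free if and only if none of the fixed points of $Z$ is a base point of $D$. As $Z$ is a fake weighted projective space, its defining fan is the complete simplicial fan whose rays pass through $v_0, \ldots, v_n$ and whose maximal cones are exactly $\sigma_i := \cone(v_j; \ j \ne i)$ for $i = 0, \ldots, n$; correspondingly, the fixed points are the distinguished points $z_{\sigma_i}$.

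Next I would apply Proposition~\ref{prop:basepointfree} at each $\sigma_i$. Since $v_k \in \sigma_i$ holds precisely for $k \ne i$, the criterion states that $z_{\sigma_i}$ fails to be a base point of $D$ if and only if $[D]$ is linearly equivalent to an effective toric divisor whose only possibly nonzero coefficient sits at $D_i$, that is, $[D] = l_i w_i$ for some integer $l_i \ge 0$. Running this over all $i = 0, \ldots, n$ and using that these cones exhaust the maximal cones of $\Sigma$, I obtain that $[D]$ is base point free if and only if $[D] \in \ZZ_{\ge 0}\, w_i$ for every $i$.

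Finally I would upgrade $l_i \ge 0$ to $l_i \ge 1$, and here the pointedness of the grading enters: each $w_i$ has strictly positive first coordinate $x_i > 0$, so for a nonzero base point free class the relation $[D] = l_i w_i$ can only hold with $l_i \ge 1$. The main things to get right are the passage from an arbitrary invariant base locus to a fixed point and the explicit list of maximal cones of a fake weighted projective space; once these are in place, the equivalence is a direct reading of Proposition~\ref{prop:basepointfree}, with the positivity of the $x_i$ supplying the strict inequality $l_i \ge 1$.
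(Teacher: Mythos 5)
Your proof is correct and follows essentially the same route as the paper, which simply cites Proposition~\ref{prop:basepointfree} together with the fact that the maximal cones of $\Sigma$ are $\cone(v_j;\ j \ne i)$ for $i = 0, \ldots, n$. Your two additions — the explicit reduction of base point freeness to the torus fixed points via completeness, and the observation that pointedness of the grading ($x_i > 0$) upgrades $l_i \ge 0$ to $l_i \ge 1$ for a nonzero class — are exactly the details the paper leaves implicit in calling the lemma a ``direct consequence.''
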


\begin{proof}
This is a direct consequence of
Proposition~\ref{prop:basepointfree}
and the fact that the maximal cones of $\Sigma$ are given by
$\cone(v_j; \ j \ne i)$ for $i = 0, \ldots, n$.
\end{proof}

The following lemma provides effective
bounds on the orders $t_1, \dotsc, t_q$ 
of the finite cyclic components of $\Cl(Z)$
in terms of the $\ZZ$-parts $x_i$ of the generator 
degrees $w_0, \ldots, w_n$ and $u_j$ of 
the relation degrees $\mu_1, \ldots, \mu_s$
for any toric complete intersection $X$ 
in a weighted projective space~$Z$ with 
$x_0 =1$.

\begin{lemma}
\label{lem:torsbound}
Assume $x_0 = 1$. Moreover, let 
$\mu = (u, \zeta_1, \ldots, \zeta_q) \in \Cl(Z)$
be a base point free divisor class. 
Then, for any $k = 1, \ldots, q$ and 
$j = 1, \ldots, n$, we have
$$
t_k 
\ \mid \ 
\lcm \left(\frac{u}{x_i}; \ i = 1, \ldots, n, \ i \ne j \right).
$$
In particular all $t_k$ divide $u$. Moreover, for the 
$\ZZ$-parts~$u_j$ of the relation degrees~$\mu_j$, 
we see that each of 
$t_1, \ldots, t_q$ divides $\gcd(u_1, \ldots, u_s)$.
\end{lemma}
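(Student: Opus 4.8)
The plan is to use the hypothesis $x_0 = 1$ to split off the free part of $\Cl(Z)$ and reduce the assertion to a statement about the torsion subgroup $T := \ZZ/t_1\ZZ \times \cdots \times \ZZ/t_q\ZZ$. First I would feed the base point free class $\mu$ into Lemma~\ref{lem:fwpsfromlaurent}: for each $i = 0, \ldots, n$ there is $l_i \in \ZZ_{\ge 1}$ with $\mu = l_i w_i$. Comparing the free parts and using $x_0 = 1$ forces $l_i = u/x_i$, so in particular each $x_i$ divides $u$ and $l_0 = u$. I would then pass to the torsion-corrected classes $w_i' := w_i - x_i w_0$; since their free part is $x_i - x_i = 0$, every $w_i'$ lies in $T$.

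Next I would pin down the orders. From $\mu = u w_0 = (u/x_i) w_i$ one computes $(u/x_i)\, w_i' = (u/x_i) w_i - u w_0 = \mu - \mu = 0$, so $\mathrm{ord}(w_i')$ divides $u/x_i$ for $i = 1, \ldots, n$. The combinatorial input now enters: any $n$ of $w_0, \ldots, w_n$ generate $\Cl(Z)$, so upon fixing $j \in \{1, \ldots, n\}$ the classes $w_0, w_1, \ldots, \widehat{w_j}, \ldots, w_n$ generate $\Cl(Z)$. Because $j \ge 1$ keeps $w_0$ in this set and $w_i' = w_i - x_i w_0$, passing to $\{w_0\} \cup \{w_i'; \ 1 \le i \le n, \ i \ne j\}$ is an elementary change of generating set, so this set generates $\Cl(Z) = \ZZ \times T$ as well.

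Finally I would project onto $T$. As $w_0$ has free part $1$ and each $w_i'$ is pure torsion, the only way $\{w_0\} \cup \{w_i'; \ i \ne j\}$ can generate $\ZZ \times T$ is for $\{w_i'; \ 1 \le i \le n, \ i \ne j\}$ to generate $T$ already. Writing $L := \lcm(u/x_i; \ i = 1, \ldots, n, \ i \ne j)$, each such generator $w_i'$ has order dividing $u/x_i$, hence dividing $L$, so $L \cdot T = 0$ and the exponent of $T$ divides $L$; in particular $t_k \mid L$, which is the claimed divisibility. Since $x_i \mid u$ gives $u/x_i \mid u$ and therefore $L \mid u$, I obtain $t_k \mid u$, and applying this to each base point free relation degree $\mu_j$ (with free part $u_j$) yields $t_k \mid u_j$ for all $j$, whence $t_k \mid \gcd(u_1, \ldots, u_s)$. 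The step I expect to require the most care is the generation argument: one must verify that deleting $w_j$ still leaves the torsion part of $\Cl(Z)$ generated by the corrected classes $w_i'$, which is exactly where the hypothesis that $n$ of the $w_i$ generate $\Cl(Z)$ is indispensable; the remaining order and lcm computations are routine.
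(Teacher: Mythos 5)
Your proof is correct and takes essentially the same route as the paper's: both invoke Lemma~\ref{lem:fwpsfromlaurent} to obtain $\mu = l_i w_i$ with $l_i = u/x_i$, both use that any $n$ of $w_0, \ldots, w_n$ generate $\Cl(Z)$ (retaining $w_0$, as $j \ge 1$), and both conclude by annihilating torsion with the lcm. Your torsion-corrected classes $w_i' = w_i - x_i w_0$ are just the paper's normalization of $w_0$ to $(1, \bar 0, \ldots, \bar 0)$ (possible since $x_0 = 1$) in disguise; that the paper then tests generation only against the single element $(1, \bar 1, \ldots, \bar 1)$, while you show the $w_i'$ generate the whole torsion subgroup, is a cosmetic difference.
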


\begin{proof}
Due to $x_0 = 1$, we may assume
$\eta_{11} = \ldots = \eta_{1q} = 0$.
Lemma~\ref{lem:fwpsfromlaurent} delivers
$l_i \in \ZZ_{\ge 1}$ with $\mu = l_iw_i$.
For $i = 0, \ldots, n$ that means  
$$
(l_0,0,\ldots,0)
\ = \
l_1w_1
\ = \
\mu
\ = \
l_iw_i
\ = \
(l_ix_i,l_i\eta_{i1},\ldots,l_i\eta_{iq}).
$$
Thus, we always have $u = l_ix_i$ and
$l_i \eta_{ik} = 0$.
Now, fix $1 \le j \le n$.
As any $n$ of the $w_i$ generate $\Cl(Z)$,
we find $\alpha \in \ZZ^{n+1}$ with
$\alpha_j = 0$ and 
$$
\alpha_0 w_0 + \ldots + \alpha_n w_n
\ = \
(1, \bar 1, \ldots, \bar 1).
$$
Scalar multiplication of both sides with
$\lcm(l_i; \ 1 \le i \le n, \ i \ne j)$
gives the first claim.
The second one is clear.
\end{proof}

The next bounding lemma uses terminality. 
Given $\sigma \in \Sigma$, let $I(\sigma)$ 
be the set of indices such that the~$v_i$ 
with $i \in I(\sigma)$ are precisely
the primitive ray generators of~$\sigma$
and $u_\sigma \in \QQ^n$ a linear form
evaluating to~$-1$ on each $v_i$ with 
$i \in I(\sigma)$. As before, we look at    
$$ 
A(\sigma) 
\ := \ 
\{v \in \sigma; \ 0 \ge \bangle{u_\sigma,v} \ge -1\}
\ \subseteq \
\sigma.
$$
The point $z_\sigma \in Z$ is at most a terminal 
singularity of $Z$ if and only if $0$ and the 
$v_i$ with $i \in I(\sigma)$ are the only lattice 
points in~$A(\sigma)$.  
According to Theorem~\ref{thm:main-1}, the analogous
statement holds for the points $x \in X$ with 
$x \in \TT^n \cdot z_\sigma$.

\goodbreak

\begin{lemma}
\label{lem:terminalprops}
Consider $\sigma \in \Sigma$ such that $z_\sigma \in Z$ 
is at most a terminal singularity of $Z$.
\begin{enumerate}
\item
If $\sigma$ is of dimension two, then $\sigma$ 
is a regular cone and $\Cl(Z)$ is generated 
by the $w_i$ with $i \not\in I(\sigma)$.
In particular, $\gcd(x_i; \ i \not\in I(\sigma)) = 1$ 
holds.
\item
If $\sigma$ is of dimension at least two, then
$\gcd(x_i; \ i \not\in I(\sigma))$ is strictly less 
than the sum over all $x_i$ with $i \in I(\sigma)$.
\end{enumerate}
\end{lemma}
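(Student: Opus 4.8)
The plan is to read both statements directly off the terminality criterion: the point $z_\sigma \in Z$ is at most terminal exactly when $A(\sigma)$ contains no lattice points besides $0$ and the primitive generators $v_i$, $i \in I(\sigma)$. I will treat the two parts separately but with the same underlying idea, namely that the absence of extra lattice points forces the generators of $\sigma$ to be part of a lattice basis.

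For part~(i), suppose $\sigma$ is two-dimensional with primitive generators $v_a, v_b$, so $I(\sigma) = \{a,b\}$. The linear form $u_\sigma$ satisfies $\langle u_\sigma, v_a\rangle = \langle u_\sigma, v_b\rangle = -1$, and $A(\sigma)$ is the quadrilateral with vertices $0, v_a, v_b$ and $v_a + v_b$ (the latter lying on the line $\langle u_\sigma, \cdot\rangle = -2$, hence outside $A(\sigma)$, but the parallelogram spanned by $v_a,v_b$ meets the slab $0 \ge \langle u_\sigma,\cdot\rangle \ge -1$ precisely in the triangle-like region whose only candidate interior or boundary lattice points come from the fundamental parallelogram). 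The key observation is that terminality forbids any lattice point in this region other than $0, v_a, v_b$; by the standard two-dimensional argument this means $v_a, v_b$ span the full lattice $\mathbb{Z}^n \cap \lin(\sigma)$ as a basis, i.e.\ $\sigma$ is regular. I would then lift this to a statement about $\Cl(Z)$: regularity of $\sigma$ means the two columns $v_a,v_b$ of $P$ extend to a $\mathbb{Z}$-basis, which dualizes to the claim that the degrees $w_i$ with $i \notin I(\sigma)$ generate $K = \Cl(Z)$. Finally, the $\mathbb{Z}$-parts: since the $w_i$ with $i \notin I(\sigma)$ generate $\Cl(Z)$, their images under the projection $\Cl(Z) \to \mathbb{Z}$ generate $\mathbb{Z}$, which is exactly $\gcd(x_i;\ i \notin I(\sigma)) = 1$.

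For part~(ii), with $\sigma$ of dimension $d \ge 2$ and generators $v_i$, $i \in I(\sigma)$, I would test the single lattice point $w := \sum_{i \in I(\sigma)} v_i$. It satisfies $\langle u_\sigma, w\rangle = -d \le -2$, so $w \notin A(\sigma)$; terminality gives no information directly about $w$, but the barycentric point or a suitable scaling of it does. The cleaner route is: terminality of $\sigma$ forces the sublattice generated by $\{v_i : i \in I(\sigma)\}$ to have index controlled so that the only lattice point of $A(\sigma)$ on the primitive generators themselves are accounted for, and translating this multiplicity statement through $P$ and $Q$ yields a relation among the $x_i$. Concretely I expect the inequality $\gcd(x_i;\ i\notin I(\sigma)) < \sum_{i \in I(\sigma)} x_i$ to follow by exhibiting, from the generation property forced by terminality, an integer combination realizing the gcd on the left as a strictly smaller quantity than the right-hand sum; the strictness comes from the fact that $w$ itself is not permitted as a new vertex of the anticanonical region, forcing a proper inequality rather than equality.

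The main obstacle I anticipate is part~(ii): unlike part~(i), it is not a clean regularity statement but a genuine numerical inequality, and pinning down exactly which lattice point of $A(\sigma)$ (or which integral relation among the $w_i$) delivers the strict bound requires care. In particular I will need to verify that the forbidden lattice point witnessing terminality translates, under the degree map $Q$, into the asserted comparison of $\gcd(x_i;\ i\notin I(\sigma))$ against $\sum_{i \in I(\sigma)} x_i$, and that the inequality is strict rather than merely $\le$; I expect this strictness to be exactly the content of excluding the one extra lattice point that a non-terminal cone of this dimension would contribute.
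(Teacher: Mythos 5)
Your part~(i) is essentially right, and it matches what the paper leaves as a direct verification: the standard two-dimensional folding argument (a nonzero lattice point of the fundamental parallelogram of $v_a,v_b$, or its reflection $v_a+v_b-v$, lands in the slab $0 \ge \langle u_\sigma,\cdot\rangle \ge -1$) shows a terminal two-dimensional cone is regular, and regularity of $\sigma$ is equivalent to surjectivity of $u \mapsto (\langle u,v_i\rangle)_{i \in I(\sigma)}$, which dualizes to the statement that the $w_i$ with $i \notin I(\sigma)$ generate $\Cl(Z)$; projecting to the free part gives the gcd claim.

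Part~(ii), however, has a genuine gap: you never produce the lattice point witnessing the inequality, and the candidate you name, $w = \sum_{i \in I(\sigma)} v_i$, cannot work --- as you yourself concede, terminality says nothing about it, and its coefficients are unrelated to the numbers $x_i$ with $i \notin I(\sigma)$, so no argument based on $w$ alone can see the gcd on the left-hand side. The missing idea is the linear relation $x_0v_0 + \ldots + x_nv_n = 0$, which holds because the form $(x_0,\ldots,x_n)$ is the free part of the degree map $Q$ and hence vanishes on $\im(P^*)$. This relation lets one write
$$
v' \ := \ \sum_{i \in I(\sigma)} x_i v_i \ = \ -\sum_{i \notin I(\sigma)} x_i v_i,
$$
so that $g := \gcd(x_i;\ i \notin I(\sigma))$ divides $v'$ in $\ZZ^n$, and $v := v'/g$ is an integral point. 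Since all $x_i \ge 1$, the point $v$ lies in $\sigma^\circ$, and since $\dim(\sigma) \ge 2$ it is neither $0$ nor any of the $v_i$; terminality of $z_\sigma$ therefore forces $v \notin A(\sigma)$, that is $\langle u_\sigma, v\rangle < -1$. Computing $\langle u_\sigma, v\rangle = -g^{-1}\sum_{i \in I(\sigma)} x_i$ yields exactly $g < \sum_{i \in I(\sigma)} x_i$, with the strictness built in because $v$ is excluded from $A(\sigma)$. Your alternative musing about controlling the index of the sublattice generated by the $v_i$, $i \in I(\sigma)$, points in the wrong direction: the asserted inequality compares a gcd of the \emph{complementary} weights with the sum of the weights on $\sigma$, and only the global relation among all $n+1$ primitive generators ties these two sides together.
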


\begin{proof}
The first assertion can easily be verified directly.
We turn to the second one.
Using $x_i \in \ZZ_{\ge 1}$ and $x_0 v_0 + \ldots + x_n v_n = 0$,
we obtain
$$
v' 
\ \coloneqq \ 
- \sum_{i \notin I(\sigma)} x_i v_i 
\ = \ 
\sum_{i \in I(\sigma)} x_i v_i 
\ \in \ 
\sigma^\circ \cap \ZZ^n.
$$
Write $v' = \gcd(x_i; \ i \not\in I(\sigma)) v$ with 
$v \in \sigma^\circ \cap \ZZ^n$.
Due to $\dim(\sigma) \ge 2$, the vector $v$ does 
not occur among $v_0, \ldots, v_n$. 
Evaluating $u_\sigma$ yields
$$ 
0 
\ \ge \ 
\bangle{u_\sigma,v}
\ = \ 
\gcd(x_i; \ i \not\in I(\sigma))^{-1}\bangle{u_\sigma,v'}
\ = \ 
- \gcd(x_i; \ i \not\in I(\sigma))^{-1}\sum_{i \in I(\sigma)} x_i .
$$
By assumption, we have $v \not\in A(\sigma)$.
Consequently, the right hand side term is 
strictly less than~$-1$. This gives us the desired 
estimate.
\end{proof}

We turn to bounds involving the Fano property
of a toric complete intersection threefold $X$ in
a fake weighted projective space $Z$.
A tuple $\xi = (x_0, \ldots, x_n)$ of positive integers
is ordered if $x_0 \le \ldots \le x_n$ holds and
well-formed if any $n$ of its entries are coprime.
For an ordered tuple $\xi$, we define
$$
m(\xi)
\ := \
\lcm(x_0, \ldots, x_n),
\qquad\qquad
M(\xi)
\ := \
\begin{cases}
2 m(\xi), & x_{n} = m(\xi),
\\
m(\xi), & x_n \ne m(\xi).
\end{cases}
$$
We deal with well-formed ordered tuples
$\xi = (x_0, \ldots, x_n)$ with $n \ge 4$.
As we will see, the Fano property forces
the inequality
\begin{equation}
\label{eq:abstractFano}
(n-3)M(\xi)
\ < \
x_0 + \ldots + x_n .
\end{equation}

\begin{lemma}
\label{lem:wf5-tuples}
Consider an ordered $\xi = (x_0, \ldots, x_4)$
such that any three of $x_0, \ldots, x_4$ are
coprime and condition~(\ref{eq:abstractFano})
is satisfied. 
Then $x_4 \le 41$ holds or we have
$1 \le x_0, x_1, x_2 \le 2$ and $x_3 = x_4$.
\end{lemma}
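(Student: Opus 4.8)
The plan is to rewrite condition~(\ref{eq:abstractFano}) as a reciprocal inequality and then split the argument according to whether the two largest weights coincide. First I would record the two consequences of~(\ref{eq:abstractFano}) (with $n=4$) that drive everything. Since $x_i\le x_4$, the right hand side is at most $5x_4$, so $m(\xi)\le M(\xi)<5x_4$; as $x_4\mid m(\xi)$ this already forces $m(\xi)\in\{x_4,2x_4,3x_4,4x_4\}$. Moreover, setting $c_i:=m(\xi)/x_i\in\ZZ_{\ge 1}$ and dividing~(\ref{eq:abstractFano}) by $m(\xi)$, the relation $M(\xi)\ge m(\xi)$ gives
\[
\sum_{i=0}^4\frac{1}{c_i}\ =\ \frac{1}{m(\xi)}\sum_{i=0}^4 x_i\ >\ \frac{M(\xi)}{m(\xi)}\ \ge\ 1,
\]
where $c_0\ge\ldots\ge c_4\ge 1$ and $x_3<x_4$ is equivalent to $c_3>c_4$.

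Next I would dispose of the case $x_3=x_4$, which yields the second alternative. Applying the coprimality hypothesis to the indices $i,3,4$ with $i\le 2$ gives $\gcd(x_i,x_4)=\gcd(x_i,x_3,x_4)=1$, so $x_0,x_1,x_2$ are coprime to $x_4$ and hence $m(\xi)=\ell\,x_4$ with $\ell:=\lcm(x_0,x_1,x_2)$. If $\ell\le 2$ then $x_0,x_1,x_2\in\{1,2\}$ and the second alternative holds. If $\ell\ge 3$ then $x_4\ne m(\xi)$, so $M(\xi)=m(\xi)=\ell x_4$; since each $x_i\le\ell$ for $i\le 2$, condition~(\ref{eq:abstractFano}) reads $\ell x_4<x_0+x_1+x_2+2x_4\le 3\ell+2x_4$, whence $x_4<3\ell/(\ell-2)\le 9$ and in particular $x_4\le 41$.

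It then remains to treat $x_3<x_4$ and to prove $x_4\le 41$. The plan is to convert coprimality into a statement about the $c_i$: each prime $p$ divides at most two of the $x_i$, so at least three of the $c_i$ carry the full power $p^{v_p(m(\xi))}$; together with $c_i\mid m(\xi)$ this shows $m(\xi)=\lcm(c_0,\ldots,c_4)$ and that \emph{every} prime of $m(\xi)$ divides at least three of the $c_i$. Because $\sum 1/c_i>1$, the tuple $(c_0,\ldots,c_4)$ lies in a finite set, and for each admissible tuple the extra divisibility constraint pins down the possible $m(\xi)$ and hence $x_4=m(\xi)/c_4$. The extremal configuration is the Sylvester triple $\{2,3,7\}$, realised by $(c_0,\ldots,c_4)=(42,42,7,3,2)$, i.e.\ $(x_0,\ldots,x_4)=(1,1,6,14,21)$ with $m(\xi)=42$ and $x_4=21$; thus $x_4$ stays well below the claimed threshold $41=2\cdot 3\cdot 7-1$, and a fortiori $x_4\le 41$. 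The sub-case $c_4=1$ (that is $x_4=m(\xi)$), where~(\ref{eq:abstractFano}) sharpens to $\sum 1/c_i>2$, is handled in the same way and forces $x_4\le 6$.

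The main obstacle is exactly this finite analysis. Neither constraint suffices in isolation: under the reciprocal bound alone the $c_i$—and hence their lcm $m(\xi)$—are unbounded (take $c=(N,2,2,2,2)$), while the divisibility $c_i\mid m(\xi)$ by itself says nothing about the size of $m(\xi)$. It is their interaction—every prime of $m(\xi)$ occurring to full power in at least three of the $c_i$, which are forced to be small by $\sum 1/c_i>1$—that confines the prime support of $m(\xi)$ to $\{2,3,7\}$ and simultaneously rules out higher prime powers (a prime power $p^a\ge 4$ would make three of the $c_i$ at least $p^a$, collapsing the reciprocal sum). Carrying out this case distinction over the admissible reciprocal tuples, including the prime-power and boundary cases, is the technical heart of the proof.
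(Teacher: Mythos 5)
Your reformulation in terms of $c_i=m(\xi)/x_i$ is sound as far as it goes: the inequality $\sum_i 1/c_i > M(\xi)/m(\xi)\ge 1$ is a correct restatement of~(\ref{eq:abstractFano}), triple-coprimality does translate into ``$p^{v_p(m(\xi))}$ divides at least three of the $c_i$'' for every prime $p\mid m(\xi)$ (whence $m(\xi)=\lcm(c_0,\ldots,c_4)$), and your treatment of the case $x_3=x_4$ is complete and correct --- indeed slightly cleaner than the paper's, which first splits on $x_4=m(\xi)$ versus $x_4<m(\xi)$ and only afterwards on $x_3=x_4$, whereas you obtain the alternative ``$1\le x_0,x_1,x_2\le 2$, $x_3=x_4$'' and the bound $x_4<3\ell/(\ell-2)\le 9$ directly from $m(\xi)=\ell\,x_4$ with $\ell=\lcm(x_0,x_1,x_2)$.

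The genuine gap is in the case $x_3<x_4$, which you yourself call the technical heart: it is not carried out, and the two structural claims you propose to make the finite analysis tractable are \emph{false}. First, the prime support of $m(\xi)$ is not confined to $\{2,3,7\}$: the tuple $(1,2,3,10,15)$ satisfies all hypotheses with $x_3<x_4$ (any three entries coprime, $m(\xi)=30\ne x_4$, $M(\xi)=30<31$), and $5\mid m(\xi)$; likewise $(1,1,4,5,10)$ with $m(\xi)=20$, or $(1,1,2,2,5)$ with $m(\xi)=10$. Second, prime powers $p^a\ge 4$ are not ruled out: for $(1,1,3,8,12)$ one has $m(\xi)=24$ and $c=(24,24,8,3,2)$, so three of the $c_i$ are $\ge 8=2^3$, and yet $\sum_i 1/c_i=25/24>1$ --- no collapse. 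The heuristic fails because the two smallest $c_i$ may contribute up to $1/2+1/3$, so three entries $\ge p^a$ only force $3/p^a>1/6$, i.e.\ $p^a\le 17$, not $p^a\le 3$. Consequently, a case analysis pruned by your two claims would silently skip admissible tuples (all of the above occur in the paper's classification), and without them you have not shown that the admissible $c$-tuples satisfy $m(\xi)/c_4\le 41$; your claimed extremal value $21$, attained by $(1,1,6,14,21)$, happens to be correct, but nothing in the proposal derives it. For comparison, the paper closes this case with different bookkeeping: setting $d_i=\gcd(x_i,x_4)$, it uses that $d_0,\ldots,d_3$ are pairwise coprime with $d_0\cdots d_3\mid x_4$ and $x_i=f_id_i$, $f_i\mid l$ (where $m(\xi)=lx_4$, $2\le l\le 4$), and extracts $x_4<42$ from a few explicit inequalities. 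Your route is repairable in the same spirit --- e.g.\ every odd prime power $Q=p^{v_p(m(\xi))}$ forces $c_2\ge Q$, hence $\sum_i 1/c_i\le 1/2+1/3+3/Q$ and $Q\le 17$, and two such prime powers force two $c_i\ge Q_1Q_2$, etc.\ --- but as written the core of the argument is missing, and the lemmas offered in its place are unsound.
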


\begin{proof}
We first settle the case $x_4 = m(\xi)$.
Then $x_4$ is divided by each of
$x_0, \ldots, x_3$.
This implies
$$
\gcd(x_i, x_j)
\ = \
\gcd(x_i, x_j, x_4)
\ = \ 1,
\qquad
0 \le i < j \le 3.
$$
Consequently, $x_0 \cdots x_3$ divides $x_4$.
Subtracting $x_4$ from both sides of
the inequality~(\ref{eq:abstractFano})
leads to
$$
 x_0 \cdots x_3
\ \le \
x_4
\ < \
x_0 + \ldots + x_3.
$$
Using $1 \le x_0 \le \ldots \le x_3$ and
pairwise coprimeness of $x_0, \ldots, x_3$, we
conclude that the tuple $(x_0,x_1,x_2,x_3)$
is one of 
$$
(1,1,2,3),
\qquad\qquad
(1,1,1,x_3).
$$
In the first case, we arrive at
$x_4 < x_0 + \ldots + x_3 = 7$.
In the second one, $x_4 = dx_3$
holds with $d \in \ZZ_{\ge 1}$.
Observe
$$
dx_3
\ = \ 
x_4
\ < \
x_0 + \ldots + x_3
\ = \
3 + x_3.
$$ 
Thus, we have to deal with $d = 1,2,3$.
For $d=1$, we arrive at $x_3=x_4$
and the cases $d=2,3$ lead to $x_3 \le 2$
which means $x_4 < 5$.

Now we consider the case $x_4 < m(\xi)$.
Then $m(\xi) = lx_4$ with $l \in \ZZ_{\ge 2}$.
From inequality~(\ref{eq:abstractFano}) we
infer $l \le 4$ as follows:
$$
lx_4
\ = \ 
m(\xi)
\ < \
x_0 + \ldots + x_4
\ \le \ 5x_4.
$$
We first treat the case $x_3 = x_4$.
Using the assumption that any three
of $x_0, \ldots, x_4$ are coprime,
we obtain
$$
\gcd(x_i,x_4)
\ = \
\gcd(x_i,x_3,x_4)
\ = \ 1,
\qquad
i = 0,1,2.
$$
Consequently, $x_2x_4 \le m(\xi) = lx_4$
and $x_2 \le l \le 4$.
For $l = 2$ this means $1 \le x_0,x_1,x_2 \le 2$.
For $l = 3,4$, we use again~(\ref{eq:abstractFano})
and obtain
$$
x_4
\ < \
\frac{1}{l-2} (x_0+x_1+x_2)
\ \le \
12.
$$
Now we turn to the case $x_3 < x_4$. 
Set for short $d_i \coloneqq \gcd(x_i, x_4)$.
Then, for all $0 \le i < j \le 3$, we observe
$$
\gcd(d_i, d_j)
\ = \
\gcd(x_i, x_j, x_4)
\ = \
1.
$$
Consequently $d_{0} \cdots d_{3} \mid x_4$.
For $i = 0, \ldots, 3$, write $x_i = f_i d_i$ 
with $f_i \in \ZZ_{\ge 1}$. Then~$f_i$ divides 
$lx_4$ and hence $l$.
Fix $i_0, \ldots, i_3$ pairwise distinct with 
$d_{i_0} \le \ldots \le d_{i_3}$. 
Using~(\ref{eq:abstractFano}), we obtain
$$
(l-1) d_{i_0} \cdots d_{i_3} 
\ \leq \ 
(l-1) x_4
\ < \ 
f_{i_0} d_{i_0} + \ldots + f_{i_3} d_{i_3}
\ \le  \ 
(2 + 2l) d_{i_3}.
$$
For the last estimate, observe that due to $l = 2,3,4$, 
all $f_i \ne 1$ have a common factor 2 or 3. 
Thus, as any three of $x_0, \ldots, x_3$ are coprime,
we have $f_i = 1$ for at least two~$i$.
We further conclude
$$
d_{i_0} d_{i_1} d_{i_2} 
\ < \ 
\frac{(2+ 2l)}{l -1} 
\ \le \
6.
$$
This implies $d_{i_0} = d_{i_1} = 1$ and $d_{i_2} \le 5$.
We discuss the case $f_{i_3} = 1$. There, we have
$x_{i_3} = d_{i_3}$, hence $x_{i_3} \mid x_4$.
By assumption, $x_0 \le \ldots \le x_3 < x_4$ and
thus $x_{i_3} < x_4$. We conclude $d_{i_3} = x_{i_3} \le x_4 / 2$.
From above we infer
$$
(l-1) x_4
\ < \ 
f_{i_0} d_{i_0} + \ldots + f_{i_3} d_{i_3}
\ \le \ 
l(2 + d_{i_2}) + \frac{x_4}{2}.
$$
Together with $l = 2,3,4$ and $d_{i_2} \le 5$ as observed 
before, this enables us to estimate~$x_4$ as follows:
$$
x_4
\ < \ 
2l \frac{2+ d_{i_3}}{2l - 3}
\ \le \ 
28.
$$
Now let $f_{i_3} > 1$. 
Then
$2 d_{i_3} \le f_{i_3} d_{i_3} = x_{i_3} < x_4$
holds.
This gives $d_{i_3} < x_4 / 2$.
Using $d_{i_3} \mid x_4$ we conclude 
$d_{i_3} \le x_4 / 3$.
Similarly as before, we proceed by
$$	
(l-1) x_4
\ < \
f_{i_0} d_{i_0} + \dotsb + f_{i_3} d_{i_3}
\ \le \  
l(2 + d_{i_2}) + l d_{i_3}
\ \leq \ 
l(2 + d_{i_2}) + l\frac{x_4}{3}.
$$
Again, inserting $l = 2,3,4$ and the bound 
$d_{i_2} \le 5$ finally leads to the desired 
estimate
$$
x_4 
\ < \ 
3l\frac{2 + d_{i_2}}{2l -3}
\ \le \
42.
$$
\end{proof}

\begin{lemma}
\label{lem:wf6tuples-1}
Consider a well-formed ordered $\xi = (x_0, \dotsc, x_5)$
satisfying~(\ref{eq:abstractFano}).
Then $x_5 \le 21$ holds or we have $1 \le x_0, x_1 \le 2$ 
and $x_2 = x_3 = x_4 = x_5$.
\end{lemma}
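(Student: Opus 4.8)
The plan is to run the same high–level case split as in the proof of Lemma~\ref{lem:wf5-tuples}, organised around whether $x_5$ equals $m(\xi)$, but to exploit a feature special to $6$-tuples: for $n=5$ the two cases collapse into a single inequality together with the strong divisibility constraint $\lcm(x_0,\dots,x_5)\le 2x_5$. First I would rewrite condition~(\ref{eq:abstractFano}). Here $n=5$, so its left hand side is $2M(\xi)$ while its right hand side is at most $6x_5$. If $x_5=m(\xi)$, then $M(\xi)=2m(\xi)=2x_5$. If $x_5\neq m(\xi)$, then $M(\xi)=m(\xi)=l\,x_5$ with $l\ge 2$, and $2l\,x_5=2M(\xi)<6x_5$ forces $l=2$, hence $m(\xi)=2x_5$. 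In either case $2M(\xi)=4x_5$, so~(\ref{eq:abstractFano}) becomes
\[
3x_5 \ < \ x_0+x_1+x_2+x_3+x_4,
\]
and, in addition, every $x_i$ divides $m(\xi)\in\{x_5,2x_5\}$, in particular $x_i\mid 2x_5$.

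Next I would dispose of the exceptional configuration. If $x_2=x_5$, then, by the ordering, $x_2=x_3=x_4=x_5$, and for each of the two smallest weights the $5$-element subset obtained by deleting the other one has gcd equal to $\gcd(x_i,x_5)$, since the remaining four entries are all $x_5$. Well-formedness forces this gcd to be $1$, and together with $x_i\mid 2x_5$ this yields $x_i\mid 2$, i.e.\ $x_0,x_1\le 2$, which is precisely the stated alternative. So from now on I may assume $x_2<x_5$. Feeding $x_0,x_1,x_2\le x_2$ and $x_3,x_4\le x_5$ into the displayed inequality gives $3x_5<3x_2+2x_5$, hence $x_2>x_5/3$. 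Thus $x_2,x_3,x_4$ are divisors of $2x_5$ lying in $(x_5/3,\,x_5]$; writing $d=2x_5/c$ one checks $c\in\{2,3,4,5\}$, so
\[
x_2,\,x_3,\,x_4 \ \in \ \Bigl\{\,x_5,\ \tfrac{2x_5}{3},\ \tfrac{x_5}{2},\ \tfrac{2x_5}{5}\,\Bigr\}.
\]

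Finally I would run the resulting finite case analysis over the possible values of $(x_2,x_3,x_4)$. In each configuration, well-formedness in its usable form---no prime divides five of the six entries---pins the two smallest weights $x_0,x_1$ to divisors of a small fixed integer and forbids them from being simultaneously divisible by the relevant shared prime. For instance, in the extremal branch $x_2=\tfrac{2x_5}{3}$, $x_3=x_4=x_5$, deleting $x_0$ (respectively $x_1$) shows $\gcd(x_1,x_5/3)=1$ (respectively $\gcd(x_0,x_5/3)=1$), so $x_0,x_1\mid 6$, while deleting $x_2$ forbids $3\mid x_0$ and $3\mid x_1$ at once; hence $x_0+x_1\le 8$. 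Substituting such a bound on $x_0+x_1$ back into the Fano inequality then caps $x_5$, and a short check confirms that the branches built from the values $x_5,\,\tfrac{x_5}{2}$ (the $\lcm=x_5$ situation) and from $\tfrac{2x_5}{5}$ force $x_5$ to be small, so that the branch above is the binding one.

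The hard part is that this last step is genuinely arithmetic rather than purely numerical. In the extremal branch the inequality only yields $x_5<24$, and one must use that $x_5$ is a multiple of $3$ whose cofactor is coprime to $6$ (so that $x_5\in\{3,15,21\}$ below $24$) to conclude $x_5\le 21$; the extremal tuple $(2,6,14,21,21,21)$ shows the bound is sharp. Verifying that every remaining configuration respects the $x_i\le x_{i+1}$ ordering together with the coprimality obstructions, and that none of them beats $21$, is the bulk of the bookkeeping, entirely parallel to the final stage of Lemma~\ref{lem:wf5-tuples}.
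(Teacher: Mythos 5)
Your proposal is correct and takes essentially the same route as the paper: both reduce (\ref{eq:abstractFano}) to $M(\xi)=2x_5$, hence to $3x_5 < x_0+\dots+x_4$ with each $x_i$ of the form $2x_5/a_i$, $a_i\ge 2$, and both close the resulting finite case analysis via the well-formedness coprimality obstructions --- the paper merely organizes this as a contradiction from $x_5\ge 22$, forcing successively $x_4=x_5$, then $x_3=x_5$, then $x_2=x_5$, whereas you dispose of the alternative $x_2=x_5$ up front and bound each remaining branch directly. Your handling of the binding branch $x_2=2x_5/3$, $x_3=x_4=x_5$ is sound and arguably cleaner than the paper's ($x_0,x_1\mid 6$, not both divisible by $3$, so $x_5/3 < x_0+x_1\le 8$), and the sharpness example $(2,6,14,21,21,21)$ --- which realizes the paper's sub-case $x_1=2x_5/7$ --- is a genuine addition absent from the paper. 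Two caveats: first, your parenthetical claim that the cofactor $x_5/3$ must be coprime to $6$ is false --- the tuple $(1,3,4,6,6,6)$ is ordered, well-formed, satisfies (\ref{eq:abstractFano}) and lies in this branch with cofactor $2$ --- but it is harmless, since $3\mid x_5$ together with $x_5<24$ already yields $x_5\le 21$. Second, the deferred branches amount to more than a ``short check'': inside $x_2=2x_5/3$, $x_3=x_4=x_5$ alone, one must still run $x_1$ through the values $2x_5/a$ for $a=3,\dots,11$ and, in the case $x_1=2x_5/11$, $x_0=2x_5/13$, combine two gcd obstructions ($x_5=33$ against $x_5=39$) to reach a contradiction --- this bookkeeping is where the paper spends most of its effort; every deferred case does pin $x_5$ to a value at most $21$ or to a contradiction exactly as you assert, so the plan is complete as stated.
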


\begin{proof}
Let $x_5 \ge 22$.
We have $M(\xi) = lx_5$ with $l \ge 2$.
From~(\ref{eq:abstractFano}) we infer
$2 l x_5 < 6x_5$, hence $l=2$.
Thus, we can reformulate~(\ref{eq:abstractFano})
as
$$
3x_5
\ < \
x_0 + \ldots + x_4.
$$
Moreover, $M(\xi) = 2x_5$ implies $a_ix_i = 2x_5$
with suitable $a_i \in \ZZ_{\ge 2}$ for
$i = 0,\ldots, 4$.
In particular, the possible values of $x_0,\ldots,x_4$
are given as
$$
x_5,
\quad
\frac{2}{3}x_5,
\quad
\frac{1}{2}x_5,
\quad
\frac{2}{5}x_5,
\quad
\frac{1}{3}x_5,
\quad
\frac{2}{7}x_5,
\quad
\ldots \ .
$$
We show $x_4 = x_5$. Suppose $x_4 < x_5$.
Then $x_4 \le 2x_5/ 3$.
We have $x_1 \ge 2x_5/ 3$, because otherwise
$x_1 \le x_5/2$ and thus
$$
3x_5
\ < \
x_0 + \ldots + x_4
\ \le \ 
\frac{1}{2}x_5 + \frac{1}{2}x_5 +
\frac{2}{3}x_5 + \frac{2}{3}x_5 + \frac{2}{3}x_5
\ = \
3 x_5,
$$
a contradiction. We conclude
$x_1 = \ldots = x_4 = 2x_5/ 3$.
By well-formedness, the integers
$x_1,\ldots,x_5$ are coprime.
Combining this with 
$$
3x_1
\ = \ \ldots \ = \
3x_4
\ = \
2x_5
$$
yields $x_5 = 3$, contradicting $x_5 \ge 22$.
Thus, $x_4 = x_5$, and we can update
the previous reformulation
of~(\ref{eq:abstractFano})~as
$$
2x_5
\ < \
x_0 + \ldots + x_3.
$$
We show $x_3 = x_5$. Suppose $x_3 < x_5$.
Then, by the limited stock of possible values
for the $x_i$, the displayed inequality
forces $x_3 = 2x_5/3$ and one of the following
$$
x_2 \ = \ \frac{2}{3}x_5,
\quad
x_1 \ = \ \frac{2}{3}x_5,  \frac{1}{2}x_5, \ \frac{2}{5}x_5,
\qquad \qquad
x_2 \ = \  \frac{1}{2}x_5,
\quad
x_1 \ = \ \frac{1}{2}x_5.
$$
By well-formedness, $x_1, \ldots, x_5$ are coprime.
Depending on the constellation,
this leads to $x_5 = 3, 6, 15$,
contradicting $x_5 \ge 22$. 
Thus, $x_3=x_5$.
Observe
$$
x_5
\ < \
x_0 + x_1 + x_2,
\qquad
\gcd(x_i,x_j,x_5) = 1,
\quad
0 \le i < j \le 2.
$$
We show $x_2 = x_5$ by excluding all values $x_2 < x_5$.
First note $x_2 > x_5/3$.
Assume $x_2 = 2x_5/5$.
Then, by the above inequality, $x_1 = 2x_5/5$.
We obtain
$$
5x_1 \ = \ 5x_2 \ = \ 2x_5 .
$$
thus $\gcd(x_1,x_2,x_5) = 1$ implies $x_5 = 5$,
a contradiction to $x_5 \ge 22$.
Next assume $x_2 = x_5/2$.
The inequality leaves us with
$$
x_1
\ = \
\frac{1}{2}x_5, \
\frac{2}{5}x_5, \
\frac{1}{3}x_5, \
\frac{2}{7}x_5.
$$
Thus, using $\gcd(x_1,x_2,x_5) = 1$ we arrive
at $x_5 = 2,10,6,14$ respectively, contradicting
$x_5 \ge 22$. Finally, let $x_2 = 2x_5/3$.
Then we have to deal with 
$$
x_1
\ = \
\frac{2}{3}x_5, \
\frac{1}{2}x_5, \
\frac{2}{5}x_5, \
\frac{1}{3}x_5, \
\frac{2}{7}x_5, \
\frac{1}{4}x_5, \
\frac{2}{9}x_5, \
\frac{1}{5}x_5, \
\frac{2}{11}x_5.
$$
Using $\gcd(x_1,x_2,x_5) = 1$ gives
$x_5 = 3,6,15,3,21,12,9,15$
in the first eight cases,
excluding those.
Thus, we are left with
the three cases
$$
x_2 \ = \ \frac{2}{3}x_5,
\qquad
x_1 \ = \ \frac{2}{11}x_5,
\qquad
x_0 \ = \ \frac{2}{11}x_5, \ \frac{1}{6}x_5, \ \frac{2}{13}x_5.
$$
In the first one, coprimeness of $x_0,x_1,x_5$ gives
$x_5 = 11$ and in the second one
coprimeness of $x_0,x_2,x_5$ implies $x_5 = 6$.
The third case is excluded by
$$
\gcd(x_1,x_2,x_5) = 1 \ \Rightarrow \ x_5 = 33,
\qquad \qquad
\gcd(x_0,x_2,x_5) = 1 \ \Rightarrow \ x_5 = 39.
$$
Thus, $x_2 = x_5$.
We care about $x_0$ and $x_1$.
Well-formedness and $x_2 = \ldots = x_5$
yield that $x_0,x_5$ as well as $x_1,x_5$
are coprime.
Thus, we infer $1 \le x_0,x_1 \le 2$ from
$$
a_0 x_0 \ = \ 2 x_5,
\qquad \qquad
a_1 x_1 \ = \ 2 x_5.
$$
\end{proof}

\begin{lemma}
\label{lem:wf7tuples}
There exist only two ordered well-formed septuples
$(x_0, \ldots, x_6)$ satisfying~(\ref{eq:abstractFano}),
namely
$(1, 1, 1, 1, 1, 1, 1)$ and
$(2, 2, 3, 3, 3, 3, 3)$.
\end{lemma}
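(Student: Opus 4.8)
The plan is to exploit that, for seven entries, the inequality~(\ref{eq:abstractFano}) is so restrictive that it forces all but the two smallest entries to equal the maximum $x_6$. So unlike the five- and six-tuple lemmas, I expect this case to be short.

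First I would pin down $M(\xi)$. If $x_6 = m(\xi)$, then $M(\xi) = 2m(\xi) = 2x_6$, and using $x_0 + \ldots + x_6 \le 7x_6$ the condition~(\ref{eq:abstractFano}) reads $6x_6 < x_0 + \ldots + x_6$. If $x_6 \ne m(\xi)$, write $m(\xi) = lx_6$ with $l \ge 2$; then $M(\xi) = m(\xi)$ and $3lx_6 < x_0 + \ldots + x_6 \le 7x_6$ forces $l = 2$, so again $m(\xi) = 2x_6$ and $6x_6 < x_0 + \ldots + x_6$. Thus in every case each $x_i$ divides $2x_6$ and the inequality becomes
$$
x_0 + \ldots + x_5 \ > \ 5x_6 .
$$

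The key observation is that, since $x_i \mid 2x_6$, an entry $x_i < x_6$ satisfies $x_i \le 2x_6/3$, because $2x_6/x_i$ is then an integer exceeding $2$. Hence, if exactly $t$ of $x_0, \ldots, x_5$ equal $x_6$, the remaining ones contribute at most $2x_6/3$ each, so that $x_0 + \ldots + x_5 \le (12+t)x_6/3$; comparing with the displayed inequality yields $t \ge 4$. By orderedness this means $x_2 = x_3 = x_4 = x_5 = x_6 =: c$, and the inequality collapses to $x_0 + x_1 > c$.

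It then remains to determine $x_0 \le x_1 \le c$ with $x_0, x_1 \mid 2c$ and $x_0 + x_1 > c$. Here well-formedness enters: removing $x_1$, respectively $x_0$, from the tuple $(x_0, x_1, c, c, c, c, c)$ forces $\gcd(x_0, c) = \gcd(x_1, c) = 1$, and together with $x_i \mid 2c$ this gives $x_0, x_1 \in \{1, 2\}$. Then $x_0 + x_1 \le 4$ bounds $c \le 3$, and running through $c \in \{1, 2, 3\}$ (discarding $c = 2$, where coprimality forces $x_0 = x_1 = 1$ so that $x_0 + x_1 = c$ and the inequality fails) leaves precisely $(1,1,1,1,1,1,1)$ and $(2,2,3,3,3,3,3)$. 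The only delicate points are the unification of the two cases for $M(\xi)$ into the single statement ``$x_i \mid 2x_6$ and $x_0 + \ldots + x_5 > 5x_6$'', and the strictness in the counting step (the borderline $t = 3$ gives exactly $5x_6$ and must be excluded); I do not expect either to present a genuine obstacle.
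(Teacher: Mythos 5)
Your proposal is correct and follows essentially the same route as the paper's proof: both force $M(\xi) = 2x_6$ from the inequality, use the divisibility $x_i \mid 2x_6$ to bound every sub-maximal entry by $2x_6/3$, conclude that at most two entries lie below the maximum, and then invoke well-formedness to get $\gcd(x_0,x_6) = \gcd(x_1,x_6) = 1$, hence $x_0, x_1 \in \{1,2\}$, before a short final case check. Your counting of $t$ (entries equal to $x_6$) versus the paper's index $j$ (entries strictly below $x_6$), and your deferral of well-formedness entirely to the endgame where the paper also uses it early to get $j \ge 2$, are only cosmetic reorganizations of the same argument.
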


\begin{proof}
The case $x_6 = 1$ gives the first tuple.
Let $x_6 > 1$. Then $M(\xi) = l x_6$ holds
with $l \ge 2$.
Using~(\ref{eq:abstractFano}), we see
$3lx_6 < 7x_6$ which means $l = 2$.
We obtain
$$
5 x_6 \ < \ x_0+ \ldots + x_5
$$
by adapting the inequality~(\ref{eq:abstractFano})
to the present setting.
Similar to the preceding proof, $M(\xi) = 2x_6$
leads to presentations
$$
x_i
\ = \
\frac{2}{a_i} x_6,
\qquad
a_i \in \ZZ_{\ge 2},
\quad
i = 0,\ldots, 5.
$$
Now, pick the unique $j$ with
$x_0 \le \ldots \le x_{j-1} < x_j = \ldots = x_6$.
Well-formedness implies $j \ge 2$.
Moreover $x_{j-1} \le 2x_6/3$ holds and thus
$$
5 x_6
\ < \
\frac{2}{3}jx_6 + (6-j)x_6
\ = \
\frac{18-j}{3} x_6 .
$$
This implies $j < 3$. Thus $j=2$, which means
$x_0 \le x_1 < x_2 = \ldots = x_6$.
Adapting the inequality~(\ref{eq:abstractFano}) 
accordingly gives 
$$
x_6 \ < \ x_0+x_1.
$$
Moreover, by well-formedness, $x_0,x_6$ as well as
$x_1,x_6$ are coprime.
Consequently, we can deduce $1 \le x_0 \le x_1 \le 2$
from
$$
a_0 x_0 \ = \ 2 x_6,
\qquad \qquad
a_1 x_1 \ = \ 2 x_6.
$$
Now, $x_6 > 1$ excludes $x_1 = 1$. Next,
$x_0 = 1$ would force $x_6 = 2 = x_1$, contradicting
the choice of $j$. Thus, we arrive at $x_0 = x_1 = 2$
and $x_2 = \ldots = x_6 = 3$.
\end{proof}

The last tool package for the proof of
Theorem~\ref{thm:terminalWHf-new} supports
the verification of candidates in the sense
that it allows us to show that each of the
specifying data in the list do indeed stem
from a toric complete intersection.

\begin{reminder}
\label{rem:divpoly}
Consider any complete toric variety $Z$ arising 
from a lattice fan~$\Sigma$ in $\ZZ^n$.
With every invariant Weil divisor
$C = a_1 D_1 + \ldots + a_r D_r$ on $Z$
one associates its \emph{divisorial polytope}
$$
B(C) 
\ = \ 
\{ u \in \QQ^n; \ \bangle{u, v_i} \ge -a_i, \ i = 1, \ldots, r \}
\ \subseteq \
\QQ^n.
$$
If $C$ is base point free, then $B(C)$ has integral 
vertices and $\Sigma$ refines the normal fan of $B(C)$.
If in addition $C$ is ample, then $B(C)$ is a full-dimensional 
lattice polytope having  $\Sigma$ as its normal fan.
\end{reminder}

Given base point free classes $\mu_1, \ldots, \mu_s$
on a toric variety $Z$, the question is wether
or not these are the relation degrees of a (general)
toric complete intersection.
The following criterion relies on
Corollary~\ref{cor:general2SrRa}.

\begin{remark}
\label{rem:verfication-tools}
Consider a complete toric variety $Z$ given by
a fan~$\Sigma$ in~$\ZZ^n$ and let 
$\mu_1, \ldots, \mu_s \in \Cl(Z)$ such that 
each $\mu_j$ admits a base point free
representative $C_j \in \WDiv^{\TT}(Z)$. 
Being integral, the $B(C_j)$ can be realized 
as Newton polytopes: 
$$ 
B(C_j) \ = \ B(f_j),
\qquad
f_j \ \in \ \LP(n),
\qquad
j \ = \ 1, \ldots, s.
$$
Consider the system $F = (f_1,\ldots,f_s)$ in $\LP(n)$. 
The fan $\Sigma$ refines the 
normal fan of the Minkowski sum $B(C_1) + \ldots + B(C_s)$
and hence is an $F$-fan.
For the $\Sigma$-homogenization $G = (g_1, \ldots, g_s)$
of $F$ we have $\deg(g_j) = \mu_j \in \Cl(Z)$.
\begin{enumerate}
\item
If $F$ is non-degenerate, then the associated variety 
$X \subseteq Z$ is a toric complete intersection.
\item
If $F$ is non-degenerate, then there is a non-empty
open $U \subseteq V_F$ such that every
$G' \in U$ defines a non-degenerate $F' \in \LP(n)$.
\item 
If $F$ is general and we have
$\dim(\bar Z \setminus \hat Z) \le r-s-2$,
then $\Cl(X) = \Cl(Z)$ holds and
the Cox ring of $X$ is given as
$$
\qquad 
\mathcal{R}(X)
\  = \
\KK[T_1,\ldots,T_r] / \bangle{g_1, \ldots, g_s},
$$
where $\deg(T_i) \in \Cl(X)$ is the class $[D_i] \in \Cl(Z)$
of the invariant prime divisor $D_i \subseteq Z$ corresponding
to~$T_i$.
\end{enumerate}  
\end{remark}

\begin{proof}[Proof of Theorem~\ref{thm:terminalWHf-new}]
Let $Z$ be a fake weighted projective space arising from 
a fan~$\Sigma$ in  $\ZZ^n$ and let 
$X = X_1 \cap \ldots \cap X_s \subseteq Z$ 
be a general (non-degenerate) terminal Fano complete 
intersection threefold.
Write $G = (g_1,\ldots, g_s)$ for the $\Sigma$-homogenization
of the defining Laurent system $F = (f_1,\ldots,f_s)$ of
$X \subseteq Z$.
We have
$$ 
\Cl(Z) 
\ = \ 
\ZZ \times \ZZ / t_1 \ZZ \times \ldots \times \ZZ / t_q \ZZ 
$$
for the divisor class group of $Z$. 
As before, the generator degrees~$w_i = \deg(T_i)$ 
and the relation degrees $\mu_j = \deg(g_j)$ 
in $\Cl(Z)$ are given as  
$$ 
w_i 
\ = \ 
[D_i]
\ = \ 
(x_i, \eta_{i1}, \ldots, \eta_{iq}),
\qquad
\mu_j 
= 
[X_j] 
\ = \ 
(u_j, \zeta_{j1}, \ldots, \zeta_{jq}).
$$
We assume that the presentation
$X \subseteq Z$ is irredundant in the sense
that no $g_i$ has a monomial $T_i$; otherwise,
as the $\Cl(Z)$-grading 
is pointed, we may write $g_j = T_i + h_j$ with $h_j$ 
not depending on $T_i$ and, eliminating $T_i$,
we realize $X$ in a smaller fake weighted projective 
space.
Moreover, suitably renumbering, we achieve
$$
x_0 \le \ldots \le x_n,
\qquad\qquad
u_1 \le \ldots \le u_s.
$$
According to the generality condition, we may assume
that every monomial of degree $\mu_j$ shows up
in the relation $g_j$, where $j = 1, \ldots, s$.
In particular, as Lemma~\ref{lem:fwpsfromlaurent}
shows $\mu_j = l_{ji} w_i$ with $l_{ji} \in \ZZ_{\ge 1}$,
we see that each power $T_i^{l_{ji}}$ is a monomial
of~$g_j$. By irredundance of the presentation,
we have $l_{ji} \ge 2$ for all $i$ and $j$.

We will now establish effective 
bounds on the $w_i$ and $\mu_j$
that finally allow a computational 
treatment of the remaining cases.
The following first constraints 
are caused by terminality.
By Corollary~\ref{cor:SigmaXcrit},
all two-dimensional cones of~$\Sigma$
belong to~$\Sigma_X$ and by
Corollary~\ref{cor:acc-singu}, 
the toric orbits corresponding
to these cones host at most
terminal singularities of $Z$.
Thus, Lemma~\ref{lem:terminalprops}~(i)
tells us that $\Cl(Z)$ is
generated by any $n-1$ of 
$w_0, \ldots, w_n$.
In particular, any $n-1$ of 
$x_0, \ldots, x_n$ are coprime
and, choosing suitable generators
for $\Cl(Z)$, we achieve
$$ 
\Cl(Z) 
\ = \ 
\ZZ \times \ZZ / t_1 \ZZ \times \ldots \times \ZZ / t_q \ZZ,
\qquad
q \ \le \ n-1.
$$

Next, we see how the Fano property of $X$ 
contributes to bounding conditions.
Generality and Corollary~\ref{cor:general2SrRa}
ensure that $X$ inherits its divisor class 
group from the
ambient fake weighted projective space~$Z$.
Moreover, by Proposition~\ref{prop:adjunction},
the anticanonical class~$-\mathcal{K}_X$ of~$X$ 
is given in terms of the generator 
degrees~$w_i = \deg(T_i)$, the relation 
degrees $\mu_j = \deg(g_j)$ and $n = s+3$ as 
$$
-\mathcal{K}_X 
\ = \ 
w_0 + \ldots + w_{n} 
- 
\mu_1 - \ldots - \mu_s 
\ \in \ 
\Cl(Z)
\ = \ 
\Cl(X).
$$
Now, consider the tuples $\xi = (x_0, \ldots, x_n)$
and $(u_1,\ldots,u_s)$ of $\ZZ$-parts of the
generator and relation degrees.
As seen above, we have $u_j= l_{ji}x_i$ with
$l_{ji} \in \ZZ_{\ge 2}$ for all $i$ and $j$.
Thus, $m(\xi)  = \lcm(x_0, \ldots, x_n)$
divides all~$u_j$, in particular $m(\xi) \le u_j$.
Moreover, if $m(\xi) \ne x_n$, then we even
have $2m(\xi) \le u_j$.
Altogether, with $M(\xi) := 2m(\xi)$ if $m(\xi) \ne x_n$
and $M(\xi) := m(\xi)$ else, we arrive
in particular at the
inequality~(\ref{eq:abstractFano}):
$$
(n-3) M(\xi)
\ = \
s M(\xi)
\ \le \ 
u_1 + \ldots + u_s
\ < \
x_0 + \ldots + x_n.
$$

This allows us to conclude that the number $s$
of defining equations for our $X \subseteq Z$
is at most three.
Indeed, inserting $2x_n \le u_j$ and $x_i \le x_n$,
we see that $2sx_n$ is strictly less than
$(n+1)x_n = (s+4)x_n$.
We go through the cases $s = 1, 2, 3$
and provide upper bounds on the generator
degrees $x_0, \ldots, x_{n}$.

Let $s = 1$. Then $n = 4$.
We will show $x_4 \le 41$.
As noted above any three of $x_0, \ldots, x_4$
are coprime.
Thus, Lemma~\ref{lem:wf5-tuples} applies,
showing that we have $x_4 \le 41$ or 
the tuple $(x_0, \ldots, x_4)$ is one of 
$$
(1, 1, 1, x_4, x_4), \quad
(1, 1, 2, x_4, x_4), \quad
(1, 2, 2, x_4, x_4).
$$
In the latter case, consider
$\sigma = \cone(v_0, v_1, v_2) \in \Sigma$.
Corollary~\ref{cor:SigmaXcrit} ensures
$\sigma \in \Sigma_X$.
Due to by Corollary~\ref{cor:acc-singu},
we may apply Lemma~\ref{lem:terminalprops}~(ii),
telling us
$$
x_4
\ = \
\gcd(x_3, x_4)
\ < \
x_0 + x_1 + x_2
\ \le \
5.
$$
Let $s= 2$. Then $n=5$. We will show $x_5 \le 21$.
According to Lemma~\ref{lem:wf6tuples-1},
we only have to treat the case $x_2 = \ldots = x_5$.
As noted above, we have
$$
x_5 \ = \ \gcd(x_2, \dotsc, x_5) \ = \ 1.
$$
Let $s = 3$. Then $n = 6$.
Lemma~\ref{lem:wf7tuples} leaves us with $(x_0, \ldots, x_6)$
being one of the tuples $(1, 1, 1, 1, 1, 1)$
and $(2, 2, 3, 3, 3, 3, 3)$.
As before, we can exclude the second
configuration.

Next, we perform a computational step.
Subject to the bounds just found,
we determine all ordered, well formed
tuples $\xi = (x_0, \ldots, x_n)$,
where $n = s+3$ and $s = 1,2,3$,
that admit an ordered tuple
$(u_1,\ldots,u_s)$ such that
$$
u_1+ \ldots + u_s
 > 
x_0 + \ldots + x_n,
\qquad
l_{ji} :=  \frac{u_j}{x_i}  \in  \ZZ_{\ge 2},
\ j = 1, \ldots, s,
\ i = 0, \ldots, n
$$
holds and any $n-1$ of $x_0, \ldots, x_n$
are coprime.
This is an elementary computation
leaving us with about a hundred
tuples $\xi = (x_0, \ldots, x_n)$, each of
which satisfies $x_0 = 1$.

As consequence, we can bound the data of
the divisor class group $\Cl(Z)$.
As noted, we have $q \le n-1$
and Lemma~\ref{lem:torsbound} now provides
upper bounds on the orders~$t_k$ of
the finite cyclic factors.
This allows us to compute a list of
specifying data $(Q, \mu_1, \ldots, \mu_s)$
of candidates for $X \subseteq Z$
by building up degree maps
$$
Q \colon
\ZZ^{n+2}
\ \to \
\Cl(Z) = \ZZ \times \ZZ / t_1 \ZZ \times \dotsb \times \ZZ / t_{q} \ZZ,
\qquad
e_i \ \mapsto \ w_i
$$
and pick out those that satisfy the
constraints established so far.
In a further step, we check the candidates
for terminality using the criterion provided
Corollary~\ref{cor:acc-singu}; computationally,
this amounts to a search of lattice points
in integral polytopes.
The affirmatively tested candidates form the list 
of Theorem~\ref{thm:terminalWHf-new}.
All the computations have been performed with
Magma and will be made available elsewhere.

Remark~\ref{rem:verfication-tools} 
shows that each specifying data
$(Q, \mu)$ in the list 
of Theorem~\ref{thm:terminalWHf-new} stems
indeed from a general toric complete intersection
$X$ in the fake weighted projective space~$Z$.
Finally, Corollary~\ref{cor:general2SrRa} ensures
that the Cox ring of all listed $X$ is as claimed.
In particular, none of the $X$ is toric.
Most of the listed families can be distinguished
via the divisor class group $\Cl(X)$, the anticanonical
self intersection $-\mathcal{K}_X^3$ and $h^0(-\KKK_X)$.
For Numbers~12 and 39, observe that their Cox rings
have non-isomorphic configurations of generator degrees,
which also distinguishes the members of these families.
\end{proof}

\begin{bibdiv}
  \begin{biblist}
  
\bib{ABR}{article}{
   author={Alt\i nok, Selma},
   author={Brown, Gavin},
   author={Reid, Miles},
   title={Fano 3-folds, $K3$ surfaces and graded rings},
   conference={
      title={Topology and geometry: commemorating SISTAG},
   },
   book={
      series={Contemp. Math.},
      volume={314},
      publisher={Amer. Math. Soc., Providence, RI},
   },
   date={2002},
   pages={25--53},
}

\bib{ACG}{article}{
   author={Artebani, Michela},
   author={Comparin, Paola},
   author={Guilbot, Robin},
   title={Quasismooth hypersurfaces in toric varieties},
   journal={Proc. Amer. Math. Soc.},
   volume={147},
   date={2019},
   number={11},
   pages={4565--4579},
   issn={0002-9939},
}

\bib{ArBrHaWr}{article}{
   author={Arzhantsev, Ivan},
   author={Braun, Lukas},
   author={Hausen, J\"{u}rgen},
   author={Wrobel, Milena},
   title={Log terminal singularities, platonic tuples and iteration of Cox
   rings},
   journal={Eur. J. Math.},
   volume={4},
   date={2018},
   number={1},
   pages={242--312},
   issn={2199-675X},
}

\bib{ArDeHaLa}{book}{
   author={Arzhantsev, Ivan},
   author={Derenthal, Ulrich},
   author={Hausen, J\"urgen},
   author={Laface, Antonio},
   title={Cox rings},
   series={Cambridge Studies in Advanced Mathematics},
   volume={144},
   publisher={Cambridge University Press, Cambridge},
   date={2015},
   pages={viii+530},
}

\bib{Ba1}{article}{
   author={Batyrev, Victor},
   title={The stringy Euler number of Calabi-Yau hypersurfaces in toric
   varieties and the Mavlyutov duality},
   journal={Pure Appl. Math. Q.},
   volume={13},
   date={2017},
   number={1},
   pages={1--47},
}

\bib{BeHaHuNi}{article}{
   author={Bechtold, Benjamin},
   author={Hausen, J\"{u}rgen},
   author={Huggenberger, Elaine},
   author={Nicolussi, Michele},
   title={On terminal Fano 3-folds with 2-torus action},
   journal={Int. Math. Res. Not. IMRN},
   date={2016},
   number={5},
   pages={1563--1602},
}

\bib{BoBo}{article}{
   author={Borisov, A. A.},
   author={Borisov, L. A.},
   title={Singular toric Fano three-folds},
   language={Russian},
   journal={Mat. Sb.},
   volume={183},
   date={1992},
   number={2},
   pages={134--141},
   issn={0368-8666},
   translation={
      journal={Russian Acad. Sci. Sb. Math.},
      volume={75},
      date={1993},
      number={1},
      pages={277--283},
   },
}

\bib{BrHae}{article}{
   author={Braun, Lukas},
   author={H\"{a}ttig, Daniel},
   title={Canonical threefold singularities with a torus action of
   complexity one and $k$-empty polytopes},
   journal={Rocky Mountain J. Math.},
   volume={50},
   date={2020},
   number={3},
   pages={881--939},
}

\bib{GRDB}{webpage}{
   author = {Brown, Gavin},
   author = {Kasprzyk, Alexander M.}
   title={The graded ring database},
   url={http://www.grdb.co.uk},
   date={2020}
 }

\bib{Cox}{article}{
   author={Cox, David A.},
   title={The homogeneous coordinate ring of a toric variety},
   journal={J. Algebraic Geom.},
   volume={4},
   date={1995},
   number={1},
   pages={17--50},
}

\bib{CoLiSc}{book}{
   author={Cox, David A.},
   author={Little, John B.},
   author={Schenck, Henry K.},
   title={Toric varieties},
   series={Graduate Studies in Mathematics},
   volume={124},
   publisher={American Mathematical Society, Providence, RI},
   date={2011},
   pages={xxiv+841},
 }

\bib{Dan}{article}{
   author={Danilov, V. I.},
   title={The geometry of toric varieties},
   language={Russian},
   journal={Uspekhi Mat. Nauk},
   volume={33},
   date={1978},
   number={2(200)},
   pages={85--134, 247},
 }

\bib{Dol}{article}{
   author={Dolgachev, Igor},
   title={Newton polyhedra and factorial rings},
   journal={J. Pure Appl. Algebra},
   volume={18},
   date={1980},
   number={3},
   pages={253--258},
   issn={0022-4049},
}
 
\bib{Ful}{book}{
   author={Fulton, William},
   title={Introduction to toric varieties},
   series={Annals of Mathematics Studies},
   volume={131},
   note={The William H. Roever Lectures in Geometry},
   publisher={Princeton University Press, Princeton, NJ},
   date={1993},
   pages={xii+157},
   isbn={0-691-00049-2},
 }

\bib{HaHiWr}{article}{
   author={Hausen, J\"{u}rgen},
   author={Hische, Christoff},
   author={Wrobel, Milena},
   title={On torus actions of higher complexity},
   journal={Forum Math. Sigma},
   volume={7},
   date={2019},
   pages={e38},
}

\bib{HiWr1}{article}{
    title={On the anticanonical complex},
    author={Christoff Hische and Milena Wrobel},
    year={2018},
    eprint={arXiv:1808.01997},
  }

\bib{HiWr2}{article}{
    title={On Fano threefolds with $\mathbb{C}^*$-action},
    author={Christoff Hische and Milena Wrobel},
    year={2019},
    eprint={arXiv:1912.08184},
}

 \bib{IF}{article}{
   author={Iano-Fletcher, A. R.},
   title={Working with weighted complete intersections},
   conference={
      title={Explicit birational geometry of 3-folds},
   },
   book={
      series={London Math. Soc. Lecture Note Ser.},
      volume={281},
      publisher={Cambridge Univ. Press, Cambridge},
   },
   date={2000},
   pages={101--173},
} 

\bib{Ish}{article}{
   author={Ishii, Shihoko},
   title={The minimal model theorem for divisors of toric varieties},
   journal={Tohoku Math. J. (2)},
   volume={51},
   date={1999},
   number={2},
   pages={213--226},
   issn={0040-8735},
}

\bib{IsPr}{article}{
   author={Iskovskikh, V. A.},
   author={Prokhorov, Yu. G.},
   title={Fano varieties},
   conference={
      title={Algebraic geometry, V},
   },
   book={
      series={Encyclopaedia Math. Sci.},
      volume={47},
      publisher={Springer, Berlin},
   },
   date={1999},
   pages={1--247},
}

\bib{Ka1}{article}{
   author={Kasprzyk, Alexander M.},
   title={Bounds on fake weighted projective space},
   journal={Kodai Math. J.},
   volume={32},
   date={2009},
   number={2},
   pages={197--208},
   issn={0386-5991},
}

\bib{Ka2}{article}{
   author={Kasprzyk, Alexander M.},
   title={Canonical toric Fano threefolds},
   journal={Canad. J. Math.},
   volume={62},
   date={2010},
   number={6},
   pages={1293--1309},
   issn={0008-414X},
}

\bib{Ka3}{article}{
   author={Kasprzyk, Alexander M.},
   title={Toric Fano three-folds with terminal singularities},
   journal={Tohoku Math. J. (2)},
   volume={58},
   date={2006},
   number={1},
   pages={101--121},
   issn={0040-8735},
}

\bib{Kh}{article}{
   author={Khovanski\u{\i}, A.G.},
   title={Newton polyhedra, and toroidal varieties},
   language={Russian},
   journal={Funkcional. Anal. i Prilo\v{z}en.},
   volume={11},
   date={1977},
   number={4},
   pages={56--64, 96},
   issn={0374-1990},
}

 
\bib{MLSt}{book}{
   author={Maclagan, Diane},
   author={Sturmfels, Bernd},
   title={Introduction to tropical geometry},
   series={Graduate Studies in Mathematics},
   volume={161},
   publisher={American Mathematical Society, Providence, RI},
   date={2015},
   pages={xii+363},
}

\bib{RaSr}{article}{
   author={Ravindra, G. V.},
   author={Srinivas, V.},
   title={The Grothendieck-Lefschetz theorem for normal projective
   varieties},
   journal={J. Algebraic Geom.},
   volume={15},
   date={2006},
   number={3},
   pages={563--590},
   issn={1056-3911},
}

\bib{Tev}{article}{
   author={Tevelev, Jenia},
   title={Compactifications of subvarieties of tori},
   journal={Amer. J. Math.},
   volume={129},
   date={2007},
   number={4},
   pages={1087--1104},
   issn={0002-9327},
 }

\bib{San}{article}{
   author={Sano, Taro},
   title={Deforming elephants of $\QQ$-Fano 3-folds},
   journal={J. Lond. Math. Soc. (2)},
   volume={95},
   date={2017},
   number={1},
   pages={23--51},
   issn={0024-6107},
}

\end{biblist}
\end{bibdiv}

\end{document}